\documentclass[a4paper,11pt]{amsart}

\usepackage{latexsym}
\usepackage{a4wide}
\usepackage{amscd}
\usepackage{graphics}
\usepackage{amsmath}
\usepackage{amsthm}
\usepackage{amssymb}
\usepackage{bm}
\usepackage{mathrsfs}
\usepackage{enumerate}
\usepackage{hyperref}
\usepackage{pgf,tikz}
\usepackage{dsfont}
\usepackage{soul}

\usepackage[utf8]{inputenc}
\usepackage{lmodern}

\numberwithin{equation}{section}

\newtheorem{theorem}{Theorem}[section]
\newtheorem{corollary}[theorem]{Corollary}
\newtheorem{lemma}[theorem]{Lemma}
\newtheorem{proposition}[theorem]{Proposition}
\theoremstyle{definition}
\newtheorem{definition}[theorem]{Definition}

\newtheorem{remark}[theorem]{Remark}

\newcommand{\R}{\mathbb{R}}	
\newcommand{\N}{\mathbb{N}} 

\renewcommand{\phi}{\varphi}
\newcommand{\dx}{\,\mathrm{d}x}	
\newcommand{\dxdy}{\,\mathrm{d}x \mathrm{d}y}
\newcommand{\dsi}{\,\mathrm{d}\sigma}	
\newcommand{\ds}{\,\mathrm{d}S}	
\renewcommand{\d}{\,\mathrm{d}}
\newcommand{\nnu}{\bm{\nu}}	
\newcommand{\norm}[1]{\left\lVert #1 \right\lVert}
\newcommand{\abs}[1]{| #1 |}

\newcommand{\sub}{\subseteq}

\DeclareMathOperator{\dive}{\mathrm{div}}

\newenvironment{bvp}{\left\{\begin{aligned}  }{\end{aligned}\right.}

\title{On a weighted two-phase boundary obstacle problem}

\author{Donatella Danielli}
\address{School of Mathematical and Statistical Sciences, Arizona State University, Tempe, AZ 85281, USA}
\email{DDanielli@asu.edu}

\author{Roberto Ognibene}
\address{Dipartimento di Ingegneria meccanica, energetica, gestionale e dei trasporti,
	Università degli studi di Genova, 16145 Genova, Italy}
\email{roberto.ognibene@edu.unige.it}

\begin{document}

\begin{abstract}
	In this work we consider an inhomogeneous two-phase obstacle-type problem driven by the fractional Laplacian. In particular, making use of the Caffarelli-Silvestre extension, Almgren and Monneau type monotonicity formulas and blow-up analysis, we provide a classification of the possible vanishing orders, which implies the strong unique continuation property. Moreover, we prove a stratification result for the nodal set, together with estimates on its Hausdorff dimensions, for both the regular and the singular part. The main tools come from geometric measure theory and amount to Whitney's Extension Theorem and Federer's Reduction Principle.
\end{abstract}

\maketitle

\noindent {\bf Keywords.} Obstacle problem; fractional Laplacian; monotonicity formulas; stratification of the free boundary.

\medskip

\noindent{\bf MSC classification.} 35R35 ; 
35B44; 
35B40; 
35R11; 

\section{Introduction}

In the present paper we investigate local properties of solutions of a stationary two-phase fractional problem, with particular emphasis on regularity aspects and on the structure of the nodal set. Our investigation focuses on the study of weak solutions of the following boundary value problem
\begin{equation}\label{eq:strong_form_1}
	\begin{bvp}
		-\dive(y^a\nabla u)&=0, &&\text{in } {B_1^+}, \\
		-\lim_{y\to 0^+}y^a\frac{\partial u}{\partial y}&=\lambda_-((u-h)^-)^{p-1}-\lambda_+((u-h)^+)^{p-1},&& \text{on } {B_1'}.
	\end{bvp}
\end{equation}
Here $a\in(-1,1)$, $t^+:=\max\{t,0\}$ and $t^-:=\max\{-t,0\}$ denote the positive and negative parts of $t$, $\lambda_-,\lambda_+\geq 0$ are nonnegative constants, $p\geq 2$ and, for $r>0$,
\begin{equation*}
B_r^+:=\{z=(x,y)\in\R^{N+1}_+\colon \abs{z}<r \},\quad B_r':=\{x\in \R^N\colon \abs{x}<r \},
\end{equation*}
 with
\[
	\R^{N+1}_+:=\{(x,y)\colon x\in\R^N,~y>0 \}=\R^N\times(0,+\infty),
\]
for $N\geq 1$. We will refer to the function $h \colon {B_1'}\to\R$ as the   \emph{(thin) obstacle}. Thanks to the extension procedure established in \cite{Caffarelli2007}, one can see that solutions to \eqref{eq:strong_form_1} are closely related to solutions of an equation in $N$ dimensions, driven by the fractional Laplacian
$$
{(-\Delta)^s u(x_0):=C(N,s) \lim_{\varepsilon\to 0}\int_{\mathbb{R}^N\setminus B_\varepsilon'(x_0)}\frac{u(x_0)-u(x)}{|x_0-x|^{N+2s}}\ dx},
$$
where $s=(1-a)/2\in (0,1)$ and $C(N,s)>0$ is a constant, while
\[
	B_r'(x_0):=\{x\in\R^N\colon \abs{x-x_0}<r\}.	
\]
Loosely speaking, it is shown in \cite{Caffarelli2007} that, in a suitable sense,
 $$
 (-\Delta)^{s}u(x_0)=-\kappa(N,a) \lim_{y\to 0^+}y^a\frac{\partial u}{\partial y} (x_0,y),
 $$
 for a {certain (explicit)} constant $\kappa(N,a)>0$. Therefore, one can think of the trace on $\{y=0\}$ of  $u$ in \eqref{eq:strong_form_1} as a solution to
\begin{equation}\label{eq:strong_fractional}
	(-\Delta)^s u=\lambda_-((u-h)^-)^{p-1}-\lambda_+((u-h)^+)^{p-1}.
\end{equation}
 Among many others, we refer for instance to \cite{Dinezza2012} and \cite{MR3916700} for a more thorough description of the operator $(-\Delta)^s$, the fractional setting, and the extension procedure (see also \cite{Abatangelo20191,Bucur2016,Chen2020}).  Thanks to this  connection, the problem we study can be read in two different ways. On the one hand, it can be seen as a two-phase penalized obstacle problem for the fractional Laplacian on $\R^N$, as in \eqref{eq:strong_fractional}. On the other hand, in its extended formulation \eqref{eq:strong_form_1}, one can understand it as a (weighted) two-phase boundary penalized obstacle problem associated to the operator $-\dive(y^a\nabla (\cdot))$.

Although we will  keep in mind the parallel just outlined, in this work we will focus on the study of the extended problem \eqref{eq:strong_form_1}. More precisely, our main objectives are to investigate the regularity of the solution and the structure of
the \emph{free boundary}
\[
\partial\{ u(\cdot,0)\neq 0 \}\cap B_1'.
\]
We now observe a couple of limit situations that may emerge from \eqref{eq:strong_form_1}. If we consider both $\lambda_-=\lambda_+=0$, then the boundary condition in \eqref{eq:strong_form_1} becomes of homogeneous Neumann type, while if we let $\lambda_+\to +\infty$ and $\lambda_-=0$, then the condition becomes
	\[
	(u-h)^+y^a\frac{\partial u}{\partial y}=0\quad\text{on }B_1'.
	\]
	In the case $s=1/2$, this is known as the \emph{thin obstacle problem} (or even \emph{Signorini problem}, see \cite{Signorini1959}).
	
Boundary value problems such as \eqref{eq:strong_form_1} find many applications as physical models. We briefly describe an example. Let $\Omega\sub\R^{N+1}$ be a continuum, conductive body, and let us consider the problem of regulating the temperature of its boundary $\partial\Omega$. In particular, let us assume we have a reference temperature $h\colon \partial\Omega\to \R$, and we want the temperature $u(z,t)$, for $(z,t)\in\partial\Omega\times(0,+\infty)$, to stay as close as possible to $h(z)$ at any time $t>0$. The mathematical formulation of this phenomenon is given by the following boundary value problem
\[
	\begin{bvp}
		\frac{\partial u}{\partial t}(z,t)-\Delta_z u(z,t) &=0, &&\text{for }(z,t)\in\Omega\times(0,+\infty), \\
		\frac{\partial u}{\partial\nnu}(z,t)&=\Phi(u(z,t)), &&\text{for }(z,t)\in\partial\Omega\times (0,+\infty),
	\end{bvp}
\]
where $\nnu$ denotes the normal vector of $\partial\Omega$ and $\Phi$ should reflect the fact that, if $u(z,t)$ is below the threshold $h(z)$ then the temperature must be raised, while if $u(z,t)$ is greater than $h(z)$, it must be lowered. Moreover, the adjustment should be proportionally related to the difference $\abs{u(z,t)-h(z)}$. Therefore a possible choice is
\begin{align*}
	\Phi(u(z,t))&=\lambda_- ((u(z,t)-h(z))^-)^{p-1}-\lambda_+ ((u(z,t)-h(z))^+)^{p-1} \\
	&=\begin{cases}
		\lambda_- (h(z)-u(z,t))^{p-1}, &\text{if }u(z,t)<h(z),\\
		0, &\text{if }u(z,t)=h(z), \\
		-\lambda_+ (u(z,t)-h(z))^{p-1}, &\text{if }u(z,t)>h(z).
	\end{cases}
\end{align*}
Formally, if $u(z,t)$ is converging to some asymptotic configuration $u(z)$, as $t\to +\infty$, then it must satisfy
\[
\begin{bvp}
	-\Delta u &=0, &&\text{in }\Omega, \\
	\frac{\partial u}{\partial\nnu}&=\Phi(u), &&\text{on }\partial\Omega.
\end{bvp}
\]
This corresponds to \eqref{eq:strong_form_1} in the case $a=0$ and when a portion of $\partial\Omega$ lies on the thin space $\R^N$. In view of this physical interpretation, problem \eqref{eq:strong_form_1} can be said to possess a \emph{stable} nature. We refer to \cite[Section 2]{Danielli2021} and references therein for more details concerning physical motivations of the problem under investigation (see also \cite{Bucur2016} for the fractional framework).

Free boundary problems associated to the fractional Laplace operator have been the object of intensive investigation in  recent years. We refer the interested reader to \cite{Danielli2018}, \cite{PSU2012} for thorough monographs on the subject. For instance, in \cite{Allen2012a}, \cite{Allen2012b} the authors studied the so-called lower dimensional two-phase membrane problem, while in \cite{Allen2015}, \cite{Allen2019} and \cite{Danielli2021} the fractional version of a penalized obstacle problem is considered. A fairly complete picture for the regularity of the free boundary in the ``classical'' lower dimensional obstacle problem is given in \cite{AC2004}, \cite{ACS2008}, \cite{Caffarelli2008}, together with \cite{Garofalo2009}, \cite{GRO2019}.  In \cite{Sire2020} the authors investigated geometric theoretical features of the nodal set of $s$-harmonic functions. We finally quote \cite{Soave2018}, \cite{Soave2019}, where the authors developed new techniques in order to study regularity properties of zero level sets of solutions to local equations with sublinear (and even singular) powers; \cite{Tortone2020} extends some of the results to the nonlocal framework.

 In order to state our main results, we first introduce the appropriate functional setting. For any positive half ball $B_r^+$, we define the weighted Sobolev space $H^{1,a}(B_r^+)=H^1(B_r^+;y^a\dxdy)$ as the completion of $C^\infty(\overline{B_r^+})$ with respect to the norm
\[
	\norm{u}_{H^{1,a}(B_r^+)}:=\left(\int_{B_r^+}y^a (\abs{\nabla u}^2+u^2)\dxdy \right)^{1/2}.
\]
Moreover, we denote by $H^{1,a}_{0,S_r^+}(B_r^+)$ the closure of $C_c^\infty(\overline{B_r^+}\setminus {\overline{S_r^+}})$ in $H^{1,a}(B_r^+)$, where $S_r^+:=\partial B_r^+{\cap\R^{N+1}_+}$.
 For a fixed function $g\in H^{1,a}(B_1^+)\cap C(\overline{B_1^+})$, we consider the problem
\begin{equation}\label{eq:strong_form_2}
\begin{bvp}
-\dive(y^a\nabla u)&=0, &&\text{in }B_1^+, \\
u&=g, &&\text{on }S_1^+, \\
-\lim_{y\to 0^+}y^a\frac{\partial u}{\partial y}&=\lambda_-(u^-)^{p-1}-\lambda_+(u^+)^{p-1}, &&\text{on }B_1'.
\end{bvp}
\end{equation}
In particular, we say that $u\in H^{1,a}(B_1^+)\cap L^p(B_1')$ is a \emph{solution} of the problem above if $u-g\in H^{1,a}_{0,S_1^+}(B_1^+)$ and
\begin{equation*}
\int_{B_1^+}y^a\nabla u\cdot\nabla\phi\dxdy=\int_{B_1'}(\lambda_-(u^-)^{p-1}-\lambda_+(u^+)^{p-1})\phi\dx,
\end{equation*}
for all $\phi\in H^{1,a}_{0,S_1^+}(B_1^+)$.  The Dirichlet boundary datum on $S_1^+$ in \eqref{eq:strong_form_2} appears (in contrast to \eqref{eq:strong_form_1})  in order to establish existence and uniqueness of a weak solution, see Proposition \ref{prop:exist_unique}. Also, we explicitly note that, for the sake of simplicity, we have taken the thin obstacle $h\equiv 0$.

We carry out our analysis  in the following steps. First of all, we establish the optimal regularity of the solution $u$ in H\"older spaces, see Lemma \ref{lemma:regularity}.  We point out that there is a substantial difference in this respect between the cases $a=0$ and $a\neq 0$. While in the former the optimal H\"older space depends on the exponent $p$, this is no longer true in the latter case.  This is a clear indication of the strong influence of the weight $y^a$, which can be degenerate or singular when $y\to 0^+$, and  affects the behavior of the solution near the thin space $\partial\R^{N+1}_+$. Nevertheless, this weight belongs to the second Muckenhoupt class $A_2$ and therefore it enjoys nice properties, including e.g. Sobolev and trace embeddings, see for instance \cite{Fabes1982}, \cite{Murthy1968} and also Lemma \ref{lemma:trace_ineq}. At this point, we investigate the local behavior of $u(x,y)$ when $y\to 0^+$.
In order to do so, we establish  Almgren-  and  Monneau-type monotonicity formulas that, together with a blow up analysis for a proper rescaling of $u$, provide the asymptotic rate and shape of the solution near a free boundary point, in terms of $a$-harmonic polynomials (see Definition \ref{def:subharm_harm}), which are even in the variable $y$ and homogeneous of some degree. More precisely, for any integer $k\geq 0$, let $\mathds{P}_k^a$ denote the space of polynomials $p\colon\R^{N+1}\to\R$ such that
\begin{gather}
	p(x,-y)=p(x,y)\quad\text{for all }(x,y)\in\R^{N+1},\label{eq:polynom1}\\
	p(\mu z)=\mu^kp(z)\quad\text{for all }z\in\R^{N+1}~\text{and all }\mu\geq 0,\label{eq:polynom2}\\
	-\dive(\abs{y}^a\nabla p)=0\quad\text{in }\R^{N+1}.\label{eq:polynom3}
\end{gather}
One can immediately notice that, given the other conditions, the first one is equivalent to
\[
	\lim_{y\to 0^+}\abs{y}^a\frac{\partial p}{\partial y}=0\quad\text{on }\R^N\times\{0\}.
\]
Also, thanks to \cite[Theorem 1.1]{Sire2019}, if the conditions above are satisfied, no other values of $k$ apart from integer ones are allowed.

We are now able to state our first main result.
\begin{theorem}\label{thm:blow_up_homo}
	Let $u\in H^{1,a}(B_1^+)$ be the unique solution to \eqref{eq:strong_form_2} and let $x_0\in B_1'$. If
\[
		u_{r}^{x_0}(z):=u(rz+x_0),
	\]
then there exists an integer $k\geq 0$ and $p_k^{x_0}\in\mathds{P}_k^a$ such that
	\begin{gather*}
		r^{-k}u^{x_0}_{r}\to p_k^{x_0}\quad \text{in }H^{1,a}(B_1^+)~\text{and }C^{0,\alpha}(\overline{B_1^+}), \\
		r^{-k}u^{x_0}_{r}\to  p_k^{x_0} \quad\text{in }C^{1,\alpha}(B_1'), 
	\end{gather*}
	as $r\to 0^+$, for some $\alpha\in(0,1)$.
	
\end{theorem}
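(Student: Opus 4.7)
The plan is to follow the classical Almgren–Monneau blow-up scheme, adapted to the weighted degenerate/singular setting. Fix the free boundary point $x_0\in B_1'$ and, to simplify notation, assume $x_0=0$. The first step is to introduce the Almgren frequency function
\[
    N(r) = \frac{r\int_{B_r^+}y^a|\nabla u|^2\dxdy}{\int_{S_r^+}y^a u^2\dsi},
\]
and to prove (this has been established earlier in the paper) that it admits a finite limit $\gamma:=N(0^+)\geq 0$ as $r\to 0^+$. The nonlinear boundary term contributes a perturbation of order $r^{1+a+k(p-2)}$ under the natural rescaling (see below), and since $a>-1$ and $p\geq 2$, this error is summable in $r$, so $r\mapsto e^{Cr^\beta}N(r)$ is monotone nondecreasing for some $\beta>0$ and the limit exists.

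Next I would set up the blow-up. Define $H(r):=r^{-N-a}\int_{S_r^+}y^a u^2\,d\sigma$ and, using the identity $(H'/H)=2N/r+$(lower order), deduce the doubling-type bound $H(r)\leq C r^{2\gamma}$ for small $r$. Then consider the normalized rescalings
\[
    v_r(z):=\frac{u(rz)}{\sqrt{H(r)}},\qquad z\in B_{1/r}^+,
\]
which have $\|v_r\|_{L^2(S_1^+;y^a)}=1$ by construction, and whose Dirichlet energy on $B_1^+$ is controlled by $N(r)\to\gamma$. After a straightforward scaling computation, $v_r$ solves the rescaled problem whose boundary right-hand side carries a factor $r^{1+a}H(r)^{(p-2)/2}\to 0$. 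Therefore, up to a subsequence, $v_r\rightharpoonup v_*$ weakly in $H^{1,a}(B_1^+)$; by compactness of the trace embedding on $S_1^+$ and on $B_1'$ (here I would invoke the $A_2$-Muckenhoupt theory as in Lemma~\ref{lemma:trace_ineq}), the convergence is strong in $L^2(S_1^+;y^a)$, so $\|v_*\|_{L^2(S_1^+;y^a)}=1$ and in particular $v_*\not\equiv 0$. Passing to the limit in the weak formulation, $v_*$ is $a$-harmonic in $B_1^+$ with vanishing weighted Neumann trace on $B_1'$, and the Almgren frequency of $v_*$ is the constant $\gamma$. By the Euler identity associated to $N$, this forces $v_*$ to be homogeneous of degree $\gamma$.

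The identification step comes next. Extending $v_*$ evenly across $\{y=0\}$ yields a global solution of $-\dive(|y|^a\nabla v_*)=0$ on $\R^{N+1}$ that is homogeneous of degree $\gamma$ and even in $y$. By \cite[Theorem 1.1]{Sire2019}, such a nontrivial homogeneous solution can exist only when $\gamma$ is a nonnegative integer $k$, and then $v_*\in\mathds{P}_k^a$. This upgrades $\gamma$ to an integer and, together with the Monneau-type monotonicity formula (also established earlier), yields uniqueness of the blow-up limit: the quantity
\[
    \Phi_{q}(r):=r^{-N-a-2k}\int_{S_r^+}y^a(u-q)^2\dsi
\]
is monotone (up to lower-order corrections) for every $q\in\mathds{P}_k^a$, and the standard argument—combining monotonicity with the fact that any two subsequential limits share the same $L^2$-norm on $S_1^+$—yields independence of the blow-up on the subsequence. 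Hence a unique $p_k^{x_0}\in\mathds{P}_k^a$ exists with $r^{-k}u_r^{x_0}\to p_k^{x_0}$ in $H^{1,a}(B_1^+)$.

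Finally, to upgrade the convergence to $C^{0,\alpha}(\overline{B_1^+})$ and $C^{1,\alpha}(B_1')$, I would combine the $H^{1,a}$ convergence with the optimal Hölder estimates of Lemma~\ref{lemma:regularity} applied to the family $\{r^{-k}u_r^{x_0}\}$. Since the rescaled solutions solve the same type of boundary value problem with right-hand side vanishing as $r\to 0^+$ (by the scaling computation above), the regularity estimates are uniform in $r$, producing relative compactness in $C^{0,\alpha}(\overline{B_1^+})$ and in $C^{1,\alpha}(B_1')$; the limit must then coincide with $p_k^{x_0}$ by uniqueness. The delicate point in the whole argument is proving that the Almgren frequency is genuinely monotone and has a finite limit in the presence of the sublinear-to-superlinear nonlinear boundary term; once this is granted, the rest follows the pattern of \cite{Garofalo2009,GRO2019,Danielli2021} adapted to the weighted setting.
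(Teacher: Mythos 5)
Your proposal follows the same Almgren--Monneau scheme as the paper (perturbed frequency monotonicity, blow-up of the $\sqrt{H(r)}$-normalized rescalings, identification of the blow-up limit as an integer-degree $a$-harmonic polynomial via \cite{Sire2019}, uniqueness through Monneau), and the overall skeleton is correct. There is, however, a genuine gap at the point where you pass from convergence of $v_r = u(r\cdot)/\sqrt{H(r)}$ to convergence of $r^{-k}u_r^{x_0}$. You establish the upper bound $H(r)\leq C r^{2k}$, and the Monneau monotonicity at $q=0$ gives existence of $\gamma := \lim_{r\to 0^+} r^{-2k}H(r)\in[0,\infty)$; but you never show $\gamma > 0$. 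This is precisely the nondegeneracy statement (Proposition~\ref{prop:nondegeneracy} in the paper), and without it the limit of $r^{-k}u_r^{x_0}$ could a priori be the zero polynomial, rendering the theorem vacuous and making the unique-continuation corollary fail. The paper's proof of nondegeneracy is not an incidental lemma: it is a separate contradiction argument that itself uses the Monneau monotonicity together with homogeneity and a unique-continuation principle for $A_2$-weighted equations, and your phrase ``combining monotonicity with the fact that any two subsequential limits share the same $L^2$-norm on $S_1^+$'' glosses over exactly this step (the subsequential limits of $v_r$ share norm $1$, but the subsequential limits of $r^{-k}u_r^{x_0}$ only share a norm once $\gamma$ is known to exist and be positive).

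A smaller point: in your scaling computation the power of $r$ multiplying the nonlinear boundary term should be $r^{1-a}$, not $r^{1+a}$. Writing $y'=ry$, one has $y^a\partial_y[u(rx,ry)] = r^{1-a}(y')^a(\partial_y u)(rx,y')$, so the rescaled Neumann condition acquires the factor $r^{1-a}H(r)^{(p-2)/2}$. Since $a\in(-1,1)$ both exponents are positive, so your subsequent estimates still go through, but the exponent as written would be wrong for $a>0$ (it would overstate the decay) and should be corrected.
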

Since the polynomial $p_k^{x_0}$ (which we refer to as \emph{blow-up limit} of $u$ at $x_0$) cannot vanish everywhere on the thin space $\R^N\times\{0\}$ (see Remark \ref{rmk:homo_blow_up}), the previous theorem readily implies the boundary strong unique continuation principle.
\begin{corollary}[Strong unique continuation]
	Let $u\in H^{1,a}(B_1^+)$ be the unique solution of \eqref{eq:strong_form_2} and assume that $u(z)=o(\abs{z}^n)$ as $\abs{z}\to 0$, $z\in \overline{B_1^+}$ for all $n\in \N$. Then $u\equiv 0$ on $B_1^+$.
\end{corollary}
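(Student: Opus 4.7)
The plan is to apply Theorem \ref{thm:blow_up_homo} at the point $x_0 = 0$ and use the infinite-order vanishing hypothesis to force the blow-up limit to be the zero polynomial. The theorem furnishes an integer $k \geq 0$ and some $p_k \in \mathds{P}_k^a$ with $r^{-k}u(r\cdot) \to p_k$ uniformly on $\overline{B_1^+}$. For every fixed $z \in \overline{B_1^+}\setminus\{0\}$, the hypothesis $u(w) = o(\abs{w}^n)$ (taking $n=k$ and $w = rz$) yields
\[
r^{-k}\abs{u(rz)} = \abs{z}^k\,\frac{\abs{u(rz)}}{\abs{rz}^k} \longrightarrow 0 \quad\text{as } r \to 0^+,
\]
and, together with the uniform convergence, this forces $p_k \equiv 0$ on $\overline{B_1^+}$.

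Next, I would invoke Remark \ref{rmk:homo_blow_up} (paraphrased in the discussion preceding the corollary), which asserts that the blow-up $p_k$ cannot vanish identically on the thin space $\R^N\times\{0\}$ unless the solution itself is trivial on a neighborhood of $0$ in $\overline{B_1^+}$. Since we have just shown $p_k \equiv 0$, this forces $u \equiv 0$ on some half-ball $B_\rho^+$ with $\rho > 0$.

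Finally, the equation $-\dive(y^a\nabla u)=0$ in $B_1^+$ is uniformly elliptic with smooth coefficients on every compact subset of $B_1^+\cap\{y>0\}$, so classical interior strong unique continuation (indeed, the real-analyticity of solutions away from the thin space) propagates $u \equiv 0$ from the nonempty open set $B_\rho^+\cap\{y>0\}$ to the connected set $B_1^+$, yielding the claim.

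The delicate step is the passage from $p_k \equiv 0$ to $u \equiv 0$ near the origin: one must confirm that the Almgren-type monotonicity underpinning Theorem \ref{thm:blow_up_homo} rules out a degenerate blow-up whenever the solution is nontrivial near $0$. A self-contained alternative argues directly with the Almgren frequency $N(u,r)$ produced in the proof of the theorem: it is nondecreasing with finite limit $N(0^+)=k$, whereas the infinite-order vanishing of $u$ at $0$ would force $N(0^+)=+\infty$, a contradiction unless the height function $H(u,r)$ vanishes identically for small $r$, in which case $u$ already vanishes on a half-ball and one concludes as above.
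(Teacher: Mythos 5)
Your proposal is correct and takes essentially the same route the paper has in mind: apply Theorem \ref{thm:blow_up_homo} at the origin, use the infinite-order vanishing to force the blow-up polynomial to be identically zero, and contradict the nontriviality of the blow-up (Remark \ref{rmk:homo_blow_up}). Two small expository notes: Remark \ref{rmk:homo_blow_up} has no ``unless'' clause — it asserts unconditionally that the blow-up limit $u_0=\sqrt{\gamma}\,\tilde{u}$ (with $\|\tilde u\|_{L^{2,a}(S_1^+)}=1$ by the normalization in Theorem \ref{thm:blow_up}) cannot vanish on the thin space; the conditional flavor you add is really the observation that the whole blow-up machinery (positivity of $H$, finiteness of the frequency, Theorem \ref{thm:blow_up}) only runs when $u\not\equiv 0$, so the cleanest formulation is the contrapositive: assume $u\not\equiv 0$, obtain $p_k\not\equiv 0$, contradict. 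Your detour through ``$u\equiv 0$ on a half-ball, then propagate by interior unique continuation'' is thereby unnecessary, though harmless, and your alternative frequency-based argument in the last paragraph is the tidier way to phrase the underlying mechanism.
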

This result tells us that the nodal set of $u$ on the thin space
\begin{equation*}
	\mathcal{Z}(u):={\partial}\{x\in B_1'\colon u(x,0)=0 \}
\end{equation*}
has empty interior in the $\R^N$ topology. Therefore, thanks to the continuity of $u$, it coincides with the free boundary that separates the two phases of the solution
\begin{align*}
	\mathcal{Z}(u)&=\partial\{ u(\cdot,0)\not\equiv 0\} \\
	&=\partial\{ u(\cdot,0)>0 \}\cup\partial\{u(\cdot,0)<0  \}.
\end{align*}
The last part of the present work is devoted to the analysis of the regularity and structural properties of the nodal set $\mathcal{Z}(u)$. We first remark that, when investigating the properties of the free boundary in low dimensions ($N=1,2$) some extra care is needed. Therefore, in order to avoid technicalities and to preserve the main idea, we hereafter assume $N\geq 2$ or $N\geq 3$ (we specify the choice in each statement). A fundamental tool is the notion of frequency. More precisely, we introduce, for any $v\in H^{1,a}(B_1^+)$, $x_0\in B_1'$ and $r>0$
\begin{equation*}
	\mathcal{N}_{x_0}(v,r):=\frac{\displaystyle r\int_{B_r^+(x_0)}y^a\abs{\nabla v}^2\dxdy }{\displaystyle \int_{S_r^+(x_0)}y^a v^2\ds},
\end{equation*}
where
\[
	B_r^+(x_0):=(x_0,0)+B_r^+\quad\text{and}\quad S_r^+(x_0):=(x_0,0)+S_r^+.
\]
We call $\mathcal{N}_{x_0}(v,r)$, as a function of $r$, the \emph{frequency function} of $v$ at $x_0$. In Proposition \ref{prop:lim_N} we prove, if $u$ the solution of \eqref{eq:strong_form_2}, the existence of the limit
\[
	\mathcal{N}_{x_0}(u,0^+):=\lim_{r\to 0^+}\mathcal{N}_{x_0}(u,r).
\]
In addition, we show in Theorem \ref{thm:blow_up} that $\mathcal{N}_{x_0}(u,0^+)$   is a nonnegative integer, and it is the degree of the blow-up limit of $u$ at $x_0$. We refer to $\mathcal{N}_{x_0}(u,0^+)$ as the \emph{frequency} of $u$ at $x_0$. To proceed, we split the nodal set $\mathcal{Z}(u)$ into two disjoint parts
\begin{align*}
	\mathcal{R}(u):&=\mathcal{Z}_1(u), \\
	\mathcal{S}(u):&=\bigcup_{k\geq 2}\mathcal{Z}_k(u),
\end{align*}
where, for any integer $k\geq 1$,
\begin{equation*}
	\mathcal{Z}_k(u):=\{x_0\in \mathcal{Z}(u)\colon \mathcal{N}_{x_0}(u,0^+)=k \}.
\end{equation*}
We call $\mathcal{R}(u)$ and $\mathcal{S}(u)$ the \emph{regular} and \emph{singular} part of the free boundary, respectively. At this point of our analysis, we are confronted again with the strong influence of the weight $y^a$ on the behavior of the solution $u$ near the thin space. Indeed, we perform another classification of the zero points of $u$, based on whether the blow-up limit of the solution at $x_0\in\mathcal{Z}(u)$ depends on the variable $y$ or not. Namely, let us consider the following two spaces of polynomials
\begin{equation*}
	\mathds{P}_k^*:=\{p\in\mathds{P}_k^a\colon p(x,y)=p(x)~\text{for all }(x,y)\in\R^{N+1} \}\quad\text{and}\quad\mathds{P}_k^y:=\mathds{P}_k\setminus\mathds{P}_k^*.
\end{equation*}
It is readily seen that the former can be characterized as
\begin{equation*}
	\mathds{P}_k^*=\{ p\in\mathds{P}_k^a\colon \Delta_x p=0~\text{in }\R^{N+1} \}.
\end{equation*}
Moreover we observe that $\mathds{P}_1^y=\emptyset$, and this leads to our first result about the free boundary, concerning its regular part.
\begin{proposition}\label{prop:reg_part}
	Let $N\geq 2$. The regular part of the free boundary $\mathcal{R}(u)$ is a $C^{1,\alpha}$-hypersurface of $B_1'$ and
	\[
	\mathcal{R}(u)=\{x_0\in \mathcal{Z}(u)\colon \abs{\nabla_x u(x_0,0)}\neq 0 \}.
	\]
\end{proposition}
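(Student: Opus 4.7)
The plan is to first establish the claimed set equality and then deduce the $C^{1,\alpha}$ structure from the implicit function theorem applied to the $C^{1,\alpha}$ trace $u(\cdot, 0)$.

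For the characterization, I would argue both inclusions. For the inclusion $\mathcal{R}(u) \subseteq \{|\nabla_x u(x_0,0)| \neq 0\}$, take $x_0 \in \mathcal{Z}_1(u)$. Theorem \ref{thm:blow_up_homo} gives $r^{-1} u^{x_0}_r \to p_1^{x_0}$ in $C^{1,\alpha}(B_1')$ with $p_1^{x_0} \in \mathds{P}_1^a$ nonzero. Since $\mathds{P}_1^y = \emptyset$, the limit polynomial is of the form $p_1^{x_0}(x,y) = L \cdot x$ with $L \in \R^N \setminus \{0\}$. Because $\nabla_x(r^{-1} u^{x_0}_r)(0) = \nabla_x u(x_0, 0)$, the $C^{1,\alpha}(B_1')$-convergence yields $\nabla_x u(x_0, 0) = L \neq 0$. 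For the reverse inclusion, let $x_0 \in \mathcal{Z}(u)$ with $\nabla_x u(x_0, 0) \neq 0$, and let $k = \mathcal{N}_{x_0}(u, 0^+)$ be the frequency. If $k = 0$, the blow-up $p_0^{x_0}$ is a nonzero constant, which contradicts $u(x_0, 0) = 0$ and the uniform convergence on the thin ball. If $k \geq 2$, then $u^{x_0}_r = O(r^k)$ on compact subsets of $B_1'$, forcing $\nabla_x u(x_0, 0) \cdot x = \lim_{r\to 0^+} r^{-1} u(x_0 + rx, 0) = 0$ for every $x$, contradicting $\nabla_x u(x_0, 0) \neq 0$. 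Hence $k = 1$ and $x_0 \in \mathcal{R}(u)$.

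Once the characterization is established, the regularity is immediate: by Lemma \ref{lemma:regularity}, the trace $u(\cdot, 0)$ belongs to $C^{1,\alpha}(B_1')$ (here is where $N \geq 2$ is used so that the $C^{1,\alpha}$ regularity up to the thin space is meaningful for defining a hypersurface). At any $x_0 \in \mathcal{R}(u)$, the continuity of $\nabla_x u(\cdot, 0)$ ensures $\nabla_x u(\cdot, 0) \neq 0$ throughout a neighborhood $U$ of $x_0$ in $B_1'$. Applying the implicit function theorem to $u(\cdot, 0)$ on $U$, the level set $\{u(\cdot, 0) = 0\} \cap U$ is a $C^{1,\alpha}$-hypersurface of $B_1'$; since $\mathcal{R}(u) \cap U$ coincides with this level set in $U$ (again because $x_0$ lies in the interior of the nodal set characterization once the gradient is nonzero), we conclude that $\mathcal{R}(u)$ is locally, and hence globally, a $C^{1,\alpha}$-hypersurface.

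The main potential obstacle is the reverse inclusion, specifically ruling out higher-order frequencies under the hypothesis $\nabla_x u(x_0, 0) \neq 0$. This uses crucially both the $C^{1,\alpha}$ regularity of the trace from Lemma \ref{lemma:regularity} (so that classical first-order Taylor expansion is available and matches the blow-up at $y = 0$) and the fact that the blow-up limits given by Theorem \ref{thm:blow_up_homo} are homogeneous polynomials of integer degree; the auxiliary facts $\mathds{P}_1^y = \emptyset$ and the non-vanishing of the blow-up on the thin space (Remark \ref{rmk:homo_blow_up}) are what make everything at the regular points rigid enough to read off the gradient.
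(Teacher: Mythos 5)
Your proof is correct and takes essentially the same route as the paper: establish the set-theoretic characterization via the blow-up at degree-one nodal points (using $\mathds{P}_1^y=\emptyset$) and the $C^{1,\alpha}(B_1')$ convergence of rescalings, then invoke the implicit function theorem on the $C^{1,\alpha}$ trace. You spell out the reverse inclusion (ruling out $k\geq 2$ via a Taylor/growth comparison) more explicitly than the paper, which dispatches it as immediate from Theorem \ref{thm:blow_up_homo}, and you bypass the paper's appeal to Theorem \ref{thm:cont_blow_up} for continuity of $x_0\mapsto\nnu_{x_0}$ by using instead the continuity of $\nabla_x u(\cdot,0)$ from Lemma \ref{lemma:regularity}—a slightly more economical route to the same fact.
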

On the other hand, since $\mathds{P}_k^y\neq \emptyset$ for $k\geq 2$, we cannot expect the same regularity for the singular set. However, we are able to prove a stratification result which describes the structure of $\mathcal{S}(u)$. Once more, we must distinguish between points for which the blow-up limit depends on the variable $y$, and points for which this does not happen. If we define, for $p_k^{x_0}$  as in Theorem \ref{thm:blow_up_homo},
\begin{align*}
\mathcal{Z}_k^*(u):&=\{x_0\in\mathcal{Z}_k(u)\colon p_k^{x_0}\in\mathds{P}_k^*  \}, \\
\mathcal{Z}_k^y(u):&=\{x_0\in\mathcal{Z}_k(u)\colon p_k^{x_0}\in\mathds{P}_k^y  \}=\mathcal{Z}_k(u)\setminus\mathcal{Z}_k^*(u),
\end{align*}
then  the singular set naturally splits into the following two disjoint parts
\[
\mathcal{S}^y(u):=\bigcup_{k\geq 2}\mathcal{Z}_k^y(u)\quad\text{and}\quad\mathcal{S}^*(u):=\bigcup_{k\geq 2}\mathcal{Z}_k^*(u).
\]
In order to  prove their stratified structure, we need the notion of dimension of the singular set at one of its points.
\begin{definition}
	For any $x_0\in\mathcal{Z}_k(u)$, we define the \emph{dimension} of $\mathcal{Z}_k(u)$ at $x_0$ as
	\[
	d^{x_0}_k:=\dim \left\{ \xi\in\R^N\colon \nabla_x p_k^{x_0}(x,0)\cdot \xi=0~\text{for all }x\in\R^N  \right\}.
	\]
\end{definition}
 We observe that, since $p_k^{x_0}\not\equiv 0$ on $\R^N\times\{0\}$, we have that $0\leq d_k^{x_0}\leq N-1$. Finally, let us define
\[
\mathcal{Z}_k^n(u):=\left\{ x_0\in\mathcal{Z}_k(u)\colon d_k^{x_0}=n  \right\}.
\]

We are now able to describe the structure of the singular set. Roughly speaking, we show that $\mathcal{Z}_k(u)$ is contained in a $d_k^{x_0}$-dimensional manifold near $x_0$.

\begin{theorem}\label{thm:stratification}
	Let $N\geq 3$. The set $\mathcal{S}^y(u)$ is contained in a countable union of $(N-1)$-dimensional $C^1$ manifolds, while $\mathcal{S}^*(u)$ is contained in a countable union of $(N-2)$-dimensional $C^1$ manifolds. Furthermore
	\[
	\mathcal{S}^y(u)=\bigcup_{n=0}^{N-1}\mathcal{S}_n^y(u)\quad\text{and}\quad\mathcal{S}^*(u)=\bigcup_{n=0}^{N-2}\mathcal{S}_n^*(u),
	\]
	where
	\[
	\mathcal{S}_n^y(u):=\bigcup_{k\geq 2}\mathcal{Z}_k^y(u)\cap\mathcal{Z}_k^n(u)\quad\text{and}\quad\mathcal{S}_n^*(u):=\bigcup_{k\geq 2}\mathcal{Z}_k^*(u)\cap\mathcal{Z}_k^n(u)
	\]
	and they are contained in a countable union of $n$-dimensional $C^1$ manifolds.
\end{theorem}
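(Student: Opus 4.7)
The plan is to follow the now-standard stratification scheme developed for the classical Signorini problem by Garofalo--Petrosyan, adapted to the weighted setting, combining a Monneau-type monotonicity formula with Whitney's extension theorem and Federer's reduction principle. First I would establish a Monneau-type monotonicity formula for the quantity
\[
M_{x_0,k}(u,r) := \frac{1}{r^{N+a+2k}}\int_{S_r^+(x_0)} y^a (u - P)^2 \, dS,
\]
where $P\in \mathds{P}_k^a$ and $x_0\in \mathcal{Z}_k(u)$. The Almgren formula already stated earlier in the paper pins the blow-up degree to the integer $k=\mathcal{N}_{x_0}(u,0^+)$; the Monneau formula promotes the subsequential convergence in Theorem \ref{thm:blow_up_homo} to full convergence, so that the blow-up profile $p_k^{x_0}$ is unique, and moreover controls the remainder $u(\cdot)-p_k^{x_0}(\cdot-x_0)$ in $L^2$ with an explicit decay.

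The next step is to deduce from $M_{x_0,k}$ both the continuous dependence of the map $x_0\mapsto p_k^{x_0}$ on compact subsets of $\mathcal{Z}_k(u)$ and a uniform Taylor expansion of the form
\[
u(x,0) = p_k^{x_0}(x-x_0,0) + o(|x-x_0|^k) \quad \text{as } x\to x_0,
\]
uniformly for $x_0$ in compact subsets of $\mathcal{Z}_k(u)$. Letting $a_\alpha(x_0)$ denote the coefficients of $p_k^{x_0}(\cdot - x_0,0)$ for multi-indices $|\alpha|\leq k$, these are continuous functions on $\mathcal{Z}_k(u)\cap K$ (for any compact $K\subset B_1'$) satisfying the Whitney compatibility conditions, the key identity being that the Taylor polynomial of $p_k^{x_0}$ of degree $k$ centered at a nearby $x_1\in\mathcal{Z}_k(u)$ agrees, up to order $|x_0-x_1|^k$, with $p_k^{x_1}$. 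Whitney's extension theorem then produces a function $F\in C^k(\R^N)$ with $D^\alpha F(x_0) = a_\alpha(x_0)$ for all $x_0\in \mathcal{Z}_k(u)\cap K$ and all $|\alpha|\leq k$.

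With $F$ in hand, one identifies $\mathcal{Z}_k^n(u)\cap K$ with a subset of the locus where $N-n$ appropriate derivatives of $F$ of order $k-1$ vanish and have linearly independent gradients; the implicit function theorem then exhibits, around each $x_0\in \mathcal{Z}_k^n(u)\cap K$, a neighborhood whose intersection with $\mathcal{Z}_k^n(u)$ lies in an $n$-dimensional $C^1$ manifold. Summing over $k\geq 2$, $n$, and a countable cover by compact sets yields the three claimed countable unions. The global bounds $n\leq N-1$ and $n\leq N-2$ follow from the definitions: for $x_0\in \mathcal{S}^y(u)$, $p_k^{x_0}\not\equiv 0$ on $\R^N\times\{0\}$ by Remark \ref{rmk:homo_blow_up}, forcing the translation-invariance kernel to be a proper subspace of $\R^N$; for $x_0\in \mathcal{S}^*(u)$, $p_k^{x_0}$ is a harmonic polynomial in $x$ alone of degree $k\geq 2$, and a polynomial depending genuinely on a single variable which is harmonic must be linear, ruling out $d_k^{x_0}=N-1$ and giving $d_k^{x_0}\leq N-2$.

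The main obstacle I anticipate is the derivation of the Monneau formula in the weighted setting: differentiating $M_{x_0,k}(u,r)$ in $r$ produces cross terms of the form $\int_{B_r'(x_0)}(\lambda_-(u^-)^{p-1}-\lambda_+(u^+)^{p-1})(u-P)\,\d x$, which must be shown to have the right sign (or at least to be controlled by a term absorbable into the monotonicity estimate) using the sublinear/superlinear structure of the boundary nonlinearity together with Almgren-based energy bounds. The subsequent verification of the Whitney compatibility conditions then amounts to a careful bootstrapping of the Monneau $L^2$-decay to pointwise Taylor control, using the regularity of Lemma \ref{lemma:regularity}; once these ingredients are in place, the rest of the argument is essentially a linear-algebraic translation of the symmetry structure of $p_k^{x_0}$ into manifold dimension via the implicit function theorem and Federer's reduction principle.
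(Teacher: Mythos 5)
Your proposal follows the same route as the paper: the Monneau-type monotonicity formula (already established in Corollary \ref{cor:lim_monn}) gives uniqueness and continuous dependence of the blow-up profile (Theorem \ref{thm:cont_blow_up}), and this feeds into Whitney's extension theorem and the implicit function theorem, with a covering argument over $k,n$, and compact sets, and with the dimension bounds $n\leq N-1$ resp.\ $n\leq N-2$ obtained exactly as you argue from the non-triviality of the trace of $p_k^{x_0}$ and the fact that a harmonic polynomial in a single variable is affine; the paper delegates the Whitney/implicit-function step to \cite[Theorem 7.7]{Sire2020} via Lemma \ref{lemma:stratification_1}. The one small inaccuracy is your closing invocation of Federer's reduction principle, which plays no role in this theorem --- it is used only for the Hausdorff dimension estimates of Theorem \ref{thm:hausd}.
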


The last result of our paper yields estimates on the Hausdorff dimension of the nodal set and of its regular and singular part.

\begin{theorem}\label{thm:hausd}
	Let $N\geq 2$. Let $u\in H^{1,a}(B_1^+)$ be the unique solution of \eqref{eq:strong_form_2} and assume $u\not\equiv 0$ on $B_1'$. Then
	\begin{equation}\label{eq:hausd_th_1}
	\text{either}\quad\mathcal{Z}(u)=\emptyset\quad\text{or}\quad\dim_H (\mathcal{Z}(u))=N-1.
	\end{equation}
	Furthermore, in the latter case
	\begin{equation}\label{eq:hausd_th_2}
	\text{either}\quad\mathcal{R}(u)=\emptyset\quad\text{or}\quad \dim_H (\mathcal{R}(u))=N-1
	\end{equation}
	and
	\begin{equation}\label{eq:hausd_th_3}
	\text{either}\quad\mathcal{S}(u)=\emptyset\quad\text{or}\quad\dim_H (\mathcal{S}(u))\leq N-1.
	\end{equation}
\end{theorem}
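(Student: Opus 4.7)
My plan relies on combining the structural results already established, together with standard facts about Hausdorff dimension: $\sigma$-stability under countable unions, and the observation that an embedded $C^1$ manifold has Hausdorff dimension equal to its topological dimension. All three assertions decompose the task along the splitting $\mathcal{Z}(u) = \mathcal{R}(u) \cup \mathcal{S}(u)$ introduced in the paper.

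Claims \eqref{eq:hausd_th_2} and \eqref{eq:hausd_th_3} are direct consequences of Proposition \ref{prop:reg_part} and Theorem \ref{thm:stratification}, respectively. For \eqref{eq:hausd_th_2}, $\mathcal{R}(u)$ is a $C^{1,\alpha}$-hypersurface of $B_1'$, so if it is nonempty it locally looks like an open subset of $\R^{N-1}$ and therefore has Hausdorff dimension $N-1$. For \eqref{eq:hausd_th_3}, the stratification shows $\mathcal{S}(u) = \mathcal{S}^y(u) \cup \mathcal{S}^*(u)$ is contained in a countable union of $C^1$ manifolds of dimension at most $N-1$; $\sigma$-stability of $\dim_H$ then gives the bound.

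For \eqref{eq:hausd_th_1}, the upper bound $\dim_H \mathcal{Z}(u) \leq N-1$ follows at once from the splitting $\mathcal{Z}(u) = \mathcal{R}(u) \cup \mathcal{S}(u)$ and the two previous bounds. For the matching lower bound whenever $\mathcal{Z}(u) \neq \emptyset$, I would pick $x_0 \in \mathcal{Z}(u)$ and distinguish on the frequency $k = \mathcal{N}_{x_0}(u,0^+)$. If $k=1$, then $x_0 \in \mathcal{R}(u)$ and Proposition \ref{prop:reg_part} provides a relatively open $(N-1)$-dimensional $C^{1,\alpha}$ neighborhood inside $\mathcal{Z}(u)$, giving $\dim_H \mathcal{Z}(u) \geq N-1$. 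If $k \geq 2$, one exploits Theorem \ref{thm:blow_up_homo}: the rescalings $r^{-k} u_r^{x_0}$ converge in $C^{1,\alpha}(B_1')$ to $p_k^{x_0}$, which by Remark \ref{rmk:homo_blow_up} does not vanish identically on the thin space, and then the nodal set of $u$ near $x_0$ inherits at scale $r$ an approximate copy of the zero locus $\{p_k^{x_0}(\cdot,0) = 0\} \subset \R^N$; combined with a Whitney-extension type argument in the spirit of the proof of Theorem \ref{thm:stratification}, this transfers codimension-one zero set information back to $\mathcal{Z}(u)$.

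The main obstacle is precisely this last transfer. The zero set of a nontrivial homogeneous polynomial on $\R^N$ need not have Hausdorff dimension $N-1$ (as the trace of $y^2 - |x|^2/N$ shows), so the argument must leverage that, when $\mathcal{R}(u) = \emptyset$, each blow-up $p_k^{x_0}$ does provide an $(N-1)$-dimensional zero structure on the thin space, or else reduce to the presence of a regular point. This requires exploiting sign-changing information of the $a$-harmonic polynomials together with the strong convergence of blow-ups, and is the one step that does not follow purely formally from the structural theorems already proved.
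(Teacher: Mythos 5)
Your decomposition strategy---first obtaining the bounds for $\mathcal{R}(u)$ and $\mathcal{S}(u)$, then unioning to treat $\mathcal{Z}(u)$---is genuinely different from the paper's. The paper applies Federer's Reduction Principle (Theorem \ref{thm:federer}) directly with $\Sigma(u)=\mathcal{Z}(u)$, over a class $\mathcal{F}$ of (rescaled/translated) solutions to problems of the form \eqref{eq:class_pbm}; the key step is showing the exponent $d$ in Federer's conclusion satisfies $d\leq N-1$. This is done by contradiction: if $d=N$, Federer produces a nontrivial homogeneous element $\hat{u}\in\mathcal{F}$ whose trace vanishes on all of $B_1'$ together with its weighted conormal derivative, contradicting unique continuation for $A_2$-weighted equations. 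For the singular part the paper simply uses $\mathcal{S}(u)\subset\mathcal{Z}(u)$ instead of the stratification theorem. Your alternative is cleaner where it works, but it buys you less: it does not by itself reproduce the "either empty or $\leq d$" alternative, and it inherits the hypotheses of the structural theorems.

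Two concrete problems. First, you derive \eqref{eq:hausd_th_3} from Theorem \ref{thm:stratification}, which is stated only for $N\geq 3$, whereas Theorem \ref{thm:hausd} is stated for $N\geq 2$; this also infects your derivation of the upper bound in \eqref{eq:hausd_th_1}, which depends on the $\mathcal{S}(u)$ bound. (Lemma \ref{lemma:stratification_1} carries no dimension restriction and would have sufficed for the crude $\leq N-1$ bound via $\sigma$-stability, but you invoke the theorem, not the lemma.) Second, you correctly identify that the lower bound $\dim_H(\mathcal{Z}(u))\geq N-1$ does not follow formally from the structural results---the trace $p_k^{x_0}(\cdot,0)$ of a blow-up may have a zero set of much lower dimension, as your quadratic example shows---and the transfer you sketch remains incomplete. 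That honesty is appropriate; you should be aware, though, that the paper's written argument also establishes explicitly only the upper bound $\dim_H(\mathcal{Z}(u))\leq N-1$ via Federer, so the route you were groping towards (through sign-changing properties of blow-ups or density of regular points) is not the one the paper itself pursues.
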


The proof  relies on Theorem \ref{thm:blow_up_homo} and Federer's Reduction Principle. We observe that the previous result immediately implies the boundary unique continuation from sets of positive $N$-dimensional measure.

\begin{corollary}
	Let $N\geq 2$. Let $u\in H^{1,a}(B_1^+)$ be the unique solution to \eqref{eq:strong_form_2}. If $\abs{\mathcal{Z}(u)}_N>0$, then $u\equiv 0$ in $B_1^+$.
\end{corollary}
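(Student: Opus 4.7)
The plan is to proceed by contradiction and reduce the statement to Theorem \ref{thm:hausd}, with a short boundary propagation step to pass from the thin space $B_1'$ to the bulk $B_1^+$. Assume $\abs{\mathcal{Z}(u)}_N>0$ and, for contradiction, suppose $u\not\equiv 0$ on $B_1'$. The heart of the argument is then a direct appeal to Theorem \ref{thm:hausd}: under exactly this hypothesis the theorem forces
\[
    \text{either } \mathcal{Z}(u)=\emptyset \quad\text{or}\quad \dim_H(\mathcal{Z}(u))=N-1,
\]
and in both cases the $N$-dimensional Hausdorff measure of $\mathcal{Z}(u)$ vanishes. Since on subsets of $\R^N$ the $N$-dimensional Hausdorff and Lebesgue measures coincide up to a dimensional constant, this yields $\abs{\mathcal{Z}(u)}_N=0$, contradicting the standing hypothesis. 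Therefore $u\equiv 0$ on $B_1'$.

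It remains to upgrade $u\equiv 0$ on $B_1'$ to $u\equiv 0$ in $B_1^+$. Fix any point $x_0\in B_1'$ and invoke Theorem \ref{thm:blow_up_homo}: the rescalings $r^{-k}u^{x_0}_r$ converge in $C^{0,\alpha}(\overline{B_1^+})$ to a blow-up polynomial $p_k^{x_0}\in\mathds{P}_k^a$. Because $u\equiv 0$ on $B_1'$, passing to the limit on the thin space gives $p_k^{x_0}(\cdot,0)\equiv 0$ on $\R^N$. By Remark \ref{rmk:homo_blow_up}, a nontrivial blow-up limit cannot vanish identically on the thin space, so we are forced into the alternative that $u$ vanishes at $x_0$ to every finite order. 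The strong unique continuation corollary, applied at $x_0$ by translation invariance of the problem, then delivers $u\equiv 0$ in $B_1^+$.

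The main obstacle I anticipate is precisely this second step: the hypothesis concerns a subset of the thin space, so Theorem \ref{thm:hausd} alone produces vanishing only on $B_1'$, and pushing this into the half-ball genuinely requires the boundary blow-up analysis developed earlier together with the nontriviality on $\R^N\times\{0\}$ of Remark \ref{rmk:homo_blow_up}. Once that propagation is available, the rest of the argument is the elementary fact that Hausdorff dimension strictly less than $N$ forces zero $N$-dimensional Lebesgue measure, together with a single application of Theorem \ref{thm:hausd}.
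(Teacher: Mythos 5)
Your first step — reducing to Theorem \ref{thm:hausd} — is exactly what the paper intends: if $\abs{\mathcal{Z}(u)}_N>0$ then $\mathcal{Z}(u)$ is nonempty with $\dim_H(\mathcal{Z}(u))=N$, so neither alternative of the theorem can hold, forcing $u\equiv 0$ on $B_1'$. You are also right that this only produces vanishing on the thin space and that an extra step is needed to conclude $u\equiv 0$ in $B_1^+$; the paper treats this step as implicit.

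The second step, however, is logically off as written. There is no genuine ``alternative'' of the kind you describe: once $u\not\equiv 0$ in $B_1^+$, Proposition \ref{prop:nondegeneracy} (via $\gamma=\lim_{r\to 0^+}r^{-2k}H(u,r)\in(0,\infty)$, used in the proof of Theorem \ref{thm:blow_up_homo}) guarantees that $p_k^{x_0}$ is nontrivial at every $x_0\in B_1'$. So the correct argument is the direct contradiction: if $u\not\equiv 0$ in $B_1^+$, the blow-up $p_k^{x_0}$ at any $x_0\in B_1'$ is a nonzero element of $\mathds{P}_k^a$ which, since $u\equiv 0$ on $B_1'$, vanishes identically on $\R^N\times\{0\}$; this contradicts Remark \ref{rmk:homo_blow_up}, and we are done. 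Your detour through the strong unique continuation corollary is both unnecessary and not quite applicable: that corollary requires $u(z)=o(|z|^n)$ for $z\in\overline{B_1^+}$, i.e.\ vanishing to infinite order \emph{in the half-ball}, which you have not established from $u\equiv 0$ on $B_1'$ alone; and ``$p_k^{x_0}$ vanishes on the thin space, hence $u$ vanishes to all finite orders'' is not a valid inference, since triviality of a single blow-up of degree $k$ only controls the $k$-th order behavior.

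A more elementary route for the second step, and likely what the paper regards as immediate: $u\equiv 0$ on $B_1'$ forces $\lambda_-(u^-)^{p-1}-\lambda_+(u^+)^{p-1}\equiv 0$ on $B_1'$, so both the trace and the weighted conormal derivative $\lim_{y\to 0^+}y^a\partial_y u$ vanish on $B_1'$. The extension of $u$ by zero to $B_1\subset\R^{N+1}$ is then $a$-harmonic across $\{y=0\}$ and vanishes on the open set $B_1\cap\{y<0\}$, so unique continuation for $A_2$-weighted elliptic equations (\cite{Tao2008}, cited already in Remark \ref{rmk:homo_blow_up}) yields $u\equiv 0$ in $B_1^+$. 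Your blow-up route is a legitimate alternative once tightened as above, but this Cauchy-data argument is shorter and does not lean on the whole blow-up machinery.
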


	We conclude the introduction with a couple of comments. We first emphasize that the assumption $p\geq 2$ is made in view of the fact that the value $2$ delineates a threshold, making the problem superlinear: indeed, in the sublinear regime $p\in (1,2)$, even though some of the results of the present paper still hold true, the structure of the of the nodal set might be qualitatively different and this is somehow suggested by the available results in the local case $a=-1$ (i.e. $s=1$). For instance, already in dimension $1$, the ODE $-u''=\lambda \abs{u}^{p-2}u$ (corresponding to the case $\lambda_-=\lambda_+=\lambda>0$) admits nontrivial solutions with arbitrary large nodal set. Therefore, the equation
	\[
		-\Delta u=\lambda_-(u^-)^{p-1}-\lambda_+(u^+)^{p-1}
	\]
	does not enjoy the unique continuation property when $p\in (1,2)$ (see e.g. \cite{Ruland2018}). We refer to \cite{Fotouhi2017} for the study of the free boundary $\partial\{u<0\}\cup\partial\{u>0\}$ of the problem above. In the nonlocal framework a similar situation is expected, even if we cannot exclude some peculiar features to emerge: anyway this requires further investigation, which may be interesting to pursue in the future.
	
	We finally mention some further possible research topics, which are left open by our present work. In the same spirit as other classical problems, one might investigate higher regularity of the free boundary, concerning both the regular and the singular part. In addition to the classical works, starting points in this direction can be some recent outstanding approaches, such as the ones developed in \cite{MR3491531,Colombo2020,Figalli2020} (see also \cite{DSV2021,Figalli2019,Spolaor2021}). Another interesting point one may investigate concerns the possible vanishing orders of solutions to \eqref{eq:strong_form_2}. We proved in Theorem \ref{thm:blow_up_homo} that the vanishing order must be a nonnegative integer, but still it is not clear whether it could be any possible element of $\mathbb{N}$ or not. For instance there could be an upper bound (such as in the linear case, see \cite{Ruland2017}) or some integer numbers might be excluded.

\subsection{Outline of the paper}

The paper is organized as follows. In Section \ref{sec:regularity} we provide some basic properties of the solution,  including the optimal regularity in H\"older spaces. Section \ref{sec:monotonicity_form} is devoted to the proof of  monotonicity of a perturbed Almgren type frequency function and of a Monneau functional. These results are then applied in Section \ref{sec:blow_up} to perform a blow-up analysis for a suitable scaling of the solution, which leads to the proof of Theorem \ref{thm:blow_up_homo}. Finally, in Section \ref{sec:free_boundary} we prove the structure of the free boundary and Hausdorff dimension estimates.




\section{Optimal regularity and main properties of the solution}\label{sec:regularity}

In this section we higlight some important features of the solution and we establish its optimal regularity.

In order to prove existence and uniqueness of a solution to problem \eqref{eq:strong_form_2}, it is useful to set up a variational framework. Hence, we introduce, for any $v\in H^{1,a}(B_1^+)\cap L^p(B_1')$, the following energy functional
\begin{equation}\label{eq:functional}
	J(v):=\frac{1}{2}\int_{B_1^+}y^a\abs{\nabla u}^2\dxdy+\frac{1}{p}\int_{B_1'}(\lambda_-(u^-)^p+\lambda_+(u^+)^p)\dx
\end{equation}
and the constraint
\begin{equation}\label{eq:constraint}
	\Theta=\Theta_g:=\left\{v\in H^{1,a}(B_1^+){\cap L^p(B_1')}\colon v-g\in H^{1,a}_{0,S_1^+}(B_1^+) \right\}.
\end{equation}
We will need the following Poincar\'e inequality.
\begin{lemma}\label{lemma:poincare}
	For all $r>0$ and for all $v\in H^{1,a}(B_r^+)$, there holds
	\[
	\frac{N+a}{r^2}\int_{B_r^+}y^a v^2\dxdy\leq \int_{B_r^+}y^a\abs{\nabla v}^2\dxdy+\frac{1}{r} \int_{S_r^+}y^av^2\ds .
	\]
\end{lemma}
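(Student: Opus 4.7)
The natural approach is a weighted Rellich/Pohozaev-type identity obtained by integrating the divergence of the radial vector field $y^a v^2\, z$ on $B_r^+$. By the very definition of $H^{1,a}(B_r^+)$ as the completion of $C^\infty(\overline{B_r^+})$, it suffices to prove the inequality for $v\in C^\infty(\overline{B_r^+})$, and then conclude by density (all three terms are continuous with respect to the $H^{1,a}$ norm, the trace term via the weighted trace inequality mentioned right before this lemma).

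For smooth $v$, I would compute directly
\[
\dive(y^a v^2 z)=(N+1+a)\,y^a v^2+2\,y^a v\, z\cdot\nabla v,
\]
using $\dive z=N+1$ and $\partial_y(y^a)=a\,y^{a-1}$. Applying the divergence theorem on the truncated region $B_r^+\cap\{y>\varepsilon\}$, the outer boundary $S_r^+\cap\{y>\varepsilon\}$ contributes $\int y^a v^2\,(z\cdot \nu)\,dS=r\int y^a v^2\,dS$ (since $\nu=z/r$), while the flat piece $B_r'\cap\{y=\varepsilon\}$ contributes $-\varepsilon^{a+1}\int v^2\,dx$, which vanishes as $\varepsilon\to 0^+$ because $a+1>0$. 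This yields the identity
\[
(N+1+a)\int_{B_r^+}y^a v^2\dxdy+2\int_{B_r^+}y^a v\,z\cdot\nabla v\dxdy=r\int_{S_r^+}y^a v^2\ds.
\]

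To obtain the stated inequality I would estimate the cross term with the elementary Young inequality $2|v|\,|z|\,|\nabla v|\le v^2+|z|^2|\nabla v|^2$ together with $|z|\le r$ on $B_r^+$, giving
\[
\Bigl|2\int_{B_r^+}y^a v\, z\cdot\nabla v\dxdy\Bigr|\le \int_{B_r^+}y^a v^2\dxdy+r^2\int_{B_r^+}y^a|\nabla v|^2\dxdy.
\]
Plugging this into the identity and absorbing one copy of $\int y^a v^2$ on the left converts the constant $N+1+a$ into $N+a$; dividing by $r^2$ produces exactly the inequality in the statement.

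The only delicate point is the justification of the boundary computation when $a\in(-1,0)$, where $y^a$ is singular at the thin space: this is precisely why the truncation at $y=\varepsilon$ is needed, and why the exponent $a+1>0$ in the vanishing boundary term is crucial. Everything else is a routine divergence-theorem/Young's-inequality calculation; no finer structural property of the equation is involved, which is why this purely weighted Poincaré-trace inequality can be stated in such generality for arbitrary $v\in H^{1,a}(B_r^+)$.
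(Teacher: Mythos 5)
Your proof follows exactly the same route as the paper: integrate $\dive(y^a v^2 z)=(N+1+a)y^a v^2+2y^a v\,z\cdot\nabla v$ over $B_r^+$ and apply Young's inequality. You have simply spelled out the details (the $\varepsilon$-truncation near $\{y=0\}$ and the precise form of Young's inequality) that the paper's one-line proof leaves implicit, and your computation is correct.
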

\begin{proof}
	Just by integration of the identity
	\[
	\dive(y^a u^2 z)=2y^a u\nabla u\cdot z+(N+a+1)y^a u^2
	\]
	over $B_r^+$ and Young's inequality (with $z=(x,y)$).
\end{proof}

This variational setting allows us to prove the following.

\begin{proposition}\label{prop:exist_unique}
	There exists a unique solution to problem \eqref{eq:strong_form_2}, i.e. a function $u\in \Theta$ such that
	\begin{equation*}
		\int_{B_1^+}y^a\nabla u\cdot\nabla\phi\dxdy=\int_{B_1'}(\lambda_-(u^-)^{p-1}-\lambda_+(u^+)^{p-1})\phi\dx,
	\end{equation*}
	for all $\phi\in H^{1,a}_{0,S_1^+}(B_1^+)$. In particular, the solution $u$ is the unique minimum of the functional $J$ under the constraint $\Theta$. Here $\Theta$ and $J$ are  as in \eqref{eq:functional} and \eqref{eq:constraint}.
\end{proposition}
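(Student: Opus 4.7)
The plan is to apply the direct method of the calculus of variations to the functional $J$ on the affine set $\Theta$, exploiting convexity to obtain both existence and uniqueness, and then derive the weak formulation as the Euler--Lagrange equation.

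First, I would verify that $\Theta$ is nonempty (it contains $g$) and check that $J$ is well defined and bounded below on $\Theta$. Coercivity follows from the weighted Poincaré inequality just proved: writing $v = (v-g) + g$ with $v-g \in H^{1,a}_{0,S_1^+}(B_1^+)$, Lemma \ref{lemma:poincare} (applied on $B_1^+$) yields
\[
\int_{B_1^+} y^a (v-g)^2 \dxdy \le \frac{1}{N+a}\int_{B_1^+} y^a |\nabla(v-g)|^2\dxdy,
\]
since the boundary integral on $S_1^+$ vanishes for functions in $H^{1,a}_{0,S_1^+}$. Combined with the bound on $g$, this gives $\|v\|_{H^{1,a}(B_1^+)} \le C\bigl(J(v)^{1/2} + \|g\|_{H^{1,a}(B_1^+)}\bigr)$, and the penalization terms are nonnegative. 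Taking a minimizing sequence $\{v_n\}\subset\Theta$, I extract a subsequence converging weakly in $H^{1,a}(B_1^+)$ to some $u$; the weighted trace/Sobolev embedding for the $A_2$ weight $y^a$ (Lemma \ref{lemma:trace_ineq}) gives compactness of the trace map into $L^p(B_1')$ (or at least $L^q$ for suitable $q$, with weak convergence in $L^p(B_1')$ from boundedness of $J$), so one may pass to the limit in both terms of $J$ by lower semicontinuity (the gradient term is weakly lsc and convex; the penalization terms are convex continuous in the trace). Since $\Theta$ is weakly closed (it is affine with $H^{1,a}_{0,S_1^+}(B_1^+)$ a closed linear subspace), we have $u \in \Theta$ and $J(u) = \inf_\Theta J$.

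Uniqueness follows from strict convexity of $J$: the Dirichlet term $v \mapsto \tfrac12\int_{B_1^+} y^a|\nabla v|^2\dxdy$ is strictly convex on the affine set $\Theta$ (any two elements differ by an element of $H^{1,a}_{0,S_1^+}$, on which Poincaré forces the gradient seminorm to be a genuine norm), and the penalization terms $t\mapsto \lambda_\pm (t^\pm)^p/p$ are convex for $p\ge 2$. Hence $J$ has at most one minimizer on $\Theta$.

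Finally, I would derive the weak equation by computing $\frac{d}{dt}J(u+t\phi)\big|_{t=0}$ for arbitrary $\phi \in H^{1,a}_{0,S_1^+}(B_1^+)$, noting that $u+t\phi \in \Theta$ for all $t\in\R$. Using the elementary identities $\frac{d}{dt}\bigl((u+t\phi)^\pm\bigr)^p\big|_{t=0} = \pm p\,(u^\pm)^{p-1}\phi$ and dominated convergence (justified by $p\ge 2$ and the integrability of $u^\pm$ in $L^p(B_1')$), the first variation yields exactly
\[
\int_{B_1^+} y^a \nabla u\cdot\nabla\phi\dxdy = \int_{B_1'}\bigl(\lambda_-(u^-)^{p-1} - \lambda_+(u^+)^{p-1}\bigr)\phi \dx,
\]
which is the required weak formulation. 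Conversely, any critical point is a minimizer by convexity, so minimizers and weak solutions coincide. The main technical subtlety is ensuring that the penalization terms pass to the limit along the minimizing sequence and that the first variation is justified; both rely on the $A_2$-weighted trace/embedding theory invoked in Lemma \ref{lemma:trace_ineq}.
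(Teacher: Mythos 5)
Your proposal is correct and follows essentially the same route as the paper: the direct method on the affine constraint set $\Theta$, coercivity via the Poincar\'e inequality of Lemma \ref{lemma:poincare} (with the boundary term dropping on $H^{1,a}_{0,S_1^+}$), weak lower semicontinuity, compactness of the trace embedding, and convexity for uniqueness and the Euler--Lagrange equation. The only cosmetic difference is in passing to the limit in the penalization term: the paper invokes Fatou's Lemma (using $L^p(B_1')$-boundedness of the traces from $J(v_n)\le C$ together with a.e.\ convergence coming from compactness into $L^2(B_1')$), while you use weak $L^p$-convergence plus convexity/lower semicontinuity; both are fine. One small point in your favor: for uniqueness you correctly locate the \emph{strict} convexity in the Dirichlet term on the coset $\Theta$ (the maps $t\mapsto(t^\pm)^p$ are convex but not strictly so), whereas the paper's one-line justification leaves this implicit.
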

\begin{proof}
	Thanks to the form of the functional $J$, the proof of existence is standard and is based on Lemma \ref{lemma:poincare}, weak lower-semicontinuity of the $H^{1,a}$-norm, compactness of the trace embedding $H^{1,a}(B_1^+)\hookrightarrow L^2(B_1')$, and Fatou's Lemma. Uniqueness follows from convexity of the functions $t\mapsto (t^{\pm})^p$.
\end{proof}

The first notable property of the solution is that its positive and negative part are subharmonic functions, with respect to the operator $-\dive(y^a\nabla(\cdot))$, in $B_1^+\cup B_1'$. We first give the precise definition.
\begin{definition}\label{def:subharm_harm}
	We say that a function $v\in H^{1,a}(B_1^+)$ is \emph{$a$-subharmonic} in $B_1^+\cup B_1'$ if
	\[
	\int_{B_1^+}y^a\nabla v\cdot\nabla \phi\dxdy\leq 0,
	\]
	for all $\phi\in H^{1,a}_{0,S_1^+}(B_1^+)$ such that $\phi\geq 0$ a.e. Moreover, we say that $v\in H^{1,a}(B_1^+)$ is \emph{$a$-harmonic} in $B_1^+\cup B_1'$ if
	\[
	\int_{B_1^+}y^a\nabla v\cdot\nabla \phi\dxdy=0,
	\]
	for all $\phi\in H^{1,a}_{0,S_1^+}(B_1^+)$.		Finally, we say that $v\in H^{1,a}_{\textup{loc}}(\Omega)$ is \emph{$a$-harmonic} in an open $\Omega\sub\R^{N+1}$ if
	\[
	\int_{\Omega}y^a\nabla v\cdot\nabla \phi\dxdy=0,
	\]
	for all $\phi\in C_c^\infty(\Omega)$.
\end{definition}

We now prove $a$-subharmonicity of the positive and negative part of solutions to \eqref{eq:strong_form_2}.

\begin{lemma}\label{lemma:subharm}
	Let $u\in H^{1,a}(B_1^+)$ be the unique solution to \eqref{eq:strong_form_2}. Then $u^+$ and $u^-$ are $a$-subharmonic in $B_1^+\cup B_1'$.
\end{lemma}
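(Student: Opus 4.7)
The plan is to test the weak formulation of \eqref{eq:strong_form_2} against a one-sided truncation of $u$. Concretely, I would fix a family of smooth, nondecreasing functions $\eta_\varepsilon \colon \R \to [0,1]$ with $\eta_\varepsilon(t) = 0$ for $t \leq 0$, $\eta_\varepsilon(t) = 1$ for $t \geq \varepsilon$, and $\eta_\varepsilon' \geq 0$. Given an arbitrary nonnegative $\phi \in H^{1,a}_{0,S_1^+}(B_1^+)$, the composition $\phi\, \eta_\varepsilon(u)$ lies in $H^{1,a}_{0,S_1^+}(B_1^+)$ because $\eta_\varepsilon$ is Lipschitz and bounded. (If needed, one first argues with $\phi \in C_c^\infty(\overline{B_1^+}\setminus\overline{S_1^+})$ and then invokes density.)

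Plugging $\phi\, \eta_\varepsilon(u)$ into the weak formulation and expanding the gradient gives
\begin{equation*}
	\int_{B_1^+} y^a \eta_\varepsilon(u)\, \nabla u \cdot \nabla \phi \dxdy + \int_{B_1^+} y^a \phi\, \eta_\varepsilon'(u)\, \abs{\nabla u}^2 \dxdy = \int_{B_1'} \bigl(\lambda_-(u^-)^{p-1} - \lambda_+(u^+)^{p-1}\bigr)\, \phi\, \eta_\varepsilon(u) \dx.
\end{equation*}
The second integral on the left is nonnegative because $\eta_\varepsilon' \geq 0$ and $\phi \geq 0$. On the right, as $\varepsilon \to 0^+$ the factor $\eta_\varepsilon(u)$ converges pointwise to $\chi_{\{u > 0\}}$; on this set $u^- \equiv 0$ and the integrand reduces to $-\lambda_+(u^+)^{p-1}\phi \leq 0$. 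By dominated convergence (the boundary integrand is controlled by $(\lambda_+ + \lambda_-)|u|^{p-1}\phi \in L^1(B_1')$, since $u \in L^p$ and $\phi$ has an $L^p$ trace in the relevant range), the right-hand side converges to a nonpositive quantity. Meanwhile, in the first left-hand integral, $\eta_\varepsilon(u)\nabla u \to \chi_{\{u > 0\}}\nabla u = \nabla u^+$ a.e., and dominated convergence yields $\int_{B_1^+} y^a \nabla u^+ \cdot \nabla \phi \dxdy$ in the limit. Since the nonnegative second term can only make the sum larger, we conclude $\int_{B_1^+} y^a \nabla u^+ \cdot \nabla \phi \dxdy \leq 0$, which is the desired $a$-subharmonicity of $u^+$.

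For $u^-$ I would run the symmetric argument with the test function $\phi\, \eta_\varepsilon(-u)$. Expanding produces $\int y^a \eta_\varepsilon(-u)\nabla u \cdot \nabla \phi - \int y^a \phi\, \eta_\varepsilon'(-u)|\nabla u|^2$ on the left, equal to the boundary term with $\eta_\varepsilon(-u) \to \chi_{\{u < 0\}}$ in the limit, where only the $\lambda_-(u^-)^{p-1}\phi \geq 0$ contribution survives. Noting $\eta_\varepsilon(-u)\nabla u \to -\nabla u^-$ a.e. and that the $\eta_\varepsilon'$ term is nonpositive, passing $\varepsilon \to 0^+$ gives $-\int y^a \nabla u^- \cdot \nabla \phi \geq 0$, i.e. $u^-$ is $a$-subharmonic.

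The only delicate point is the passage to the limit, both in the bulk (where one must control $\eta_\varepsilon'(u)|\nabla u|^2$ just enough to know the term has a definite sign, without needing convergence) and on the thin boundary $B_1'$ (where one needs integrability of the nonlinearity against $\phi$, using the fact that $u \in L^p(B_1')$ and $\phi$ admits a trace in an appropriate Lebesgue space via the weighted trace embeddings mentioned earlier in the paper). Everything else is a routine application of Lebesgue's dominated convergence theorem together with the standard chain rule for Sobolev functions composed with Lipschitz maps.
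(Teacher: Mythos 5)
Your argument is correct, but it takes a genuinely different route from the paper. You proceed from the Euler--Lagrange (weak) formulation of the equation and plug in the Kato/Stampacchia-type test function $\phi\,\eta_\varepsilon(u)$; the variational inequality then drops out because the $\eta_\varepsilon'(u)\abs{\nabla u}^2$ term has a favorable sign and the boundary nonlinearity restricted to $\{u>0\}$ is nonpositive. The paper instead runs a direct energy comparison: it builds the one-sided competitor $u_\varepsilon = (u-\varepsilon\phi)^+ - u^-$, which agrees with $u$ on $S_1^+$, uses $u_\varepsilon^+ \leq u^+$ and $u_\varepsilon^- = u^-$ to show that the boundary part of the functional $J$ does not increase, invokes minimality $J(u)\leq J(u_\varepsilon)$ to deduce $\int y^a\abs{\nabla u^+}^2 \leq \int y^a\abs{\nabla(u-\varepsilon\phi)^+}^2$, and then expands in $\varepsilon$ to extract $\int y^a\nabla u^+\cdot\nabla\phi \leq \frac{\varepsilon}{2}\int y^a\abs{\nabla\phi}^2\chi_{\{u\geq 0\}}$. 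Your approach only uses that $u$ satisfies the weak equation, so it would apply to any (not necessarily minimizing) weak solution; the paper's approach leans on the variational characterization and avoids any discussion of passing a truncation to the limit, yielding the sharper quantitative inequality above in one shot. Both are standard and both close cleanly; your handling of the two delicate points (keeping the sign of the $\eta_\varepsilon'$ term without trying to pass it to the limit, and dominating the boundary integrand using $u\in L^\infty$ together with the trace embedding) is correct.
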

\begin{proof}
	Let $\phi\in H^{1,a}_{0,S_1^+}(B_1^+)$ such that $\phi\geq 0$ a.e. and, for $\epsilon>0$ small, let $u_\epsilon:=(u-\epsilon\phi)^+-u^-$. One can immediately notice that
	\begin{equation}\label{eq:subharmm_1}
		u_\epsilon^+=(u-\epsilon\phi)^+\leq u^+\quad\text{and that}\quad u_\epsilon^-=u^-.
	\end{equation}
	Since $u_\epsilon=u$ on $S_1^+$, by minimality and \eqref{eq:subharmm_1} we have
	\[
		\int_{B_1^+}y^a\abs{\nabla u^+}^2\dxdy\leq \int_{B_1^+}y^a\abs{\nabla (u-\epsilon\phi)^+}^2\dxdy.
	\]
	This yields
	\begin{multline*}
		\int_{B_1^+}y^a\abs{\nabla u}^2\chi_{\{u\geq 0\}}\dxdy\leq \int_{B_1^+}y^a\abs{\nabla (u-\epsilon\phi)}^2\chi_{\{u\geq \epsilon\phi\}}\dxdy \\ \leq \int_{B_1^+}y^a\abs{\nabla (u-\epsilon\phi)}^2\chi_{\{u\geq 0\}}\dxdy.
	\end{multline*}
	By expanding the last term we obtain
	\[
		\int_{B_1^+}y^a\nabla u^+\cdot\nabla \phi\dxdy\leq \frac{\epsilon}{2}\int_{B_1^+}y^a	\abs{\nabla\phi}^2\chi_{\{u\geq 0\}}\dxdy	
	\]
	for all $\epsilon>0$, which implies $a$-subharmonicity of $u^+$. Similarly, by letting $u_\epsilon:=u^+-(u+\epsilon\phi)^-$ we can prove $a$-subharmonicity of $u^-$.
\end{proof}

Since $u^+$ and $u^-$ are $a$-subharmonic, the weak maximum principle (together with boundedness of $g$ on $S_1^+$ and a reflection argument) immediately implies the following, see \cite[Theorem 2.2.2]{Fabes1982} and \cite[Theorem 6.7]{Murthy1968}.

\begin{lemma}[Weak maximum principle]\label{lemma:weak_max_princ}
	Let $u\in H^{1,a}(B_1^+)$ be the unique solution of problem \eqref{eq:strong_form_2}. Then
	\[
		\min\{0,\inf_{S_1^+}g\}\leq u\leq \max\{0,\sup_{S_1^+}g\}
	\]
	a.e. in $B_1^+$. In particular $u\in L^\infty(B_1^+)$ and
	\[
	\norm{u}_{L^\infty(B_1^+)}\leq \norm{g}_{L^\infty(S_1^+)}.
	\]
\end{lemma}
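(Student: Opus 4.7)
My plan is to exploit the $a$-subharmonicity of $u^{\pm}$ established in Lemma \ref{lemma:subharm}, combined with an even reflection across the thin space $\{y=0\}$, so as to reduce to the standard weak maximum principle for degenerate elliptic operators with Muckenhoupt $A_2$ weights as in \cite{Fabes1982,Murthy1968}.

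The first step is to set $M_+ := \max\{0,\sup_{S_1^+} g\}$ and $M_- := \max\{0,-\inf_{S_1^+} g\}$, and to show separately that $u^+ \leq M_+$ and $u^- \leq M_-$ a.e.\ in $B_1^+$. Both claims are symmetric, so I would concentrate on $u^+$. Since $g$ is continuous on $\overline{B_1^+}$ by assumption, $M_+$ is finite, and from $u=g$ on $S_1^+$ we get $u^+ \leq M_+$ on $S_1^+$ in the trace sense.

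The second step is the reflection argument. For $v \in H^{1,a}(B_1^+)$, let $\tilde v$ be its even extension across $\{y=0\}$ to the full ball $B_1 \subset \R^{N+1}$. Since $|y|^a$ lies in the Muckenhoupt class $A_2$ on $B_1$, $\tilde v$ belongs to the weighted Sobolev space $H^1(B_1;|y|^a\dxdy)$. The key point is that the test function class $H^{1,a}_{0,S_1^+}(B_1^+)$ has vanishing trace only on $S_1^+$ and is free on $B_1'$; consequently, the $a$-subharmonicity condition
\[
\int_{B_1^+}y^a\nabla u^+\cdot\nabla\phi\dxdy\leq 0\quad\text{for all }\phi\in H^{1,a}_{0,S_1^+}(B_1^+),\ \phi\geq 0,
\]
transfers under even reflection to
\[
\int_{B_1}|y|^a\nabla \widetilde{u^+}\cdot\nabla \Phi\,\dxdy\leq 0\quad\text{for all }\Phi\in H^1_0(B_1;|y|^a\dxdy),\ \Phi\geq 0,
\]
because any such even $\Phi$ restricts to an admissible test function on $B_1^+$, and a general nonnegative $\Phi\in H^1_0$ can be symmetrized (or split into its even/odd parts, with the odd part contributing zero by symmetry).

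The third step is to apply the weak maximum principle for nonnegative supersolutions of $-\dive(|y|^a\nabla\cdot)$ on $B_1$ (see \cite[Theorem 2.2.2]{Fabes1982} and \cite[Theorem 6.7]{Murthy1968}), which yields
\[
\widetilde{u^+} \leq \sup_{\partial B_1}\widetilde{u^+} = \sup_{S_1^+} u^+ \leq M_+ \quad\text{a.e.\ in }B_1,
\]
hence $u \leq u^+ \leq M_+$ a.e.\ in $B_1^+$. Running the same argument with $u^-$ in place of $u^+$ gives $-u \leq M_-$, so $u \geq \min\{0,\inf_{S_1^+}g\}$, and combining both bounds proves the double inequality. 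The $L^\infty$ bound follows at once since $\max\{M_+, M_-\} \leq \|g\|_{L^\infty(S_1^+)}$.

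The only real subtlety is the reflection step: one must be sure that the natural (weighted co-normal) boundary condition on $B_1'$ encoded by the test function space $H^{1,a}_{0,S_1^+}(B_1^+)$ is exactly the one under which the even reflection stays $a$-subharmonic. Everything else is a direct invocation of the weighted maximum principle of Fabes--Kenig--Serapioni.
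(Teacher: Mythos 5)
Your proposal follows essentially the same route as the paper: the $a$-subharmonicity of $u^{\pm}$ from Lemma \ref{lemma:subharm}, an even reflection across $\{y=0\}$, and the Fabes--Kenig--Serapioni/Murthy--Stampacchia weak maximum principle for $A_2$-weighted operators, exactly the ingredients the paper cites. Your write-up simply fills in the reflection details (the even/odd splitting of test functions) that the paper leaves implicit, and the argument is correct.
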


We also recall the validity of mean value inequalities for $a$-subharmonic functions. The following result immediately follows from \cite[Lemma A.1]{Wang2016} and an even-in-$y$ reflection.

\begin{lemma}\label{lemma:mean_value_ineq}
	Let $v\in H^{1,a}\cap C(B_1^+\cup B_1')$ be $a$-subharmonic in $B_1^+\cup B_1'$, that is
	\[
	\int_{B_1^+}y^a\nabla v\cdot\nabla \phi\dxdy\leq 0,
	\]
	for all $\phi\in H^{1,a}_{0,S_1^+}(B_1^+)$ such that $\phi\geq 0$ a.e. Then
	\[
		v(x_0,0)\leq \frac{\alpha_{N,a}}{r^{N+a+1}}\int_{B_r^+(x_0)}y^a v\dxdy\quad\text{and}\quad	v(x_0,0)\leq \frac{\beta_{N,a}}{r^{N+a}}\int_{S_r^+(x_0)}y^a v\ds
	\]
	for all $x_0\in B_1'$ and $r>0$ such that $B_r^+(x_0)\subset\subset B_1^+$, where
	\[
		\alpha_{N,a}:=\int_{B_1^+}y^a\dxdy\quad\text{and}\quad\beta_{N,a}:=\int_{S_1^+}y^a\ds.
	\]
\end{lemma}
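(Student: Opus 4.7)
My plan is to reduce this to a mean value inequality on the full (non-halved) ball by reflecting $v$ evenly across the thin space, and then invoking \cite[Lemma A.1]{Wang2016} as the paper suggests. Concretely, I would set $\tilde v(x,y):=v(x,|y|)$, which belongs to $H^{1,a}_{\textup{loc}}(B_1)\cap C(B_1)$ since the weight $|y|^a$ is even and $v$ is continuous up to $B_1'$.

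The key step is to verify that $\tilde v$ is $a$-subharmonic on the whole ball $B_1$ in the sense
\[
\int_{B_1}|y|^a\nabla \tilde v\cdot\nabla\psi\dxdy\leq 0,\qquad \psi\in C_c^\infty(B_1),~\psi\geq 0.
\]
The plan here is to split the integral into the contributions over $B_1^+$ and $B_1^-=\{y<0\}\cap B_1$ and change variables $y\mapsto -y$ in the latter. Because $\partial_y\tilde v$ picks up a sign but so does $\partial_y\psi$ after the change of variables, while the tangential derivatives are unaffected, the $B_1^-$ piece becomes $\int_{B_1^+} y^a \nabla v\cdot\nabla\bar\psi\dxdy$ with $\bar\psi(x,y):=\psi(x,-y)$. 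Hence
\[
\int_{B_1}|y|^a\nabla \tilde v\cdot\nabla\psi\dxdy=\int_{B_1^+} y^a\nabla v\cdot\nabla(\psi+\bar\psi)\dxdy,
\]
and $(\psi+\bar\psi)|_{B_1^+}$ is a nonnegative element of $H^{1,a}_{0,S_1^+}(B_1^+)$ (it is compactly supported away from $S_1^+$ since $\psi\in C_c^\infty(B_1)$). The hypothesis of $a$-subharmonicity then gives the desired inequality. This is the step I expect to be the main obstacle, mostly in checking that the reflected test function genuinely lies in the space of admissible test functions against which $v$ is $a$-subharmonic.

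Once the global subharmonicity of $\tilde v$ is established, I would apply \cite[Lemma A.1]{Wang2016} to $\tilde v$ on any ball $B_r(x_0)$ with $B_r^+(x_0)\subset\subset B_1^+\cup B_1'$, which yields (with the natural normalization associated to the weight $|y|^a$ and a center on $\{y=0\}$)
\[
\tilde v(x_0,0)\leq \frac{C}{r^{N+a+1}}\int_{B_r(x_0)}|y|^a\tilde v\dxdy,\qquad \tilde v(x_0,0)\leq \frac{C'}{r^{N+a}}\int_{\partial B_r(x_0)}|y|^a\tilde v\ds,
\]
for appropriate constants depending only on $N$ and $a$. Finally, I would use the even symmetry of $\tilde v$ and $|y|^a$ in $y$ to halve the domains of integration: $\tilde v(x_0,0)=v(x_0,0)$, $\int_{B_r(x_0)}|y|^a\tilde v=2\int_{B_r^+(x_0)}y^a v$, and the analogous identity on the sphere. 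Tracking the constants (they reduce, up to the factor $2$, to reciprocals of $\alpha_{N,a}=\int_{B_1^+}y^a\dxdy$ and $\beta_{N,a}=\int_{S_1^+}y^a\ds$ by a rescaling argument on $B_r^+$ and $S_r^+$) then yields the two stated mean value inequalities.
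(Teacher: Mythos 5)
Your proposal is correct and matches the paper's approach exactly: the paper's proof is a single sentence, ``The following result immediately follows from \cite[Lemma A.1]{Wang2016} and an even-in-$y$ reflection,'' and you have filled in precisely the details that sentence is shorthand for. The main point you identified as the potential obstacle---that for $\psi\in C_c^\infty(B_1)$, $\psi\ge 0$, the restriction $(\psi+\bar\psi)|_{B_1^+}$ is a legitimate nonnegative test function in $H^{1,a}_{0,S_1^+}(B_1^+)$---is indeed fine because $\psi$ is compactly supported in $B_1$ and hence $(\psi+\bar\psi)|_{\overline{B_1^+}}$ vanishes near $\overline{S_1^+}$, so it lies in $C_c^\infty(\overline{B_1^+}\setminus\overline{S_1^+})$; your sign-tracking in the $y\mapsto -y$ change of variables is also correct. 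One small remark worth noting: the natural mean-value constants coming out of your calculation are the \emph{reciprocals} $1/\alpha_{N,a}$ and $1/\beta_{N,a}$ (as you say), whereas the lemma as printed has $\alpha_{N,a}$ and $\beta_{N,a}$ in the numerator; this appears to be a typo in the statement rather than a flaw in your argument, since the constants are defined as integrals of the weight, not their inverses.
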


Boundedness of the solution, stated in Lemma \ref{lemma:weak_max_princ}, allows us to deduce its regularity. In particular, from \cite[Theorem 6.4]{Allen2015} (see also \cite[Theorem 1.5 and 1.6]{Sire2019}) and \cite[Theorem 1.1]{Danielli2021} we deduce the following.

\begin{lemma}\label{lemma:regularity}
	Let $u\in H^{1,a}(B_1^+)$ be the unique solution to problem \eqref{eq:strong_form_2}. There holds
	\begin{itemize}
		\item if $a>0$ then $u\in C^{0,1-a}(B_{1/4}^+)$ and
		\[
		\norm{u}_{C^{0,1-a}(B_{1/4}^+)}\leq C(N,a,p,\norm{u}_{L^\infty(B_1^+)});
		\]
		\item if $a< 0$ then $u\in C^{1,-a}(B_{1/4}^+\cup B_{1/4}')$ and
		\[
		\norm{u}_{C^{1,-a}(B_{1/4}^+\cup B_{1/4}')}\leq C(N,a,p,\norm{u}_{L^\infty(B_1^+)});
		\]
		\item if $a=0$ then $u\in C^{1,\alpha}(B_{1/4}^+\cup B_{1/4}')$ for some $\alpha\in (0,1)$ (depending on $p$) and
		\[
		\norm{u}_{C^{1,\alpha}(B_{1/4}^+\cup B_{1/4}')}\leq C(N,\norm{u}_{L^\infty(B_1^+)}).
		\]
	\end{itemize}
\end{lemma}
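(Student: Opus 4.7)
The plan is to reduce the statement to known regularity results for the operator $-\dive(y^a\nabla\cdot)$ with a prescribed Neumann-type boundary data on the thin space, by first observing that the right-hand side of the boundary condition in \eqref{eq:strong_form_2} is uniformly bounded. Concretely, set
\[
f(x):=\lambda_-(u^-(x,0))^{p-1}-\lambda_+(u^+(x,0))^{p-1},\qquad x\in B_1'.
\]
By Lemma \ref{lemma:weak_max_princ} we have $u\in L^\infty(B_1^+)$ with $\|u\|_{L^\infty(B_1^+)}\le\|g\|_{L^\infty(S_1^+)}$, and since $u$ admits a trace on $B_1'$ (the weight $y^a$ is Muckenhoupt $A_2$, as recalled in the introduction), this trace is itself bounded. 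Consequently $f\in L^\infty(B_1')$ with norm controlled by $\|u\|_{L^\infty(B_1^+)}^{p-1}$. In this reformulation, $u$ solves
\[
-\dive(y^a\nabla u)=0\ \text{in }B_1^+,\qquad -\lim_{y\to 0^+}y^a\partial_y u=f\ \text{on }B_1',
\]
with $f\in L^\infty(B_1')$, which is precisely the setting addressed in the cited literature.

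I would now split into the three cases and invoke the relevant results. For $a>0$ (degenerate weight), the boundary regularity theory for $A_2$-weighted operators with $L^\infty$ Neumann datum gives $u\in C^{0,1-a}$ up to the thin boundary: this is exactly \cite[Theorem 6.4]{Allen2015} (see also \cite[Theorem 1.5]{Sire2019}), and the quantitative estimate on $B_{1/4}^+$ follows from standard interior/boundary rescaling, with the constant depending only on $N$, $a$, $p$ and $\|u\|_{L^\infty(B_1^+)}$ through $\|f\|_{L^\infty(B_1')}$. For $a<0$ (singular weight), the same kind of result but in the higher-regularity regime — namely \cite[Theorem 1.6]{Sire2019} — yields $u\in C^{1,-a}(B_{1/4}^+\cup B_{1/4}')$, again with the announced quantitative bound. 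The Hölder exponents $1-a$ and $-a$ are exactly the ones dictated by the homogeneity of the model $a$-harmonic profiles $y^{1-a}$ and constant/linear extensions, so no improvement can be expected in general.

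For the remaining case $a=0$, the weight disappears and the problem becomes a genuine Neumann problem for the Laplacian with data $f$, but the regularity one obtains now depends on $p$ through the Hölder regularity of $f$. I would proceed by a short bootstrap: first, $f\in L^\infty$ and the standard Neumann theory (De Giorgi–Nash–Moser) gives $u\in C^{0,\beta}$ up to $B_{1/4}'$ for some $\beta$; inserting this into $f=\lambda_-(u^-)^{p-1}-\lambda_+(u^+)^{p-1}$ and using $p\ge 2$ yields $f\in C^{0,\beta(p-1)\wedge\beta}$, and Schauder for the Neumann Laplacian upgrades $u$ to $C^{1,\alpha}(B_{1/4}^+\cup B_{1/4}')$ for some $\alpha\in(0,1)$ depending on $p$. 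This is exactly the content of \cite[Theorem 1.1]{Danielli2021}, which already packages this bootstrap in the present two-phase penalized setting, so I would simply quote it.

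The only nontrivial conceptual point — and the main obstacle if one wanted to reprove these regularity results from scratch — is the sharp dependence of the exponent on the sign of $a$: in the degenerate regime $a>0$ the optimal exponent is capped at $1-a<1$ regardless of how smooth $f$ is, because of the limiting behaviour of $a$-harmonic functions near $\{y=0\}$, whereas for $a\le 0$ one recovers a $C^{1,\text{something}}$ statement. Since the cited theorems from \cite{Allen2015}, \cite{Sire2019}, \cite{Danielli2021} address exactly these phenomena, the proof ultimately reduces to checking that our $f$ meets their hypotheses, which is immediate from Lemma \ref{lemma:weak_max_princ}.
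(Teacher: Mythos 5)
Your proof plan is correct and matches the paper's approach: the paper likewise reduces to the observation that the Neumann datum is bounded (via Lemma \ref{lemma:weak_max_princ}) and then cites \cite[Theorem 6.4]{Allen2015}, \cite[Theorem 1.5 and 1.6]{Sire2019} for $a\neq 0$, and \cite[Theorem 1.1]{Danielli2021} for $a=0$. Your extra remarks on the bootstrap in the $a=0$ case and on the sharpness of the exponent are consistent with the references invoked.
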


\begin{remark}
	We point out that the result is optimal when $a\neq 0$. Indeed, let $\Psi(x)$ be harmonic in $\R^N$. Then, the function $u(x,y)=(y^{1-a}+1-a)\Psi(x)$ is a solution of \eqref{eq:strong_form_2} with $\lambda_+=\lambda_-=1$ and $p=2$, and so Lemma \ref{lemma:regularity} is sharp. On the contrary, when $a=0$ the regularity is actually higher, depending on the exponent $p$ (we refer to \cite{Danielli2021} for a detailed exposition).
\end{remark}

Even though the global regularity cannot exceed the one just stated, if we look separately at the derivatives in $x$ and the weighted derivative in $y$, we are able to prove something more.

\begin{lemma}[{\cite[Proposition 3.1]{Tan2011}}]\label{lemma:tan2011}
	Let $U\in H^{1,a}(B_1^+)$ be a weak solution of
	\[
		\begin{bvp}
			-\dive(y^a\nabla U)&=0, &&\text{in }B_1^+, \\
			-\lim_{y\to 0^+}y^a\frac{\partial U}{\partial y}&=b\, U+d, &&\text{on }B_1',
		\end{bvp}
	\]
	with $b,d\in L^\infty(B_1')$. Then $U\in L^\infty(B_{1/2}^+)$ and
	\[
		\norm{U}_{L^\infty(B_{1/2}^+)}\leq C(\norm{U}_{L^{2,a}(B_1^+)}+\norm{d}_{L^\infty(B_1')}),
	\]
	for a certain $C>0$ depending on $N,a$ and $\norm{b}_{L^\infty(B_1')}$.
\end{lemma}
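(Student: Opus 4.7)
The plan is a Moser iteration adapted to the $A_2$-weighted setting, combined with the weighted trace inequality for $H^{1,a}$ referenced as Lemma~\ref{lemma:trace_ineq}. First I would reflect $U$ evenly across $\{y=0\}$ to obtain $\tilde U\in H^{1,a}_{\textup{loc}}(B_1)$ defined by $\tilde U(x,y):=U(x,|y|)$. A direct computation based on the Neumann-type condition shows that, for every $\phi\in C_c^\infty(B_1)$,
\[
    \int_{B_1} |y|^a \nabla \tilde U\cdot\nabla\phi\dxdy = 2\int_{B_1'} (b\tilde U+d)\phi\dx.
\]
Since $|y|^a\in A_2$ for $a\in(-1,1)$, the Fabes--Kenig--Serapioni framework \cite{Fabes1982} supplies on every $B_R\subset B_1$ a weighted Sobolev inequality
\[
    \left(\int_{B_R}|y|^a w^{2\chi}\dxdy\right)^{1/\chi}\leq C\int_{B_R}|y|^a\bigl(|\nabla w|^2+w^2\bigr)\dxdy
\]
for some $\chi=\chi(N,a)>1$, while Lemma~\ref{lemma:trace_ineq} gives, after a standard rescaling, a scale-invariant trace embedding $H^{1,a}(B_R)\hookrightarrow L^q(B_R')$ with $q>2$.

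Next I would run the iteration. Fix concentric radii $1/2\leq r<R\leq 1$, pick a cutoff $\eta\in C_c^\infty(B_R)$ with $\eta\equiv 1$ on $B_r$ and $|\nabla\eta|\leq C/(R-r)$, set $k:=\norm{d}_{L^\infty(B_1')}+1$, and for $\beta\geq 1$ test the weak formulation with $\phi=\eta^2\,\mathrm{sgn}(\tilde U)(|\tilde U|+k)^{2\beta-1}$ (justified by a routine truncation at level $M\to\infty$). Using $|b\tilde U+d|\leq (\norm{b}_{L^\infty(B_1')}+1)(|\tilde U|+k)$ and Young's inequality on the bulk side, one obtains the weighted Caccioppoli-type estimate
\[
    \int_{B_R}|y|^a\bigl|\nabla\bigl(\eta(|\tilde U|+k)^\beta\bigr)\bigr|^2\dxdy \leq \frac{C\beta^2}{(R-r)^2}\int_{B_R}|y|^a(|\tilde U|+k)^{2\beta}\dxdy + C\beta\int_{B_R'}(|\tilde U|+k)^{2\beta}\eta^2\dx.
\]
The thin-set term is controlled by applying the trace inequality to $v=\eta(|\tilde U|+k)^\beta$ and absorbing its gradient part into the left-hand side with $\varepsilon$ small; feeding the resulting bound into the weighted Sobolev inequality applied to $w=\eta(|\tilde U|+k)^\beta$ yields the reverse-H\"older inequality
\[
    \left(\int_{B_r}|y|^a(|\tilde U|+k)^{2\beta\chi}\dxdy\right)^{1/\chi}\leq \frac{C\beta^2}{(R-r)^2}\int_{B_R}|y|^a(|\tilde U|+k)^{2\beta}\dxdy.
\]

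Finally, I would iterate on $\beta_j:=\chi^j$ with radii $r_j:=1/2+2^{-j-1}$; a standard telescoping of the resulting product gives
\[
    \norm{\tilde U}_{L^\infty(B_{1/2})}\leq C\bigl(\norm{\tilde U}_{L^{2,a}(B_1)}+k\bigr),
\]
which, upon unfolding the reflection and recalling the definition of $k$, is precisely the claimed estimate, with a constant depending only on $N$, $a$ and $\norm{b}_{L^\infty(B_1')}$. The main obstacle is the thin-set boundary integral, which sits on a set where the weight $|y|^a$ degenerates (or blows up) and hence does not enter a weighted Caccioppoli scheme directly; the decisive point is the scale-invariant form of Lemma~\ref{lemma:trace_ineq}, which produces the small factor needed to absorb the associated gradient term at each Moser level. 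Once this absorption is in place, the iteration proceeds exactly as in the classical unweighted De Giorgi/Moser scheme.
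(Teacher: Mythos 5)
The paper does not prove this lemma: it is imported verbatim from \cite[Proposition~3.1]{Tan2011}, and Remark~\ref{rmk:tan2011} merely notes that the cited proof is ``an iterative Moser-type argument.'' Your blind attempt follows exactly that strategy (even reflection across $\{y=0\}$, weighted Caccioppoli, $A_2$-Sobolev gain, iteration), so at the level of overall architecture it is faithful to what the paper points to.

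There is, however, a genuine gap in the absorption step, which is the one place where the argument is not routine. You claim the boundary term
\[
	C\beta\int_{B_R'}\eta^2(|\tilde U|+k)^{2\beta}\dx
\]
can be absorbed by applying Lemma~\ref{lemma:trace_ineq} to $w:=\eta(|\tilde U|+k)^\beta$. Since $w$ vanishes on $S_R^+$, that lemma gives
\[
	\int_{B_R'}w^2\dx\leq C_1 R^{1-a}\int_{B_R^+}y^a\abs{\nabla w}^2\dxdy,
\]
and the coefficient $C_1 R^{1-a}$ is of order one for $R\in[1/2,1]$. After multiplying by the Caccioppoli prefactor $C\beta(\norm{b}_{L^\infty}+1)$, absorption would require $C\beta(\norm{b}_{L^\infty}+1)C_1R^{1-a}<1$, a constraint that fails as soon as $\beta$ is moderately large, i.e.\ precisely in the regime the Moser iteration must reach. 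Lemma~\ref{lemma:trace_ineq} is an $L^2$ trace inequality and does not by itself supply a small parameter. To close the argument one needs either a quantitative $\varepsilon$-trace inequality of the form $\int_{B_R'}w^2\leq \varepsilon\int_{B_R^+}y^a\abs{\nabla w}^2+C(\varepsilon)\int_{B_R^+}y^aw^2$ with $C(\varepsilon)$ growing at most polynomially in $1/\varepsilon$ (this follows from the supercritical trace embedding $H^{1,a}(B_R^+)\hookrightarrow L^q(B_R')$, $q>2$, of \cite[Theorem~2.3]{Nekvinda1993}, not from Lemma~\ref{lemma:trace_ineq}), with $\varepsilon$ chosen $\sim(\beta(\norm{b}_{L^\infty}+1))^{-1}$ and the resulting $\beta$-dependent blow-up tracked through the telescoping product; or, alternatively, a two-track iteration in which the bulk quantities $\int_{B_r^+}y^a(|\tilde U|+k)^{2\beta}$ and the thin-set quantities $\int_{B_r'}(|\tilde U|+k)^{2\beta}$ are iterated simultaneously via the bulk and trace Sobolev gains, with no absorption at all. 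As written, your absorption step does not go through, and this is exactly where the polynomial dependence of the final constant on $\norm{b}_{L^\infty}$ recorded in Remark~\ref{rmk:tan2011} is produced.

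Two smaller remarks: your appeal to ``Lemma~\ref{lemma:trace_ineq} gives, after a standard rescaling, a scale-invariant trace embedding $H^{1,a}(B_R)\hookrightarrow L^q(B_R')$ with $q>2$'' misattributes the supercritical trace embedding, which is Nekvinda's theorem rather than a rescaling of the stated $L^2$ inequality; and the reflected weak formulation should be checked by splitting the test function into even and odd parts, giving the factor $2$ on the right-hand side as you state.
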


\begin{remark}\label{rmk:tan2011}
	By inspecting the proof of the previous lemma, which is based on an iterative Moser-type argument, one can find a more explicit expression for the constant $C$ and emphasize the dependence on $\norm{b}_{L^\infty(B_1')}$. More precisely
	\[
		C=c_1(\max\{c_2,\norm{b}_{L^\infty(B_1')}+1\})^{c_3},
	\]
	where the $c_i$'s are positive constants depending on $N$ and $a$.
\end{remark}

\begin{lemma}\label{lemma:high_reg}
	Let $u\in H^{1,a}(B_1^+)$ be the unique solution to problem \eqref{eq:strong_form_2}. Then there exists $\alpha\in(0,1)\cap(0,1-a)$, $R_0\in (0,1/4)$ and $C>0$, possibly depending on $N,a,p,\norm{g}_{L^\infty(S_1^+)}$, such that
	\begin{gather}
		\norm{\nabla_{x}u}_{C^{0,\alpha}(B_{R_0}^+)}\leq C\norm{\nabla u}_{L^{2,a}(B_1^+)}, \label{eq:high_reg_th_1}\\
		\norm{y^a\frac{\partial u}{\partial y}}_{C^{0,\alpha}(B_{R_0}^+)}\leq C. \label{eq:high_reg_th_2}
	\end{gather}
\end{lemma}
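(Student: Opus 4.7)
My plan is to handle the tangential derivatives $\nabla_x u$ and the weighted normal derivative $y^a\partial_y u$ separately, exploiting very different structural features in each case.

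For the tangential derivatives I would use that the equation and the weight $y^a$ are translation-invariant in $x$, so the difference quotients
\[
v^h(x,y):=\frac{u(x+he_i,y)-u(x,y)}{h}
\]
are $a$-harmonic in $B_{3/4}^+$. Since $t\mapsto(t^\pm)^{p-1}$ is $C^1$ for $p>2$ and Lipschitz for $p=2$, the mean value theorem turns the boundary condition of \eqref{eq:strong_form_2} into a linear Robin-type condition for $v^h$:
\[
-\lim_{y\to 0^+}y^a\frac{\partial v^h}{\partial y}=b^h(x)\,v^h(x,0)\quad\text{on }B_{3/4}',
\]
with $\norm{b^h}_{L^\infty}\leq C(p,\lambda_\pm)\max\{1,\norm{u}_{L^\infty}^{p-2}\}$. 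By Lemma \ref{lemma:weak_max_princ} this bound is uniform in $h$ and controlled by $\norm{g}_{L^\infty(S_1^+)}$, so Lemma \ref{lemma:tan2011} (with $d=0$) gives
\[
\norm{v^h}_{L^\infty(B_{1/2}^+)}\leq C\,\norm{v^h}_{L^{2,a}(B_{3/4}^+)}\leq C\,\norm{\nabla u}_{L^{2,a}(B_1^+)}
\]
uniformly in $h$. Passing to the limit $h\to 0$ yields $v:=\partial_{x_i}u\in L^\infty(B_{1/2}^+)$ with the same bound, and $v$ weakly solves the same homogeneous problem with $L^\infty$ coefficient $b=-(p-1)[\lambda_-(u^-)^{p-2}+\lambda_+(u^+)^{p-2}]$ (interpreting $(t^\pm)^0$ as $\chi_{\{\pm t>0\}}$ in the limit case $p=2$). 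A boundary Hölder regularity result for $A_2$-weighted divergence-form equations with $L^\infty$ Robin data --- such as \cite[Theorem 1.5]{Sire2019} or the arguments behind \cite[Theorem 6.4]{Allen2015} --- then upgrades $v$ to $C^{0,\alpha}(B_{R_0}^+)$ for some $\alpha\in(0,1)\cap(0,1-a)$, giving \eqref{eq:high_reg_th_1}.

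For the weighted normal derivative I would set $w:=y^a\partial_y u$ and exploit the \emph{conjugate equation}. Since $-\dive(y^a\nabla\cdot)$ is uniformly elliptic away from $\{y=0\}$, $u$ is smooth in $B_1^+$, and a direct calculation from $-\dive(y^a\nabla u)=0$ gives $\partial_y w=-y^a\Delta_x u$ and
\[
-\dive(y^{-a}\nabla w)=0\quad\text{in }B_1^+.
\]
The weight $y^{-a}$ still belongs to $A_2$ because $-a\in(-1,1)$. The boundary condition of \eqref{eq:strong_form_2} moreover prescribes the trace
\[
w(x,0)=-\lambda_-(u^-)^{p-1}+\lambda_+(u^+)^{p-1}\quad\text{on }B_1',
\]
which is Hölder continuous on $B_{1/4}'$ by Lemma \ref{lemma:regularity} combined with the fact that $t\mapsto(t^\pm)^{p-1}$ is locally Lipschitz for $p\geq 2$ and $u\in L^\infty$. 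Since $\int_{B_1^+}y^{-a}w^2=\int_{B_1^+}y^a(\partial_y u)^2\leq\norm{\nabla u}_{L^{2,a}(B_1^+)}^2<\infty$, boundary regularity for $A_2$-weighted Dirichlet problems (Fabes--Kenig--Serapioni \cite{Fabes1982}) applies and yields $w\in C^{0,\alpha}(B_{R_0}^+)$, establishing \eqref{eq:high_reg_th_2}.

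The main obstacle is the passage to the conjugate equation in the second part: one must rigorously justify that $w=y^a\partial_y u$, so far only a smooth function in the open half-ball, extends continuously up to $B_1'$ and attains there precisely the stated Hölder trace, and that the boundary Dirichlet regularity theory applies in this mixed setting (Dirichlet data prescribed only on the flat part $B_1'$, with no a priori control on $S_1^+$). I would close this by comparing $w$ with the unique solution of the auxiliary Dirichlet problem for $-\dive(y^{-a}\nabla\cdot)$ in $B_{R_0}^+$ with the given Hölder data on $B_{R_0}'$ and the (smooth, $L^\infty$-controlled) interior trace of $w$ on $S_{R_0}^+$, then invoking uniqueness. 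The case $p=2$ in the first step is also slightly delicate, since $(t^\pm)^{p-1}$ is no longer $C^1$, but the elementary bound $|t^\pm-s^\pm|\leq|t-s|$ keeps the difference-quotient scheme uniform in $h$ nonetheless.
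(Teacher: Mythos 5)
Your tangential-derivative argument follows the paper's almost verbatim: difference quotients $u^h$, uniform $L^\infty$ bound on the resulting potential via Lipschitz continuity of $t\mapsto(t^\pm)^{p-1}$ together with Lemma \ref{lemma:weak_max_princ}, then Lemma \ref{lemma:tan2011} for $L^\infty$ control and \cite[Theorem 1.5]{Sire2019} for H\"older regularity. The only bookkeeping difference is that the paper applies the H\"older estimate to $u^h$ uniformly in $h$ and then uses Ascoli--Arzel\`a, whereas you pass to the limit first and then identify the linear equation satisfied by $\partial_{x_i}u$; the paper's ordering avoids having to justify the weak limiting equation (convergence of $V_h u^h$ to $b\,\partial_{x_i}u$), which you leave slightly implicit, but both routes work. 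For \eqref{eq:high_reg_th_2} the paper's proof is a one-liner: Lemma \ref{lemma:regularity} makes the Neumann datum $\lambda_-(u^-)^{p-1}-\lambda_+(u^+)^{p-1}$ H\"older on $B_{1/4}'$, and \cite[Lemma 4.5]{Cabre2014} --- a Schauder-type estimate for $a$-harmonic functions with H\"older Neumann data --- then gives H\"older continuity of $y^a\partial_y u$ up to $\{y=0\}$. Your conjugate-equation argument ($w=y^a\partial_y u$ solves $\dive(y^{-a}\nabla w)=0$ with prescribed H\"older Dirichlet trace on $B_1'$) is a correct and standard alternative route to the same conclusion; the obstacle you flag about trace identification and Dirichlet regularity with data only on the flat part is genuine but is precisely what the cited lemma packages up, and the remaining gap in your version (that $w\in H^{1,-a}$ locally up to $B_1'$) actually follows from the Caccioppoli bounds for $\partial_{x_i}u$ that you have already established in the first part. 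In short: same approach for \eqref{eq:high_reg_th_1}, and for \eqref{eq:high_reg_th_2} a from-scratch reconstruction of a black-box citation.
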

\begin{proof}
	For $i=1,\dots,N$, let $\bm{e}_i$ be the $i$-th vector of the standard basis of $\R^{N+1}$ and, for $h\in\R$ sufficiently small and $z=(x,y)\in B_{1/4}^+$, let
	\[
		u^h(x,y)=u_i^h(x,y):=\frac{u(x+h\bm{e}_i,y)-u(x,y)}{\abs{h}}.
	\]
	This function weakly satisfies
	\begin{equation*}
	\begin{bvp}
	-\dive(y^a\nabla u^h)&=0, &&\text{in }B_{1/4}^+, \\
	-\lim_{y\to 0^+}y^a\frac{\partial u^h}{\partial y}&=V_h u^h, &&\text{on }B_{1/4}',
	\end{bvp}
	\end{equation*}
	where
	\begin{multline*}
	V_h(x)=\bigg[\lambda_-\,\frac{(u^-(x+h\bm{e}_i,0))^{p-1}-(u^-(x,0))^{p-1}}{u(x+h\bm{e}_i,0)-u(x,0)}\\-\lambda_+\,\frac{(u^+(x+h\bm{e}_i,0))^{p-1}-(u^+(x,0))^{p-1}}{u(x+h\bm{e}_i,0)-u(x,0)}\bigg]\chi_{\{u^h(x)\neq 0\}}(x).
	\end{multline*}
	From Lemma \ref{lemma:tan2011} we know that $u^h\in L^\infty(B_{r_1}^+)$, for some $r_1\in(0,1/4)$ and that
	\begin{equation}\label{eq:high_reg_1}
	\norm{u_h}_{L^\infty(B_{r_1}^+)}\leq C\norm{u^h}_{L^{2,a}(B_{1/4}^+)},
	\end{equation}
	with $C_1>0$ depending on $N,a$ and $\norm{V^h}_{L^\infty(B_{1/4}')}$. Thanks to the Lipschitz continuity of the function $t\mapsto (t^\pm)^{p-1}$ on real compact intervals and to Lemma \ref{lemma:weak_max_princ}, we have that $V_h\in L^\infty(B_{1/4}')$ and its $L^\infty$-norm is bounded by a constant that depends on $p$, $\lambda_-$, $\lambda_+$ and $\norm{u}_{L^\infty(B_1^+)}$, uniformly with respect to $h$. More precisely, a direct study of the function
	\[
		(t_1,t_2)\mapsto \frac{\abs{(t_1^\pm)^{p-1}-(t_2^\pm)^{p-1}}}{\abs{t_1-t_2}}
	\]
	tells us that
	\begin{equation}\label{eq:high_reg_2}
		\norm{V_h}_{L^\infty(B_{1/4}')}\leq (\lambda_-+\lambda_+)(p-1)\norm{u}_{L^\infty(B_1^+)}^{p-2}\quad\text{uniformly in }h.
	\end{equation}
	Combining this fact with \eqref{eq:high_reg_1}, in view of Remark \ref{rmk:tan2011} and Lemma \ref{lemma:weak_max_princ}, we have that
	\begin{equation}\label{eq:high_reg_3}
		\norm{u^h}_{L^\infty(B_{r_1}^+)}\leq \tilde{C}_1\norm{u^h}_{L^{2,a}(B_{1/4}^+)},
	\end{equation}
	with $\tilde{C}_1>0$ depending on $N,a,\lambda_+,\lambda_-,p$ and $\norm{g}_{L^\infty(S_1^+)}$. From \cite[Theorem 1.5]{Sire2019} we know that, for any $\alpha\in (0,1)\cap(0,1-a)$ there exists $r_2\in (0,1/4)$ and $C_2>0$, depending on $N,a,\alpha$ such that
	\begin{equation}\label{eq:high_reg_4}
		\norm{u^h}_{C^{0,\alpha}(B_{r_2}^+)}\leq C_2\left[ \norm{u^h}_{L^{2,a}(B_{r_1}^+)}+\norm{V_h \,u^h}_{L^\infty(B_{r_1}^+)} \right].
	\end{equation}
	Actually, \cite[Theorem 1.5]{Sire2019} (see also \cite[Theorem 8.2]{Sire2019}) states this result for $V_h u^h\in L^q$ for some $q$ sufficiently large; nevertheless, by scanning the proof, one can check that it still holds true for $q=\infty$ without any change. On the other hand, by \eqref{eq:high_reg_2} and \eqref{eq:high_reg_3} we have that
	\[
		\norm{V_h \,u^h}_{L^\infty(B_{r_1}^+)}\leq \bar{C}_2\norm{u^h}_{L^{2,a}(B_{1/4}^+)},
	\]
	with $\bar{C}_2>0$ depending on $N,a,\lambda_+,\lambda_-,p$. Therefore, combining this with \eqref{eq:high_reg_4}, we deduce that
	\begin{equation}\label{eq:high_reg_5}
		\norm{u^h}_{C^{0,\alpha}(B_{r_2}^+)}\leq \tilde{C}_3\norm{u^h}_{L^{2,a}(B_{1/4}^+)}
	\end{equation}
	with $\tilde{C}_3>0$ depending on $N,a,\alpha,\lambda_+,\lambda_-,p$. Since $u\in H^{1,a}(B_1^+)$, we have that
	\[
		\norm{u^h}_{L^{2,a}(B_{1/4}^+)}\to \norm{\frac{\partial u}{\partial x_i}}_{L^{2,a}(B_{1/4}^+)}\quad\text{as }\abs{h}\to 0.
	\]
	 Therefore, by the Ascoli-Arzelà Theorem, up to a subsequence,
	 \[
	 	u^h \to  \frac{\partial u}{\partial x_i}\quad\text{uniformly in }\overline{B_{r_2}^+}~\text{as }\abs{h}\to 0
	 \]
	 and then, passing to the limit in \eqref{eq:high_reg_5} yields
	 \[
	 	\norm{\frac{\partial u}{\partial x_i}}_{C^{0,\alpha}(B_{r_2}^+)}\leq \tilde{C}_3\norm{\frac{\partial u}{\partial x_i}}_{L^{2,a}(B_{1/4}^+)}.
	 \]
	By iterating the argument for any $i=1,\dots,N$ we have that \eqref{eq:high_reg_th_1} holds. Estimate \eqref{eq:high_reg_th_2} follows from Lemma \ref{lemma:regularity} and \cite[Lemma 4.5]{Cabre2014}.
\end{proof}

\begin{remark}\label{rmk:smoothness}
	By standard elliptic regularity theory, since the weight $y^a$ is uniformly bounded in every compact subset of $B_1^+$, we have that $u\in C^\infty(B_1^+)$. Moreover, by a covering argument in $B_1^+\cup B_1'$, we can say that $u$, $\nabla_x u$ and $y^a\partial_y u$ are in $C^{0,\min\{1,1-a\}}_{\textup{loc}}(B_1^+\cup B_1')$.
\end{remark}

In view of the regularity of the solution, we have also that a strong maximum principle holds.

\begin{lemma}\label{lemma:str_max_princ}
	Let $u\in H^{1,a}(B_1^+)$ be the unique solution to \eqref{eq:strong_form_2} and assume $g\not\equiv 0$ on $S_1^+$. Then
	\begin{gather*}
		u>0\quad\text{in }B_1^+\cup B_1'\quad\text{if}\quad g\geq 0\quad\text{on }S_1^+, \\
		u<0\quad\text{in }B_1^+\cup B_1'\quad\text{if}\quad g\leq 0\quad\text{on }S_1^+.
	\end{gather*}
\end{lemma}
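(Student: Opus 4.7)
My plan is to treat the first case; the second follows by applying it to $v := -u$, which solves the analogous problem with $\lambda_+$ and $\lambda_-$ interchanged and Dirichlet datum $-g \geq 0$, $\not\equiv 0$. Assume henceforth $g \geq 0$, $g \not\equiv 0$. Lemma \ref{lemma:weak_max_princ} yields $u \geq 0$ a.e.\ in $B_1^+$, and the trace identification $u|_{S_1^+} = g \not\equiv 0$ rules out $u \equiv 0$. On every compact subset of $B_1^+$ the weight $y^a$ is smooth and bounded away from zero, so $-\dive(y^a \nabla u) = 0$ is locally uniformly elliptic with smooth coefficients; since $u$ is smooth (Remark \ref{rmk:smoothness}) and $B_1^+$ is connected, the classical strong maximum principle upgrades $u \geq 0$, $u \not\equiv 0$ to $u > 0$ in $B_1^+$.

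To propagate positivity to $B_1'$, I would first rewrite the nonlinear boundary condition as a linear Robin-type condition with non-negative bounded coefficient. Since $u \geq 0$ and $p \geq 2$, it reads
\[
-\lim_{y \to 0^+} y^a \partial_y u + \beta(x)\, u = 0 \quad\text{on } B_1', \qquad \beta(x) := \lambda_+\, u(x, 0)^{p-2},
\]
with $\beta \geq 0$ and $\|\beta\|_{L^\infty(B_1')} \leq \lambda_+\, \|g\|_{L^\infty(S_1^+)}^{p-2}$ by Lemma \ref{lemma:weak_max_princ}. Passing to the even reflection $\tilde u(x, y) := u(x, |y|)$, which lies in the natural weighted Sobolev space on $B_1$ associated with the $A_2$ weight $|y|^a$, a splitting of the weak formulation into the two half-balls shows that $\tilde u$ satisfies the Schr\"odinger-type equation
\[
-\dive(|y|^a \nabla \tilde u) + 2\beta\, \tilde u\, \mathcal{H}^N \llcorner B_1' = 0
\]
distributionally on $B_1$. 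Since $\tilde u \geq 0$ and the potential is non-negative, a standard adaptation of the Fabes--Kenig--Serapioni Harnack inequality for $A_2$-weighted operators to this Schr\"odinger-type setting yields the dichotomy ``$\tilde u \equiv 0$ on $B_1$'' or ``$\tilde u > 0$ on $B_1$''. The first alternative contradicts $\tilde u > 0$ in $B_1 \setminus B_1'$ (from the interior step applied to both halves), so $\tilde u > 0$ throughout $B_1$; in particular, $u > 0$ on $B_1'$, as desired.

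The main obstacle is the Harnack inequality for the reflected Schr\"odinger-type operator, whose potential is a bounded non-negative measure supported on the codimension-one thin space. The non-negativity of $\beta$ (coming from the sign of the nonlinearity, $u \geq 0$, and $p \geq 2$) makes the associated bilinear form coercive, and the $A_2$ character of $|y|^a$ for $a \in (-1, 1)$ allows the Moser iteration of Fabes--Kenig--Serapioni to go through and deliver the Harnack estimate on concentric balls; an alternative, more hands-on route would be to combine a weighted Hopf-type comparison with the explicit $a$-harmonic barrier $\psi(x, y) = y^{1-a}$ (which vanishes on the thin space and satisfies $\lim_{y \to 0^+} y^a \partial_y \psi = 1 - a > 0$) and the Hölder regularity of $y^a \partial_y u$ up to the boundary provided by Lemma \ref{lemma:high_reg}, but the Schr\"odinger--Harnack route is more systematic.
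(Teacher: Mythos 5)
Your proposal is correct, and your ``alternative, more hands-on route'' (barrier $\psi(x,y)=y^{1-a}$ plus a weighted Hopf-type comparison at a putative zero on the thin space, contradicting the boundary condition) is essentially what the paper does: it invokes the interior strong maximum principle and the boundary Hopf lemma for $A_2$-weighted operators from Fabes--Kenig--Serapioni and Cabr\'e--Sire rather than re-deriving them with a barrier. Your interior step (classical strong maximum principle on compact subsets where $y^a$ is smooth and nondegenerate, plus connectedness) and the reduction of the second statement to the first via $v=-u$ with $\lambda_\pm$ swapped are both correct.

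Your primary route, however, is genuinely different: rewrite the boundary condition as a Robin condition with nonnegative coefficient $\beta=\lambda_+ u^{p-2}$ (valid since $u\geq 0$ and $p\geq 2$), pass to the even reflection $\tilde u$ solving a Schr\"odinger equation $-\dive(|y|^a\nabla\tilde u)+2\beta\,\tilde u\,\mathcal{H}^N\llcorner B_1'=0$ on $B_1$, and invoke a Harnack inequality for this operator. The derivation of the reflected equation is correct, and the sign $\beta\geq 0$ is exactly what one needs. The weak point --- which you rightly flag as ``the main obstacle'' --- is that the potential is a bounded nonnegative \emph{measure supported on a codimension-one set}, which the Fabes--Kenig--Serapioni theory does not cover as stated: their potentials are functions in suitable $L^q$ or Stummel--Kato classes with respect to the weighted volume measure. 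Absorbing the $\mathcal{H}^N\llcorner B_1'$ term in the Moser iteration requires the $A_2$-weighted trace inequality (the paper's Lemma \ref{lemma:trace_ineq}) and a re-run of the iteration; this is plausible and has analogues in the boundary-Harnack literature for Neumann/Robin problems, but it is a nontrivial extension that would need to be worked out or cited precisely, not ``a standard adaptation.'' For this reason the Hopf-lemma route the paper takes --- and which you sketch as the fallback --- is the shorter, more self-contained path: at a hypothetical zero $x_0\in B_1'$, interior positivity plus the boundary Hopf lemma forces $\lim_{y\to 0^+}y^a\partial_y u(x_0,y)>0$, while the Robin condition evaluated at $u(x_0,0)=0$ forces this limit to vanish, a contradiction. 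If you wish to retain the Schr\"odinger--Harnack route, you should either state the needed codimension-one Harnack as a lemma and prove it via Moser iteration with the trace inequality, or replace it with the Hopf argument.
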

\begin{proof}
	In view of Remark \ref{rmk:smoothness}, the proof follows from Lemma \ref{lemma:weak_max_princ}, Hopf Lemma and boundary Hopf Lemma for $A_2$-weighted elliptic equations, which can be found respectively in \cite[Corollary 2.3.10]{Fabes1982} and \cite[Proposition 4.11]{Cabre2014}.
\end{proof}

\section{Monotonicity Formulas}\label{sec:monotonicity_form}

In this section we introduce a \emph{perturbed} Almgren type frequency function, suitable for our problem, and we prove its monotonicity. As a consequence we deduce the existence of the limit of the \emph{standard} Almgren frequency function of the solution, as the radius of the underlying ball vanishes. Finally, we introduce a Monneau type functional and we show it is nondecreasing with respect to the radius.

For any $v\in H^{1,a}(B_1^+)\cap L^p(B_1')$, $x_0\in B_1'$ and $r\in (0,R_0)$ (with $R_0$ as in Lemma \ref{lemma:high_reg}), such that $B_r^+(x_0)\sub B_{R_0}^+$, we define the functions
\begin{equation}\label{eq:def_D_H}
	D_{x_0}(v,r):=\frac{1}{r^{N+a-1}}\int_{B_r^+(x_0)}y^a\abs{\nabla v}^2\dxdy,\quad H_{x_0}(v,r):=\frac{1}{r^{N+a}}\int_{S_r^+(x_0)}y^a v^2\ds
\end{equation}
and the classical \emph{frequency} function
\begin{equation*}
	\mathcal{N}_{x_0}(v,r):=\frac{D_{x_0}(v,r)}{H_{x_0}(v,r)}.
\end{equation*}
Moreover we let
\begin{equation}\label{eq:def_F_G}
	F(v):=\lambda_-(v^-)^p+\lambda_+(v^+)^p,\quad G_{x_0}(v,r):=\frac{1}{r^{N+a-1}}\int_{B_r'(x_0)}F(v)\dx.
\end{equation}
Then we introduce the \emph{perturbed frequency} function
\begin{equation*}
	\tilde{\mathcal{N}}_{x_0}(v,r):=\frac{\tilde{D}_{x_0}(v,r)}{H_{x_0}(v,r)},\quad\text{where}\quad \tilde{D}_{x_0}(v,r):=D_{x_0}(v,r)+\frac{2}{p}G_{x_0}(v,r).
\end{equation*}
The first goal of the section is to prove monotonicity of $\tilde{\mathcal{N}}_{x_0}(u,r)$ with respect to $r\in (0,R_0)$, where $u$ is the unique solution to problem \eqref{eq:strong_form_2}. Hereafter, when we compute the functionals above at the origin $x_0=0$ and for $v=u$, we may drop this dependence in our notation and simply write $\tilde{\mathcal{N}}(r)$. Without loss of generality, in this section we can assume $x_0=0$.

\begin{lemma}\label{lemma:R-N}
	Let $v\in C^\infty(\overline{B_1^+})$. There holds the following formula, for $z\in B_1^+$,
	\begin{equation}\label{identity1}
		\dive(y^a\abs{\nabla v}^2 z-2y^a(\nabla v\cdot z)\nabla v)=(N+a-1)y^a\abs{\nabla v}^2-2(\nabla v\cdot z)\dive(y^a\nabla v).
	\end{equation}
\end{lemma}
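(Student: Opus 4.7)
The identity is pointwise on $B_1^+$, where $y>0$ and hence $y^a$ is smooth, so the plan is simply a direct expansion of both divergences on the left-hand side via the product rule, together with the observation that $z\cdot\nabla(y^a)=a y^a$ (since $\nabla y^a$ has only the last component $a y^{a-1}$, and the last coordinate of $z$ is $y$). I would treat the two summands separately and let the terms involving $z\cdot\nabla(|\nabla v|^2)$ cancel at the end.

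First I would expand
\[
\dive(y^a|\nabla v|^2 z)=y^a|\nabla v|^2\dive z+z\cdot\nabla(y^a|\nabla v|^2).
\]
Using $\dive z=N+1$ and the remark on $z\cdot\nabla y^a=ay^a$, this becomes
\[
(N+1+a)y^a|\nabla v|^2+y^a\,z\cdot\nabla(|\nabla v|^2).
\]

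Next, setting $f:=\nabla v\cdot z=\sum_k z_k\partial_k v$, I would expand
\[
\dive\!\bigl(y^a f\,\nabla v\bigr)=f\,\dive(y^a\nabla v)+y^a\nabla v\cdot\nabla f.
\]
The key intermediate step is $\nabla f=\nabla v+\sum_k z_k\nabla(\partial_k v)$, whence
\[
y^a\nabla v\cdot\nabla f=y^a|\nabla v|^2+y^a\sum_{j,k}z_k\,\partial_j v\,\partial_j\partial_k v=y^a|\nabla v|^2+\tfrac12 y^a\,z\cdot\nabla(|\nabla v|^2),
\]
after recognising the symmetric double sum as $\tfrac12 z\cdot\nabla(|\nabla v|^2)$. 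Multiplying by $-2$ and combining with the first expansion, the $y^a\,z\cdot\nabla(|\nabla v|^2)$ terms cancel exactly, the coefficient on $y^a|\nabla v|^2$ collapses from $(N+1+a)-2$ to $N+a-1$, and the inhomogeneous term $-2(\nabla v\cdot z)\dive(y^a\nabla v)$ survives, giving precisely the right-hand side.

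There is no genuine obstacle here: this is the weighted Rellich--Ne\v{c}as/Pohozaev identity for the operator $-\dive(y^a\nabla\cdot)$, and the whole content lies in correct bookkeeping of the coefficient $N+a-1$ and in confirming that the weight contributes only through $z\cdot\nabla y^a=ay^a$. The identity requires no boundary behaviour of $v$ or any equation satisfied by $v$; it is a purely algebraic pointwise identity for smooth $v$ and will be used later by integration against $z$ as a multiplier, so keeping track of the sign in front of the $\dive(y^a\nabla v)$ term is the only point that warrants care.
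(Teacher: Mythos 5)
Your proposal is correct and follows essentially the same route as the paper: both expand the left-hand side by the product rule and cancel the $y^a\,z\cdot\nabla(|\nabla v|^2)$ terms via the identity $z\cdot\nabla(|\nabla v|^2)=2\nabla v\cdot\nabla(\nabla v\cdot z)-2|\nabla v|^2$, which you re-derive in the intermediate step where you compute $\nabla f$. The bookkeeping of $\dive z=N+1$ and $z\cdot\nabla y^a=ay^a$ matches the paper's, so the arguments are the same up to presentation.
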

\begin{proof}
		Since the proof is essentially just computations, we only sketch it. We first have that
		\begin{multline}\label{eq:R-N_01}
			\dive(y^a\abs{\nabla v}^2 z-2y^a(\nabla v\cdot z)\nabla v) \\ =(N+a+1)y^a\abs{\nabla v}^2+y^a z\cdot\nabla(\abs{\nabla v}^2)-2(\nabla v\cdot z)\dive(y^a\nabla v)-2y^a\nabla v\cdot\nabla(\nabla v\cdot z).
		\end{multline}
		Moreover it is easy to check that the following identity holds
		\[
			z\cdot\nabla(\abs{\nabla v}^2)=2\nabla v\cdot\nabla(\nabla v\cdot z)-2\abs{\nabla v}^2.
		\]
		By plugging this into \eqref{eq:R-N_01} we obtain the desired conclusion.
\end{proof}

\begin{lemma}\label{lemma:R-N_int}
	For a.e. $r\in (0,R_0)$ we have that
	\begin{multline}\label{eq:R-N_int_th_1}
		\frac{1}{r^{N+a-1}}\int_{S_r^+}y^a\abs{\nabla u}^2\dxdy=\frac{N+a-1}{r}\tilde{D}(r)+\frac{2}{r^{N+a-1}}\int_{S_r^+}y^a\left(\frac{\partial u}{\partial\nnu}\right)^2\ds\\-\frac{2}{pr^{N+a-1}}\int_{\partial B_r'}F(u)\dsi+\frac{2(1-a)}{p r}G(r).
	\end{multline}
	and
	\begin{equation}\label{eq:R-N_int_th_2}
		\int_{S_r^+}y^a u\,\frac{\partial u}{\partial \nnu}\ds=\int_{B_r^+}y^a\abs{\nabla u}^2\dxdy+\int_{B_r'}(\lambda_-(u^-)^p+\lambda_+(u^+)^p)\dx.
	\end{equation}
\end{lemma}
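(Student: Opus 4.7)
The plan is to derive both identities from appropriate Rellich-Ne\v{c}as/Pohozaev-type computations, integrating suitable vector identities on $B_r^+$ and using the divergence theorem, then converting the boundary contributions on the flat part $B_r'$ via the Neumann condition encoded in \eqref{eq:strong_form_2}. Throughout, the Hölder regularity results of Lemma \ref{lemma:high_reg} (boundedness and continuity of $\nabla_x u$ and of $y^a \partial_y u$ up to $B_1'$) are what allow us to pass to the limit $y \to 0^+$ under the boundary integrals. Since the bulk equation holds classically in $B_1^+$ (see Remark \ref{rmk:smoothness}), we can treat the interior computations pointwise; we work for a.e.\ $r \in (0, R_0)$ so that Fubini/coarea arguments apply to surface integrals on $S_r^+$.

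For \eqref{eq:R-N_int_th_1}, I would integrate the pointwise identity of Lemma \ref{lemma:R-N} with $z=(x,y)$ over $B_r^+$. The term $-2(\nabla u \cdot z)\dive(y^a \nabla u)$ drops because $u$ is $a$-harmonic in $B_r^+$, leaving $(N+a-1)\int_{B_r^+} y^a|\nabla u|^2$ on the right. The divergence theorem on the left produces a flux across $S_r^+ \cup B_r'$. On $S_r^+$ the outer normal is $z/r$, yielding $r\int_{S_r^+}y^a|\nabla u|^2 \ds - 2r \int_{S_r^+} y^a(\partial u/\partial \nnu)^2 \ds$. On $B_r'$ the normal is $-e_{N+1}$, and the contribution $-y^{a+1}|\nabla u|^2$ vanishes in the limit $y \to 0^+$ since $a+1 > 0$; the remaining term $2y^a(\nabla u\cdot z)\partial_y u$ converges to $-2(\nabla_x u \cdot x)\bigl[\lambda_-(u^-)^{p-1} - \lambda_+(u^+)^{p-1}\bigr]$, using that $y\partial_y u = y^{1-a}\cdot y^a\partial_y u \to 0$ at the boundary. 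Recognising $\lambda_-(u^-)^{p-1} - \lambda_+(u^+)^{p-1} = -\tfrac{1}{p}\partial_{u} F(u)$, this flux becomes $\tfrac{2}{p}\int_{B_r'} x \cdot \nabla_x F(u)\,dx$, and a standard $N$-dimensional integration by parts converts it into $\tfrac{2}{p}\bigl[r\int_{\partial B_r'} F(u)\,d\sigma - N\int_{B_r'} F(u)\,dx\bigr]$. Collecting all terms, dividing by $r^{N+a}$, and rewriting $\tfrac{N+a-1}{r}D + \tfrac{2N}{pr}G = \tfrac{N+a-1}{r}\tilde D + \tfrac{2(1-a)}{pr}G$ yields the claimed formula.

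For \eqref{eq:R-N_int_th_2}, I would integrate the algebraic identity $\dive(y^a u \nabla u) = y^a|\nabla u|^2 + u\dive(y^a\nabla u) = y^a|\nabla u|^2$ over $B_r^+$ and apply the divergence theorem. On $S_r^+$ we pick up $\int_{S_r^+} y^a u (\partial u/\partial \nnu)\,dS$. On $B_r'$ the integrand is $-y^a u\partial_y u$, whose limit as $y\to 0^+$ equals $u\bigl[\lambda_-(u^-)^{p-1} - \lambda_+(u^+)^{p-1}\bigr]$; since $u(u^-)^{p-1} = -(u^-)^p$ and $u(u^+)^{p-1} = (u^+)^p$ pointwise, this equals $-F(u)$. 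Rearranging gives \eqref{eq:R-N_int_th_2}.

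The main obstacle is rigorously justifying the boundary limits on $B_r'$ under the integral sign, together with the initial step of turning the integration-by-parts into a literal application of the divergence theorem when $u$ is only of Hölder class up to $B_1'$. This is where Lemma \ref{lemma:high_reg} is essential: it provides uniform Hölder bounds on $\nabla_x u$ and on $y^a \partial_y u$ in $B_{R_0}^+$, so that dominated convergence applies to the layer $\{0 < y < \varepsilon\}$ as $\varepsilon \to 0^+$, and no boundary defect is left. If greater care is needed, one can first carry out the identity on the shifted domain $B_r^+ \cap \{y > \varepsilon\}$ (where $u$ is smooth), and then send $\varepsilon \to 0^+$, controlling the extra flux on $\{y=\varepsilon\}$ by the Hölder estimates of Lemma \ref{lemma:high_reg}.
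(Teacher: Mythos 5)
Your proof is correct and follows essentially the same route as the paper: integrate the Rellich–Ne\v{c}as identity from Lemma~\ref{lemma:R-N} over $B_r^+$, use the $a$-harmonicity and the Neumann boundary condition, then integrate by parts in $B_r'$ for \eqref{eq:R-N_int_th_1}; and integrate $\dive(y^a u\nabla u)=y^a\abs{\nabla u}^2$ (i.e.\ test the equation against $u$) for \eqref{eq:R-N_int_th_2}. The extra care you devote to the $\varepsilon$-truncation and the vanishing of the terms $y^{a+1}\abs{\nabla u}^2$ and $y^{1-a}(y^a\partial_y u)^2$ at $\{y=0\}$, via the regularity from Lemma~\ref{lemma:high_reg}, is exactly what the paper outsources to a reference and is a sound way to make the sketch rigorous.
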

\begin{proof}
	Since similar computations can be found in literature, we give a sketch of the proof and we refer to, among others, \cite[Theorem 3.7]{Fall2014}  for a precise justification. In order to prove \eqref{eq:R-N_int_th_1} we first integrate \eqref{identity1} over $B_r^+$, which, taking into account the equation \eqref{eq:strong_form_2} satisfied by $u$, gives
	\begin{multline}\label{eq:R-N_1}
		(N+a-1)\int_{B_r^+}y^a\abs{\nabla u}^2\dxdy \\ =r\int_{S_r^+}y^a\abs{\nabla u}^2\ds-2r\int_{S_r^+}y^a\left(\frac{\partial u}{\partial\nnu}\right)^2\ds-2\int_{B_r'}\left(\lambda_-(u^-)^{p-1}-\lambda_+(u^+)^{p-1}\right)(\nabla_x u\cdot x)\dx.
	\end{multline}
	 Now, integrating by parts in $B_r'$ and using the identity
	\[
		(u^{\pm})^{p-1}\nabla_x u^{\pm}\cdot x=\frac{1}{p}\nabla_x(u^{\pm})^p\cdot x
	\]
	we obtain the following expression for the last term in \eqref{eq:R-N_1}
	\[
		\int_{B_r'}\left(\lambda_-(u^-)^{p-1}-\lambda_+(u^+)^{p-1}\right)(\nabla_x u\cdot x)\dx=-\frac{r}{p}\int_{\partial B_r'}F(u)\dsi +\frac{N}{p}\int_{B_r'}F(u)\dx,
	\]
	where $F$ is as in \eqref{eq:def_F_G}. Combining the previous identity with \eqref{eq:R-N_1} and reorganizing the terms yields \eqref{eq:R-N_int_th_1}. Finally, \eqref{eq:R-N_int_th_2} comes from multiplying \eqref{eq:strong_form_2} by $u$ and integrating over $B_r^+$.
\end{proof}

\begin{lemma}\label{lemma:well_pos_H}
	We have that $H(r)>0$ for all $r\in (0,R_0)$.
\end{lemma}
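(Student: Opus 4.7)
The plan is to proceed by contradiction. Suppose there exists $r_0\in(0,R_0)$ with $H(r_0)=0$. Since $y^a>0$ on the open upper hemisphere $S_{r_0}^+$ and $u$ is continuous up to $\{y=0\}$ by Remark~\ref{rmk:smoothness} (and smooth in the interior), the vanishing of the weighted $L^2$-integral pins down $u\equiv 0$ pointwise on $S_{r_0}^+$.

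Next, I would feed this information into the Pohozaev/Rellich-type identity \eqref{eq:R-N_int_th_2} of Lemma~\ref{lemma:R-N_int} on the half-ball $B_{r_0}^+$: the boundary integral on $S_{r_0}^+$ vanishes, yielding
\[
0=\int_{B_{r_0}^+}y^a\abs{\nabla u}^2\dxdy+\int_{B_{r_0}'}\bigl(\lambda_-(u^-)^p+\lambda_+(u^+)^p\bigr)\dx.
\]
Both summands being nonnegative, this forces $\nabla u\equiv 0$ on $B_{r_0}^+$; by connectedness, $u$ is constant on $B_{r_0}^+$, and matching the value $0$ on $S_{r_0}^+$ gives $u\equiv 0$ on $B_{r_0}^+$.

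To reach a contradiction with the implicit nontriviality hypothesis $g\not\equiv 0$ on $S_1^+$ (the same assumption as in Lemma~\ref{lemma:str_max_princ}; without it, the conclusion would fail since $u\equiv 0$ solves \eqref{eq:strong_form_2} when $g\equiv 0$), I would invoke interior unique continuation. Inside $B_1^+\cap\{y>0\}$, the coefficient $y^a$ is smooth and uniformly elliptic, and by Remark~\ref{rmk:smoothness}, $u\in C^\infty(B_1^+)$ satisfies the linear equation $-\dive(y^a\nabla u)=0$ with real-analytic coefficients. Hence $u$ is real analytic on the connected set $B_1^+\cap\{y>0\}$, and its vanishing on the nonempty open subset $B_{r_0}^+\cap\{y>0\}$ propagates to all of $B_1^+\cap\{y>0\}$. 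By continuity up to $\overline{B_1^+}$, this forces $g\equiv 0$ on $S_1^+$, which is the desired contradiction.

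The main obstacle I anticipate is the final analytic-continuation step: one must invoke the interior smoothness of $u$ (as recorded in Remark~\ref{rmk:smoothness}) together with classical real-analytic regularity for solutions to linear elliptic PDEs with analytic coefficients in order to legitimately propagate the vanishing from $B_{r_0}^+$ to the whole of $B_1^+$. The rest of the argument is a clean use of the identity \eqref{eq:R-N_int_th_2}.
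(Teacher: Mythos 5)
Your proof is correct and follows essentially the same route as the paper's: vanishing of $H(\bar r)$ combined with the integral identity \eqref{eq:R-N_int_th_2} forces $u\equiv 0$ on $B_{\bar r}^+$, and then a unique continuation argument propagates this to a global contradiction. The paper simply cites an external unique continuation theorem (Simon) in place of your interior real-analyticity argument in $\{y>0\}$, and leaves implicit the nontriviality hypothesis $g\not\equiv 0$ (equivalently $u\not\equiv 0$) that you correctly observed is necessary for the statement to hold.
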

\begin{proof}
	If we assume $H(\bar{r})=0$ for a certain $\bar{r}\in(0,R_0)$, then we reach a contradiction thanks to \eqref{eq:R-N_int_th_2} and unique continuation principle (see e.g. \cite[Theorem 7.1]{Simon1980a}).	
\end{proof}

In the following lemma we state the regularity of the function $H$ and we compute its derivative.

\begin{lemma}\label{lemma:H'}
	We have that $H\in C^1(0,R_0)$ and
	\begin{align}
		H'(r)&=\frac{2}{r^{N+a}}\int_{S_r^+}y^a u\,\frac{\partial u}{\partial \nnu}\ds \label{eq:H'_th_1}\\
		&=\frac{2}{r}\left[\tilde{D}(r)+\left(1-\frac{2}{p}\right)G(r)\right]\geq 0, \label{eq:H'_th_2}
	\end{align}
	for all $r\in(0,R_0)$.
\end{lemma}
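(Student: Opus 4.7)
The plan is to compute $H'(r)$ directly via a change of variables to the fixed hemisphere $S_1^+$, and then collapse the resulting boundary integral using the integration-by-parts identity \eqref{eq:R-N_int_th_2} already established in Lemma \ref{lemma:R-N_int}. This turns a purely geometric computation into an algebraic manipulation involving $D$, $G$, and $\tilde{D}$.

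For the regularity and the explicit formula \eqref{eq:H'_th_1}, I parametrize $S_r^+$ as $r\omega$ with $\omega \in S_1^+$. The surface measure scales as $\d S_z = r^N \d S_\omega$ and $y = r\omega_y$, so after pulling out the powers of $r$,
\[
H(r) = \int_{S_1^+} \omega_y^a \, u(r\omega)^2 \, \d S_\omega.
\]
This representation has the virtue that the domain of integration no longer depends on $r$. The weight $\omega_y^a$ belongs to $L^1(S_1^+)$ (it is an $A_2$-weight), and by Lemma \ref{lemma:regularity} together with Lemma \ref{lemma:high_reg} and Remark \ref{rmk:smoothness} the products $u\, \nabla_x u$ and $u\, y^a \partial_y u$ are uniformly bounded on $\overline{B_{R_0}^+}$. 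Dominated convergence then allows differentiation under the integral sign, and because $\omega$ is the outward unit normal on $S_1^+$, we recover $\nabla u(r\omega)\cdot\omega = \partial u/\partial \nnu(r\omega)$. Undoing the change of variables yields \eqref{eq:H'_th_1} and simultaneously shows $H \in C^1(0, R_0)$.

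For the second form \eqref{eq:H'_th_2}, I substitute \eqref{eq:R-N_int_th_2} into \eqref{eq:H'_th_1}. Since $\int_{B_r^+} y^a |\nabla u|^2 \, \dxdy = r^{N+a-1} D(r)$ and $\int_{B_r'} F(u) \, \dx = r^{N+a-1} G(r)$, the right-hand side of \eqref{eq:H'_th_1} becomes
\[
H'(r) = \frac{2}{r}\bigl[D(r) + G(r)\bigr].
\]
Now using the definition $\tilde{D}(r) = D(r) + \frac{2}{p} G(r)$ to eliminate $D$ in favor of $\tilde{D}$, a one-line rearrangement gives
\[
H'(r) = \frac{2}{r}\left[\tilde{D}(r) + \left(1 - \frac{2}{p}\right) G(r)\right].
\]
Nonnegativity is then immediate, since $p \geq 2$ makes the coefficient $1 - 2/p \geq 0$, while $\tilde D$ and $G$ are manifestly nonnegative.

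The only step I expect to require genuine care is the differentiation under the integral sign at the possibly singular boundary $\{y=0\}$ in the case $a<0$. A clean way to handle this is to first verify the formula on the shells $\{\omega_y > \varepsilon\}\cap S_1^+$, where the weight is bounded and smoothness of $u$ in the interior makes differentiation automatic, and then pass to the limit $\varepsilon\to 0^+$ using $\omega_y^a \in L^1(S_1^+)$ as an integrable dominator together with the Hölder control on $y^a \partial_y u$ from Lemma \ref{lemma:high_reg}. Everything else is bookkeeping of powers of $r$.
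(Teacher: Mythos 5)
Your proposal is correct and follows essentially the same route as the paper: change variables to pull the integral onto the fixed hemisphere $S_1^+$, differentiate under the integral sign using the regularity from Lemma~\ref{lemma:high_reg} together with integrability of the $A_2$ weight to justify dominated convergence, and then substitute identity \eqref{eq:R-N_int_th_2} and rearrange via $\tilde{D}=D+\tfrac{2}{p}G$ to get the second form. The extra $\varepsilon$-shell step you flag for $a<0$ is a fine precaution but not strictly needed, since (as in the paper) the mean-value bound plus boundedness of $u$, $\nabla_x u$, $y^a\partial_y u$ on the compact annulus already yields an integrable dominator proportional to $\omega_y^a$; the algebraic rearrangement and the nonnegativity from $p\geq 2$ are exactly as the paper argues.
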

\begin{proof}
	Let us fix $r_0\in(0,R_0)$ and let us examine the limit
	\[
		\lim_{r\to r_0}\frac{H(r)-H(r_0)}{r-r_0}=\lim_{r\to r_0}\int_{S_1^+}y^a\frac{u^2(r z)-u^2(r_0 z)}{r-r_0}\ds(z),
	\]
	where $z=(x,y)\in S_1^+$. Since $u\in C^\infty(B_1^+)$ (see Remark \ref{rmk:smoothness}) we know that
	\[
		\lim_{r\to r_0}\frac{u^2(r z)-u^2(r_0 z)}{r-r_0}=2u(r_0 z)\nabla u(r_0 z)\cdot z.
	\]
	In order to pass to the limit under the integral sign, we make use of Lebesgue Dominated Convergence Theorem. In fact, from Lagrange Theorem and the regularity of the solution (see Lemma \ref{lemma:high_reg}) we derive that, for all $z\in S_1^+$ and $r\in (r_0/2,R_0)$,
	\[
		\left|\frac{u^2(r z)-u^2(r_0 z)}{r-r_0}\right|\leq C\sup_{B_{R_0}^+\setminus B_{r_0/2}^+}\abs{u}\bigg[\sup_{B_{R_0}^+\setminus B_{r_0/2}^+}\abs{\nabla_x u}+\sup_{B_{R_0}^+\setminus B_{r_0/2}^+}\left|y^a\frac{\partial u}{\partial y}\right|  \bigg],
	\]
	with $C>0$ depending on $R_0$ and $a$. We also recall that the weight $y^a$ is integrable, which allows us to conclude the proof of \eqref{eq:H'_th_1}. Finally, \eqref{eq:H'_th_2} is a consequence of \eqref{eq:H'_th_1}, \eqref{eq:R-N_int_th_2} and the definition of $\tilde{D}$ \eqref{eq:def_D_H}. The fact that $H'(r)\geq 0$ follows from the fact that $D$ and $G$ are nonnegative. The proof is thereby complete.

\end{proof}


\begin{lemma}\label{lemma:D'}
	We have that $\tilde{D}\in W^{1,1}_{\textup{loc}}(0,R_0)$ and
	\[
		\tilde{D}'(r)=\frac{2}{r^{N+a-1}}\int_{S_r^+}y^a\left(\frac{\partial u}{\partial \nnu}\right)^2\ds+\frac{2(1-a)}{pr}G(r)
	\]
	in a distributional sense and for a.e. $r\in(0,R_0)$.
\end{lemma}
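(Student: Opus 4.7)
The plan is to compute $\tilde{D}'(r)$ by direct differentiation of its definition, and then to substitute the Rellich--Nečas-type identity from Lemma \ref{lemma:R-N_int} to absorb the boundary gradient term. Specifically, writing
\[
\tilde D(r) = \frac{1}{r^{N+a-1}}\Bigl[\int_{B_r^+} y^a \abs{\nabla u}^2 \dxdy + \tfrac{2}{p}\int_{B_r'} F(u)\dx\Bigr],
\]
the first step is to note that the two bracketed integrals are absolutely continuous functions of $r$ on $(0,R_0)$. This follows from the coarea formula together with the local integrability of $y^a\abs{\nabla u}^2$ and $F(u)$, both of which are guaranteed by $u\in H^{1,a}(B_1^+)\cap L^\infty(B_1^+)$ (Lemma \ref{lemma:weak_max_princ}). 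In particular, for a.e.\ $r$ these functions are differentiable with derivatives $\int_{S_r^+} y^a\abs{\nabla u}^2\ds$ and $\int_{\partial B_r'} F(u)\dsi$ respectively; combined with the smooth factor $r^{-(N+a-1)}$ this gives $\tilde D \in W^{1,1}_{\mathrm{loc}}(0,R_0)$.

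Next, I would differentiate, obtaining for a.e.\ $r\in(0,R_0)$
\begin{equation*}
\tilde D'(r) = -\frac{N+a-1}{r}\tilde D(r) + \frac{1}{r^{N+a-1}}\int_{S_r^+} y^a\abs{\nabla u}^2\ds + \frac{2}{pr^{N+a-1}}\int_{\partial B_r'} F(u)\dsi.
\end{equation*}
At this point the key step is to replace the first boundary integral using identity \eqref{eq:R-N_int_th_1} of Lemma \ref{lemma:R-N_int}. That substitution introduces precisely the radial derivative square term $\frac{2}{r^{N+a-1}}\int_{S_r^+} y^a\bigl(\tfrac{\partial u}{\partial\nnu}\bigr)^2 \ds$, a term $\frac{N+a-1}{r}\tilde D(r)$ that cancels the analogous term from the differentiation above, a term $-\frac{2}{p r^{N+a-1}}\int_{\partial B_r'} F(u)\dsi$ that cancels the remaining boundary integral of $F(u)$, and finally the extra contribution $\frac{2(1-a)}{pr}G(r)$. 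What remains after these cancellations is exactly
\[
\tilde D'(r) = \frac{2}{r^{N+a-1}}\int_{S_r^+} y^a\Bigl(\frac{\partial u}{\partial\nnu}\Bigr)^2\ds + \frac{2(1-a)}{pr}G(r),
\]
which is the asserted formula, valid in the distributional sense on $(0,R_0)$ by the $W^{1,1}_{\mathrm{loc}}$ regularity.

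The only delicate point is the justification of the coarea-based differentiation of the volume integrals, since $\abs{\nabla u}^2$ is only known to be in $L^1(B_1^+;y^a)$ and $F(u)$ only in $L^1(B_1')$; thus the formula holds merely for a.e.\ $r$, which is why the statement is phrased in terms of $W^{1,1}_{\mathrm{loc}}$. All cancellations are purely algebraic once \eqref{eq:R-N_int_th_1} is in hand, so no additional estimates are needed.
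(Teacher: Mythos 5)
Your proposal is correct and follows essentially the same route as the paper: both compute $\tilde D'(r)$ by the coarea formula and product rule on $r^{-(N+a-1)}E(r)$ (with $E(r)$ the bracketed sum), then substitute the Rellich–Nečas identity \eqref{eq:R-N_int_th_1} of Lemma \ref{lemma:R-N_int} to produce the cancellations. The algebra you describe is exactly what the paper's terse ``this, together with identity \eqref{eq:R-N_int_th_1}, yields the thesis'' leaves implicit.
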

\begin{proof}
	For any $r\in(0,R_0)$ we let
		\[
			E(r):=\int_{B_r^+}y^a\abs{\nabla u}^2\dxdy+\frac{2}{p}\int_{B_r'}F(u)\dx,
		\]
		so that
		\[
			\tilde{D}(r)=\frac{1}{r^{N+a-1}}E(r).
		\]
		First of all, it is easy to verify that $E\in L^1(0,R_0)$. Secondly, we compute its distributional derivative. By the coarea formula we have
		\begin{equation*}
			E'(r)=\int_{S_r^+}y^a\abs{\nabla u}^2\ds+\frac{2}{p}\int_{\partial B_r'}F(u)\dsi,
		\end{equation*}
		where $\dsi$ denotes the surface $N-1$ dimensional measure of $\R^N$. Again, it is easy to see that $E'\in L^1(0,R_0)$, thus implying that $E\in W^{1,1}(0,R_0)$. Now, by definition we have that $\tilde{D}\in W^{1,1}_{\textup{loc}}(0,R_0)$ and
		\[
			\tilde{D}'(r)=r^{-(N+a)}[rE'(r)-(N+a-1)E(r)].
		\]
		This, together with identity \eqref{eq:R-N_int_th_1}, yields the thesis.
\end{proof}

\begin{theorem}\label{thm:monot_N_tilde}
	The perturbed Almgren frequency function $\tilde{\mathcal{N}}$ belongs to $W^{1,1}_{\textup{loc}}(0,R_0)$ and it is nondecreasing in $(0,R_0)$, i.e. $\tilde{\mathcal{N}}'(r)\geq 0$ for all $r\in (0,R_0)$ in a distributional sense. In particular there exists
	\begin{equation*}
		k:=\lim_{r\to 0^+}\tilde{\mathcal{N}}(r)\in [0,\infty).
	\end{equation*}
\end{theorem}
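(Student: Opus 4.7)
The plan is to compute $\tilde{\mathcal{N}}'(r)$ by the quotient rule and show the resulting numerator is nonnegative via the Cauchy--Schwarz inequality on $S_r^+$ (with the Muckenhoupt weight $y^a$), combined with the Rellich--Pohozaev identity \eqref{eq:R-N_int_th_2}. The first step is to observe that Lemma \ref{lemma:well_pos_H} gives $H(r)>0$ on $(0,R_0)$, Lemma \ref{lemma:H'} gives $H\in C^1(0,R_0)$, and Lemma \ref{lemma:D'} gives $\tilde{D}\in W^{1,1}_{\textup{loc}}(0,R_0)$; hence $\tilde{\mathcal{N}}=\tilde{D}/H\in W^{1,1}_{\textup{loc}}(0,R_0)$ and the quotient rule applies in the distributional sense.

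Next I would compute
\[
H(r)^2\,\tilde{\mathcal{N}}'(r)=\tilde{D}'(r)H(r)-\tilde{D}(r)H'(r).
\]
Plugging in \eqref{eq:H'_th_2} yields $\tilde{D}(r)H'(r)=\tfrac{2}{r}\tilde{D}(r)[\tilde{D}(r)+(1-\tfrac{2}{p})G(r)]$, and plugging in the formula from Lemma \ref{lemma:D'} yields
\[
\tilde{D}'(r)H(r)=\frac{2}{r^{N+a-1}}H(r)\int_{S_r^+}y^a\!\left(\tfrac{\partial u}{\partial\nnu}\right)^{\!2}\!dS+\frac{2(1-a)}{pr}G(r)H(r).
\]
The crucial step is to bound the first term from below. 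Using Cauchy--Schwarz on $S_r^+$ with weight $y^a$,
\[
\int_{S_r^+}y^a\!\left(\tfrac{\partial u}{\partial\nnu}\right)^{\!2}\!dS\cdot\int_{S_r^+}y^a u^2\,dS\geq\left(\int_{S_r^+}y^a u\,\tfrac{\partial u}{\partial\nnu}\,dS\right)^{\!2},
\]
and rewriting the right-hand side by means of \eqref{eq:R-N_int_th_2} as $r^{2(N+a-1)}[\tilde{D}(r)+(1-\tfrac{2}{p})G(r)]^2$, I obtain
\[
\frac{2}{r^{N+a-1}}H(r)\int_{S_r^+}y^a\!\left(\tfrac{\partial u}{\partial\nnu}\right)^{\!2}\!dS\geq \frac{2}{r}\left[\tilde{D}(r)+\left(1-\tfrac{2}{p}\right)G(r)\right]^{2}.
\]

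Substituting this lower bound into the expression for $H(r)^2\tilde{\mathcal{N}}'(r)$ and simplifying using the algebraic identity $[\tilde{D}+(1-\tfrac{2}{p})G]^2-\tilde{D}[\tilde{D}+(1-\tfrac{2}{p})G]=(1-\tfrac{2}{p})G[\tilde{D}+(1-\tfrac{2}{p})G]$, I arrive at
\[
H(r)^2\tilde{\mathcal{N}}'(r)\geq \frac{2}{r}\left(1-\tfrac{2}{p}\right)G(r)\!\left[\tilde{D}(r)+\left(1-\tfrac{2}{p}\right)G(r)\right]+\frac{2(1-a)}{pr}G(r)H(r).
\]
Since $p\geq 2$ (so $1-\tfrac{2}{p}\geq 0$), $a\in(-1,1)$ (so $1-a>0$), and $\tilde{D},G,H\geq 0$ with $H>0$, the right-hand side is nonnegative. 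This proves $\tilde{\mathcal{N}}'\geq 0$ distributionally, i.e.\ $\tilde{\mathcal{N}}$ is nondecreasing on $(0,R_0)$. Since $\tilde{\mathcal{N}}\geq 0$, the monotone limit $k:=\lim_{r\to 0^+}\tilde{\mathcal{N}}(r)=\inf_{r\in(0,R_0)}\tilde{\mathcal{N}}(r)$ exists and lies in $[0,\infty)$.

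The main obstacle, conceptually, is the perturbation term $G(r)$: the classical Almgren argument would give a perfect-square numerator via Cauchy--Schwarz alone, but here the nonlinear boundary term forces one to track the auxiliary nonnegative contribution $\tfrac{2(1-a)}{pr}G(r)H(r)$ coming from $\tilde{D}'$. The sign of this remainder, together with the assumption $p\geq 2$, is precisely what makes monotonicity (rather than just an almost-monotonicity up to an integrable error) work, and it is why the perturbed frequency $\tilde{\mathcal{N}}$ is preferable to the plain $\mathcal{N}$ for this problem.
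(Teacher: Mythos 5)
Your proof is correct and follows essentially the same route as the paper: both compute $H^2\tilde{\mathcal{N}}'=\tilde{D}'H-\tilde{D}H'$ via Lemmas \ref{lemma:H'} and \ref{lemma:D'}, apply the weighted Cauchy--Schwarz inequality on $S_r^+$ to the pair $(u,\partial u/\partial\nnu)$, use the Rellich--Pohozaev/integration-by-parts identity to express $\int_{S_r^+}y^a u\,\partial_\nnu u\,dS$ in terms of $\tilde{D}$ and $G$, and conclude from $p\geq 2$ and $a<1$ that the remaining $G$-terms are nonnegative. The only cosmetic difference is that the paper keeps $H'$ in the final expression while you expand it via \eqref{eq:H'_th_2}; the resulting lower bound is identical.
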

\begin{proof}
	Combining Lemma \ref{lemma:H'} and Lemma \ref{lemma:D'} we have that
	\begin{align*}
		H^2(r)\tilde{\mathcal{N}}'(r)&=\tilde{D}'(r)H(r)-\tilde{D}(r)H'(r) \\
		&=\frac{2}{r^{2N+2a-1}}\left(\int_{S_r^+}y^a\left(\frac{\partial u}{\partial\nnu}\right)^2\ds\right)\left(\int_{S_r^+}y^au^2\ds\right)+\frac{2(1-a)}{pr}G(r)H(r)\\
		&-\frac{r}{2}(H'(r))^2+\frac{r}{2}\left(1-\frac{2}{p}\right)G(r)H'(r) \\
		&= \frac{2}{r^{2N+2a-1}}\left[\left(\int_{S_r^+}y^a\left(\frac{\partial u}{\partial\nnu}\right)^2\ds\right)\left(\int_{S_r^+}y^au^2\ds\right)-\left(\int_{S_r^+}y^au\,\frac{\partial u}{\partial\nnu}\ds \right)^2  \right] \\
		&+\frac{2(1-a)}{pr}G(r)H(r)+\left(1-\frac{2}{p}\right)G(r)H'(r)\geq 0,
	\end{align*}
	where the last inequality follows from Cauchy-Schwartz inequality, Lemma \ref{lemma:H'} and the assumption $a\in (-1,1)$.
\end{proof}

We now state a trace inequality on half-balls. The proof is rather standard, but we include it for the sake of completeness.

\begin{lemma}\label{lemma:trace_ineq}
	There exists a constant $C_1=C_1(N,a)>0$ such that
	\[
		\int_{B_r'}v^2\dx\leq C_1\left( r^{1-a}\int_{B_r^+}y^a\abs{\nabla v}^2\dxdy+r^{-a}\int_{S_r^+}y^a v^2\ds \right)
	\]
	for all $v\in H^{1,a}(B_r^+)$ and for all $r>0$.
\end{lemma}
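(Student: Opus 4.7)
The plan is to reduce to the case $r=1$ by a scaling argument and then to deduce the inequality from the standard weighted trace embedding of $H^{1,a}(B_1^+)$ into $L^2(B_1')$ combined with the Poincar\'e inequality of Lemma \ref{lemma:poincare}.

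First, I would set $\tilde v(z):=v(rz)$ for $z\in B_1^+$. Elementary changes of variables give
\[
\int_{B_r'}v^2\dx=r^N\int_{B_1'}\tilde v^2\dx,\qquad \int_{B_r^+}y^a\abs{\nabla v}^2\dxdy=r^{N+a-1}\int_{B_1^+}y^a\abs{\nabla\tilde v}^2\dxdy,
\]
\[
\int_{S_r^+}y^av^2\ds=r^{N+a}\int_{S_1^+}y^a\tilde v^2\ds.
\]
Since $r^{1-a}\cdot r^{N+a-1}=r^{-a}\cdot r^{N+a}=r^N$, both sides of the claimed estimate scale as $r^N$, so the inequality at radius $r$ is equivalent to the same inequality applied to $\tilde v$ at radius $1$. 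I am therefore reduced to proving
\[
\int_{B_1'}v^2\dx\leq C_1\Bigl(\int_{B_1^+}y^a\abs{\nabla v}^2\dxdy+\int_{S_1^+}y^av^2\ds\Bigr).
\]

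Since $a\in(-1,1)$, the weight $|y|^a$ (after even reflection across $\{y=0\}$) belongs to the Muckenhoupt class $A_2(\R^{N+1})$ and the associated weighted Sobolev space admits a continuous trace onto the flat portion $B_1'$ of the boundary (this embedding is already invoked in the proof of Proposition \ref{prop:exist_unique}; see also \cite{Fabes1982} and the references therein). This produces $\tilde C=\tilde C(N,a)>0$ with
\[
\int_{B_1'}v^2\dx\leq \tilde C\int_{B_1^+}y^a(v^2+\abs{\nabla v}^2)\dxdy.
\]
On the other hand, Lemma \ref{lemma:poincare} at $r=1$ yields
\[
(N+a)\int_{B_1^+}y^av^2\dxdy\leq \int_{B_1^+}y^a\abs{\nabla v}^2\dxdy+\int_{S_1^+}y^av^2\ds,
\]
and since $N+a>0$ I can substitute this into the preceding display to close the case $r=1$; unscaling then produces the general estimate with a constant depending only on $N$ and $a$.

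The only genuine difficulty is the weighted trace embedding itself. For a self-contained derivation one would start from the identity $v(x,0)^2=v(x,y)^2-2\int_0^y v\,v_s\,ds$ (valid for $v\in C^\infty(\overline{B_1^+})$), apply Cauchy--Schwarz to $\int_0^y v_s\,ds$ with the weight $s^{-a}$---whose integral over $(0,y)$ equals $y^{1-a}/(1-a)$ precisely because $a<1$---then multiply by a suitable weight in $y$ and integrate using Fubini. The book-keeping technicality is that the vertical fiber above $x\in B_1'$ has length $\sqrt{1-\abs{x}^2}\to 0$ as $\abs{x}\to 1$; this can be handled by treating a concentric smaller half-ball separately from an annular neighborhood of $\partial B_1'$, on which one integrates along tilted cones anchored on $S_1^+$. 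Density of $C^\infty(\overline{B_1^+})$ in $H^{1,a}(B_1^+)$ then removes the smoothness assumption.
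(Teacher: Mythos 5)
Your proof is correct and follows essentially the same route as the paper: reduce to $r=1$ by scaling, invoke the weighted trace embedding $H^{1,a}(B_1^+)\hookrightarrow L^2(B_1')$, and eliminate the bulk $L^{2,a}$ term via the Poincar\'e-type estimate. The only cosmetic difference is that the paper re-derives the inequality $(N+a)\int_{B_1^+}y^av^2\leq\int_{B_1^+}y^a|\nabla v|^2+\int_{S_1^+}y^av^2$ from the divergence identity inside the proof, whereas you simply cite Lemma \ref{lemma:poincare} at $r=1$, which is the cleaner choice since that lemma was proved by exactly the same computation.
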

\begin{proof}
	We provide the proof only for $v\in C^\infty(\overline{B_r^+})$ and conclude by standard density arguments. From the embedding $H^{1,a}(B_1^+)\hookrightarrow L^2(B_1')$ (see \cite[Theorem 2.3]{Nekvinda1993}) we know that there exists a constant $c=c(N,a)>0$ such that
	\begin{equation}\label{eq:trace_1}
		\int_{B_1'}v^2\dx\leq c\left( \int_{B_1^+}y^a\abs{\nabla v}^2\dxdy+\int_{B_1^+}y^a v^2\dxdy \right).
	\end{equation}
	We now prove that
	\begin{equation}\label{eq:trace_2}
		(N+a)\int_{B_1^+}y^av^2\dxdy\leq \int_{B_1^+}y^a\abs{\nabla v}^2\dxdy+\int_{S_1^+}y^a v^2\ds.
	\end{equation}
	By integration of the identity
	\[
		\dive(y^a v^2 z)=(N+a+1)y^av^2+2y^av\nabla v\cdot z
	\]
	in $B_1^+\cap \{(x,y)\colon y\geq \delta\}$, where $z=(x,y)$, for some small $\delta>0$, we obtain
	\begin{multline*}
		\int_{S_1^+\cap \{y\geq \delta\}}y^a v^2\ds+\delta^{1+a}\int_{B_1^+\cap\{y\geq \delta\}}v^2 \\
		=(N+a+1)\int_{B_1^+\cap\{y\geq \delta\}} y^av^2\dxdy+2\int_{B_1^+\cap\{y\geq \delta\}}y^av\nabla v\cdot z\dxdy.
	\end{multline*}
	Letting $\delta\to 0^+$, this implies
	\[
		\int_{S_1^+}y^a v^2\ds =(N+a+1)\int_{B_1^+} y^av^2\dxdy+2\int_{B_1^+}y^av\nabla v\cdot z\dxdy.
	\]
	Now, applying Young's inequality to the last term, using the fact that $\abs{z}\leq 1$ and rearranging the terms, we reach \eqref{eq:trace_2}. At this point, a combination of \eqref{eq:trace_1} and \eqref{eq:trace_2} yields
	\[
		\int_{B_1'}v^2\dx\leq \tilde{c}\left( \int_{B_1^+}y^a\abs{\nabla v}^2\dxdy+\int_{S_1^+}y^a v^2\ds \right),
	\]
	for some $\tilde{c}=\tilde{c}(N,a)>0$. By a scaling argument we may easily conclude the proof.
	
\end{proof}

We obtain as a consequence that the ``unperturbed'' frequency function $\mathcal{N}$ has a limit as $r \to 0^+$.

\begin{proposition}\label{prop:lim_N}
	Let $k\in[0,\infty)$ be as in Theorem \ref{thm:monot_N_tilde}. Then there exists the limit of $\mathcal{N}(r)$ as $r \to 0^+$ and $\lim_{r\to 0^+}\mathcal{N}(r)=k$.
\end{proposition}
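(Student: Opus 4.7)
The plan is to observe that by construction
\[
\tilde{\mathcal{N}}(r)-\mathcal{N}(r)=\frac{2}{p}\,\frac{G(r)}{H(r)},
\]
so the statement is equivalent to showing $G(r)/H(r)\to 0$ as $r\to 0^+$. Since $\tilde{\mathcal{N}}$ has a finite limit by Theorem \ref{thm:monot_N_tilde}, $\tilde{\mathcal{N}}$ (and a fortiori $\mathcal{N}$, because both $G$ and $H$ are nonnegative and $H(r)>0$ by Lemma \ref{lemma:well_pos_H}) is bounded on a right neighborhood of $0$. Hence it is enough to produce a bound of the form $G(r)/H(r)\le C\,r^{\gamma}\,(\mathcal{N}(r)+1)$ with some $\gamma>0$.

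To get that bound, I would first use the $L^\infty$ estimate from Lemma \ref{lemma:weak_max_princ} to write, pointwise on $B_r'$,
\[
F(u)=\lambda_-(u^-)^p+\lambda_+(u^+)^p\le (\lambda_-+\lambda_+)\,\|u\|_{L^\infty(B_1^+)}^{\,p-2}\,u^2=:C_\ast\,u^2,
\]
which uses $p\ge 2$. Then the trace inequality of Lemma \ref{lemma:trace_ineq} applied to $u$ gives
\[
\int_{B_r'}u^2\dx\le C_1\!\left(r^{1-a}\!\int_{B_r^+}y^a|\nabla u|^2\dxdy+r^{-a}\!\int_{S_r^+}y^a u^2\ds\right)=C_1\,r^{N}\bigl(D(r)+H(r)\bigr),
\]
since $r^{1-a}\cdot r^{N+a-1}=r^{-a}\cdot r^{N+a}=r^N$ by the very normalizations in \eqref{eq:def_D_H}. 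Dividing by $r^{N+a-1}$ therefore yields
\[
G(r)\le \frac{C_\ast}{r^{N+a-1}}\int_{B_r'}u^2\dx\le C_\ast C_1\,r^{1-a}\bigl(D(r)+H(r)\bigr),
\]
and dividing by $H(r)>0$ produces the desired
\[
\frac{G(r)}{H(r)}\le C_\ast C_1\,r^{1-a}\bigl(\mathcal{N}(r)+1\bigr).
\]

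To close the argument, note that $\mathcal{N}(r)\le \tilde{\mathcal{N}}(r)$ and by Theorem \ref{thm:monot_N_tilde} the right-hand side converges to $k$ as $r\to 0^+$, so $\mathcal{N}$ is uniformly bounded near $0$; combined with the assumption $a\in(-1,1)$, which gives $1-a>0$ and thus $r^{1-a}\to 0$, this forces $G(r)/H(r)\to 0$. Consequently
\[
\mathcal{N}(r)=\tilde{\mathcal{N}}(r)-\frac{2}{p}\,\frac{G(r)}{H(r)}\longrightarrow k\qquad\text{as }r\to 0^+,
\]
which is the claim.

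I do not expect any serious obstacle: the only subtlety is producing a power of $r$ in front of $(\mathcal{N}(r)+1)$ rather than an $O(1)$ bound, and this is exactly what the weighted trace inequality of Lemma \ref{lemma:trace_ineq} supplies thanks to the matching scaling of the two terms on its right-hand side. The assumption $p\ge 2$ is used only to absorb $|u|^{p-2}$ into a constant via the $L^\infty$ bound.
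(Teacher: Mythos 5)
Your proof is correct and follows essentially the same route as the paper: both establish the key estimate $G(r)\le C\,r^{1-a}\bigl(D(r)+H(r)\bigr)$ via the $L^\infty$ bound (Lemma \ref{lemma:weak_max_princ}) and the scaled trace inequality (Lemma \ref{lemma:trace_ineq}), and then exploit the monotonicity/limit existence of $\tilde{\mathcal{N}}$ from Theorem \ref{thm:monot_N_tilde}. The only cosmetic difference is that you phrase the final step as showing $G(r)/H(r)\to 0$ directly, whereas the paper packages the same information into a sandwich inequality $\frac{\tilde{\mathcal{N}}(r)-Cr^{1-a}}{1+Cr^{1-a}}\le\mathcal{N}(r)\le\tilde{\mathcal{N}}(r)$.
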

\begin{proof}
	Let $r>0$ be sufficiently small. Trivially $\mathcal{N}(r)\leq \tilde{\mathcal{N}}(r)$. On the other hand, thanks to the boundedness of $u$, we have that
	\[
		\int_{B_r'}F(u)\dx\leq 2\max\{\lambda_-,\lambda_+\}\norm{u}_{L^\infty(B_1)}^{p-2}\int_{B_r'}u^2\dx.
	\]
	 Therefore, applying Lemma \ref{lemma:trace_ineq}, we deduce
	\begin{equation}\label{eq:lim_N_1}
		G(r)=\frac{1}{r^{N+a-1}}\int_{B_r'}F(u)\dx\leq C r^{1-a}(D(r)+H(r))
	\end{equation}
	 (with $C>0$ depending on $N,a,\lambda_\pm,p,\norm{u}_{L^\infty(B_1)},p$), which in turn implies
	 \[
	 	\tilde{\mathcal{N}}(r)\leq (1+Cr^{1-a})\mathcal{N}(r)+Cr^{1-a}.
	 \]
	 Hence
	 \[
	 	\frac{\tilde{\mathcal{N}}(r)-Cr^{1-a}}{1+C r^{1-a}}\leq \mathcal{N}(r)\leq \tilde{\mathcal{N}}(r)
	 \]
	 and the proof is thereby complete.
\end{proof}

\begin{corollary}\label{cor:bounds_H}
	Let $k=\lim_{r\to 0^+}\tilde{\mathcal{N}}(r)$. Then the following hold:
	\begin{enumerate}
		\item[(i)] there exists a constant $K_1>0$, depending on $N,a,\norm{u}_{L^\infty(B_1^+)},k$ such that
		\[
			H(r)\leq K_1 r^{2k}\quad\text{for all }r\in (0,R_0);
		\]
		\item[(ii)] for any $\delta>0$ there exists a constant $K_2=K_2(\delta)>0$, depending also on $N,a,u$ such that
		\[
			H(r)\geq K_2 r^{2k+\delta}\quad\text{for all }r\in (0,R_0).
		\]
	\end{enumerate}
\end{corollary}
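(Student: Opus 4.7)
The plan is to study the logarithmic derivative $H'/H$ and integrate it against the bounds on $\tilde{\mathcal{N}}$ provided by Theorem \ref{thm:monot_N_tilde} and Proposition \ref{prop:lim_N}. Lemma \ref{lemma:well_pos_H} gives $H>0$ on $(0,R_0)$ and Lemma \ref{lemma:H'} gives $H\in C^1(0,R_0)$ with
\[
\frac{H'(r)}{H(r)}=\frac{2}{r}\left[\tilde{\mathcal{N}}(r)+\left(1-\frac{2}{p}\right)\frac{G(r)}{H(r)}\right],
\]
so $\log H$ is absolutely continuous on $(0,R_0)$ and amenable to an ODE-type comparison.

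For (i), the monotonicity of $\tilde{\mathcal{N}}$ together with the limit $\tilde{\mathcal{N}}(0^+)=k$ gives $\tilde{\mathcal{N}}(r)\geq k$ on $(0,R_0)$. Since $p\geq 2$ the factor $1-2/p$ is nonnegative, and $G/H\geq 0$, so $H'(r)/H(r)\geq 2k/r$. Integrating from $r$ to $R_0$ yields $H(r)\leq H(R_0)\,(r/R_0)^{2k}$, and $H(R_0)$ is controlled by $\|u\|_{L^\infty(B_1^+)}^2\,\beta_{N,a}$ directly from the definition of $H$. Thus (i) holds with $K_1=\|u\|_{L^\infty(B_1^+)}^2\,\beta_{N,a}/R_0^{2k}$.

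For (ii), I would fix $\delta>0$ and use $\tilde{\mathcal{N}}(0^+)=k$ to choose $r_*\in(0,R_0)$ with $\tilde{\mathcal{N}}(r)\leq k+\delta/4$ on $(0,r_*]$. The perturbation $G/H$ is handled via estimate \eqref{eq:lim_N_1} combined with $\mathcal{N}\leq\tilde{\mathcal{N}}\leq k+\delta/4$: this gives $G(r)/H(r)\leq Cr^{1-a}(k+\delta/4+1)$, which can be forced below $\delta/4\,(p/(p-2))$ (or just $\delta/4$ in the full formula, using $1-2/p\leq 1$) by shrinking $r_*$, since $a<1$. Consequently $H'(r)/H(r)\leq 2(k+\delta/2)/r$ on $(0,r_*]$, and integrating from $r$ to $r_*$ produces $H(r)\geq H(r_*)\,r_*^{-(2k+\delta)}\,r^{2k+\delta}$. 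On the remaining range $[r_*,R_0)$, I would exploit that $H$ is itself nondecreasing (again by Lemma \ref{lemma:H'}, since $H'\geq 0$), writing $H(r)\geq H(r_*)\geq H(r_*)R_0^{-(2k+\delta)}\,r^{2k+\delta}$ because $r\leq R_0$. Taking $K_2=H(r_*)R_0^{-(2k+\delta)}$, which depends on $\delta$, on $N$, $a$, and on $u$ through $H(r_*)$ and $r_*$, finishes the proof.

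The argument is essentially a one-sided Gr\"onwall-type comparison of $H$ with a power of $r$, driven by monotonicity on one side and the convergence $\tilde{\mathcal{N}}(r)\to k$ on the other; there is no genuine obstacle. The only delicate point is ensuring that the perturbation $(1-2/p)G/H$ remains small as $r\to 0^+$, but this is precisely the content of the trace-based bound \eqref{eq:lim_N_1} already used in Proposition \ref{prop:lim_N}, so no new ingredient is required.
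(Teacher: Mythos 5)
Your proof is correct and follows essentially the same route as the paper: integrate the logarithmic derivative $H'/H$, bounding it from below by $2k/r$ via monotonicity of $\tilde{\mathcal{N}}$ for (i), and from above by $(2k+\delta)/r$ near the origin for (ii) using $\tilde{\mathcal{N}}(0^+)=k$ together with the trace bound \eqref{eq:lim_N_1} to control $G/H$, then extending to all of $(0,R_0)$ by positivity and monotonicity of $H$. The only cosmetic difference is that the paper phrases the upper bound for (ii) in terms of $\mathcal{N}$ rather than $\tilde{\mathcal{N}}$, but since $\tilde{\mathcal{N}}+(1-\tfrac{2}{p})G/H=\mathcal{N}+G/H$ the two versions are identical.
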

\begin{proof}
	From Lemma \ref{lemma:H'} and Theorem \ref{thm:monot_N_tilde} we know that
	\[
		\frac{H'(r)}{H(r)}=\frac{2}{r}\left[\tilde{\mathcal{N}}(r)+\left(1-\frac{2}{p}\right)\frac{G(r)}{H(r)} \right]\geq \frac{2}{r}k.
	\]
	By integrating the inequality above in $(r,R_0)$, thanks to the boundedness of $u$, we conclude the proof of point $(i)$. On the other hand, we recall that
	\[
		\frac{H'(r)}{H(r)}=\frac{2}{r}\left[\mathcal{N}(r)+\frac{G(r)}{H(r)}  \right].
	\]
	Keeping \eqref{eq:lim_N_1} in mind, we obtain
	\[
		\frac{H'(r)}{H(r)}\leq\frac{2}{r}\left[\mathcal{N}(r)+Cr^{1-a}(\mathcal{N}(r)+1)  \right].
	\]
	Hence, since $\lim_{r\to 0^+}\mathcal{N}(r)=k$, for any $\delta>0$ there exists $r_0=r_0(\delta)>0$ such that, for all $r\in(0,r_0)$
	\[
		\frac{H'(r)}{H(r)}\leq \frac{2k +\delta }{r}.
	\]
	Integrating the inequality above and taking into account the boundedness of $u$, we obtain that
	\[
		H(r)\geq C_2 r^{2k+\delta}\quad\text{for all }r\in(0,r_0).
	\]
	Thanks to continuity and positiveness of $H$ we may conclude the proof of $(ii)$.
\end{proof}

The previous result readily implies an estimate on the growth rate of the solution $u$ near the origin.

\begin{corollary}\label{cor:growth_estim}
	Let $x_0\in B_{R_0/2}'$ and let $k=\lim_{r\to 0^+}\tilde{\mathcal{N}_{x_0}}(u,r)$. Then there exists $\tilde{C}>0$ depending on $N$, $a$, $\norm{g}_{L^\infty(S_1^+)}$ and $k$ (independent of $x_0$) such that
	\[
	\abs{u(x+x_0,0)}\leq \tilde{C}\abs{x}^k\quad\text{for all }x\in B_{R_0/2}'.
	\]
\end{corollary}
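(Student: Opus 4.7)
The plan is to combine the $L^2$-type bound $H_{x_0}(u,\sigma)\leq K_1\sigma^{2k}$ (the version of Corollary \ref{cor:bounds_H}(i) centered at $x_0$, which is available with $K_1$ depending only on $N,a,\|g\|_{L^\infty(S_1^+)},k$ thanks to Lemma \ref{lemma:weak_max_princ}) with the ball-averaged mean value inequality of Lemma \ref{lemma:mean_value_ineq} applied to the $a$-subharmonic functions $u^{\pm}$ provided by Lemma \ref{lemma:subharm}.

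First I would fix $x\in B_{R_0/2}'$, set $r=|x|$ and $y_0=x+x_0$, and apply Lemma \ref{lemma:mean_value_ineq} to $u^+$ and $u^-$ at the point $(y_0,0)$ with radius $r$ to get
\[
u^{\pm}(y_0,0)\leq \frac{\alpha_{N,a}}{r^{N+a+1}}\int_{B_r^+(y_0)}y^a u^{\pm}\dxdy.
\]
Summing these, using $|u|=u^++u^-$, together with the inclusion $B_r^+(y_0)\subset B_{2r}^+(x_0)$ (since $|y_0-x_0|=r$), yields
\[
|u(y_0,0)|\leq \frac{\alpha_{N,a}}{r^{N+a+1}}\int_{B_{2r}^+(x_0)}y^a|u|\dxdy.
\]
Next I would apply Cauchy--Schwarz with the weight $y^a$:
\[
\int_{B_{2r}^+(x_0)}y^a|u|\dxdy\leq \Bigl(\int_{B_{2r}^+(x_0)}y^a\dxdy\Bigr)^{1/2}\Bigl(\int_{B_{2r}^+(x_0)}y^a u^2\dxdy\Bigr)^{1/2},
\]
the first factor being a multiple of $r^{(N+a+1)/2}$. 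For the second factor I would pass to polar coordinates around $x_0$ and insert the bound from Corollary \ref{cor:bounds_H}(i):
\[
\int_{B_{2r}^+(x_0)}y^a u^2\dxdy=\int_0^{2r}\sigma^{N+a}H_{x_0}(u,\sigma)\d\sigma\leq K_1\int_0^{2r}\sigma^{N+a+2k}\d\sigma\leq C\,r^{N+a+2k+1}.
\]
Substituting back gives $|u(y_0,0)|\leq C r^{-\,(N+a+1)}\cdot r^{(N+a+1)/2}\cdot r^{(N+a+2k+1)/2}=C\,|x|^k$, which is the desired inequality.

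The main technical point that deserves care is verifying that the bound $H_{x_0}(u,\sigma)\leq K_1\sigma^{2k}$ holds with $K_1$ independent of $x_0\in B_{R_0/2}'$; this follows from re-running the argument of Corollary \ref{cor:bounds_H} at the arbitrary center $x_0$, since every ingredient (monotonicity of $\tilde{\mathcal{N}}_{x_0}$ and the $L^\infty$ bound on $u$ from Lemma \ref{lemma:weak_max_princ}) is available uniformly in $x_0$. One should also shrink the radius, say to $|x|\leq R_0/4$, so that $B_{2r}^+(x_0)\subset B_{R_0}^+$, and dispose of the remaining range $|x|\in(R_0/4,R_0/2)$ by the trivial estimate $|u|\leq \|g\|_{L^\infty(S_1^+)}$.
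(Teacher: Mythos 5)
Your proof is correct and follows essentially the same chain of inequalities as the paper's: mean value inequality for the $a$-subharmonic functions $u^\pm$, the inclusion $B_r^+(x_0+x)\subset B_{2r}^+(x_0)$, Cauchy--Schwarz, and the bound $H_{x_0}(u,\sigma)\leq K_1\sigma^{2k}$ from Corollary~\ref{cor:bounds_H}(i) recentered at $x_0$. The extra care you take about applying the mean value inequality to $u^+$ and $u^-$ separately, about the uniformity of $K_1$ in $x_0$, and about shrinking the radius so that $B_{2r}^+(x_0)\subset B_{R_0}^+$ are all sound clarifications of points the paper leaves implicit.
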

\begin{proof}
	Let $x\in B_{R_0/2}'$ and let $r:=\abs{x}$. Thanks to Lemma \ref{lemma:subharm} and Lemma \ref{lemma:mean_value_ineq} we have that
	\[
	\abs{u(x+x_0,0)}\leq \frac{\alpha_{N,a}}{r^{N+1+a}}\int_{B_r^+(x+x_0)}y^a\abs{u}\dxdy.
	\]
	Moreover
	\[
	\int_{B_r^+(x+x_0)}y^a\abs{u}\dxdy\leq \int_{B_{2r}^+(x_0)}y^a\abs{u}\dx.
	\]
	From the Cauchy-Schwartz inequality we deduce that
	\[
	\int_{B_{2r}^+(x_0)}y^a\abs{u}\dxdy\leq C_1r^{\frac{1}{2}(N+1+a)}\left( \int_{B_{2r}^+(x_0)}y^a u^2\dxdy \right)^{1/2},
	\]
	for some $C_1>0$ depending on $N,a$ and independent of $x_0$. Gathering in chain all the previous inequalities we obtain that
	\begin{equation}\label{eq:growth_estim_1}
		\abs{u(x+x_0,0)}\leq\frac{C_2}{r^{\frac{1}{2}(N+1+a) }}\left( \int_{B_{2r}^+(x_0)}y^a u^2\dxdy \right)^{1/2},
	\end{equation}
	with $C_2>0$ again depending only on $N,a$. Now, Corollary \ref{cor:bounds_H}, $(i)$, implies that
	\[
	\int_{B_{2r}^+(x_0)}y^a u^2\dxdy=\int_0^{2r}\left(\int_{S_\rho^+(x_0)}y^au^2\ds \right)\mathrm{d}\,\rho\leq C_3 r^{N+1+a+2k},
	\]
	where $C_3>0$ depends on $N,a,\norm{g}_{L^\infty(S_1^+)},k$. In fact, the proof of Corollary \ref{cor:bounds_H}, $(i)$, can be trivially adapted to the function $H_{x_0}(u,r)$ with the same constant $K_1$. The proof may now be concluded by combining the inequality above with \eqref{eq:growth_estim_1}.
\end{proof}

We now introduce the following functional of Monneau type
\begin{equation}\label{eq:def_Monn}
M_\gamma^{x_0}(v,w,r):=\frac{1}{r^{2\gamma}}H_{x_0}(v-w,r),
\end{equation}
defined for $v,w\in H^{1,a}(B_1^+)$, $x_0\in B_1'$, $r\in (0,1)$ and $\gamma\geq 0$, with $H_{x_0}$ as in \eqref{eq:def_D_H}. As indicated before, we will drop the index $x_0$ in the notation for $M_\gamma^{x_0}$ when $x_0=0$. In the following proposition we prove the monotonicity of this functional with respect to $r\in (0,R_0/2)$ when the solution $u$ and a polynomial in $\mathds{P}_k^a$ are ``compared''.

\begin{proposition}\label{prop:M'}
	Let $k=\lim_{r\to 0^+}\tilde{\mathcal{N}}(r)$ and let $p_k\in \mathds{P}_k^a$ be an $a$-harmonic homogeneous polynomial of degree $k$, even in $y$, i.e. respectively satisfying \eqref{eq:polynom3}, \eqref{eq:polynom2}, \eqref{eq:polynom1}. Then $M_k(u,p_k,\cdot)\in C^1(0,R_0/2)$ and
	\[
	\frac{\mathrm{d}}{\mathrm{d}r}M_k(u,p_k,r)\geq -\hat{C}r^{k(p-2)-a},
	\]
	for some $\hat{C}>0$ depending on $N$, $a$, $\norm{g}_{L^\infty(S_1^+)}$ and $k$.
\end{proposition}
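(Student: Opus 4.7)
The plan is to differentiate $M_k(u,p_k,\cdot)$ directly, use the homogeneity and $a$-harmonicity of $p_k$ to rewrite the boundary integrals on $S_r^+$ entirely in terms of $u$, and then exploit two facts proved earlier: the perturbed frequency $\tilde{\mathcal N}(u,r)$ is nondecreasing with limit $k$ (Theorem \ref{thm:monot_N_tilde}), and $u$ decays like $|x|^k$ near $0$ (Corollary \ref{cor:growth_estim}).

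Setting $w:=u-p_k$, the computation in Lemma \ref{lemma:H'} adapts verbatim to $w$ (the smoothness of $p_k$ gives no trouble), so $M_k(u,p_k,\cdot)\in C^1(0,R_0/2)$ and
\[
\frac{\mathrm d}{\mathrm d r}M_k(r)=\frac{2}{r^{2k+N+a+1}}\int_{S_r^+}y^{a}w\bigl(r\,\partial_\nu w-k w\bigr)\ds.
\]
Using the homogeneity identity $r\,\partial_\nu p_k=kp_k$ on $S_r^+$, the bracket simplifies to $r\,\partial_\nu u-ku$. Then writing $w=u-p_k$ inside the integral, and performing integration by parts against $p_k$ (which is $a$-harmonic and satisfies $\lim_{y\to 0^+}y^{a}\partial_y p_k=0$), one finds
\[
\int_{S_r^+}y^{a}p_k\,\partial_\nu u\ds=\frac{k}{r}\int_{S_r^+}y^{a}u\,p_k\ds-\int_{B_r'}p_k\bigl(\lambda_-(u^-)^{p-1}-\lambda_+(u^+)^{p-1}\bigr)\dx.
\]
Combining this with \eqref{eq:R-N_int_th_2} for the $\int_{S_r^+}y^{a}u\,\partial_\nu u\ds$ term and collecting, the cross terms involving $\int_{S_r^+}y^a u\,p_k\ds$ cancel and one arrives at
\[
\frac{\mathrm d}{\mathrm d r}M_k(r)=\frac{2H(u,r)}{r^{2k+1}}\bigl(\tilde{\mathcal N}(u,r)-k\bigr)+\frac{2}{r^{2k+1}}\Bigl(1-\frac{2}{p}\Bigr)G(u,r)+\frac{2}{r^{2k+N+a}}\int_{B_r'}p_k\bigl(\lambda_-(u^-)^{p-1}-\lambda_+(u^+)^{p-1}\bigr)\dx.
\]
The first two summands are nonnegative: the first because $\tilde{\mathcal N}$ is nondecreasing with limit $k$, hence $\tilde{\mathcal N}(u,r)\geq k$, and the second because $p\geq 2$ and $G\geq 0$.

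It remains to bound the last summand from below. By homogeneity, $|p_k(x)|\leq Cr^k$ on $B_r'$, while Corollary \ref{cor:growth_estim} yields $|u(x,0)|\leq \tilde C|x|^k$ on $B_{R_0/2}'$, so
\[
\left|\int_{B_r'}p_k\bigl(\lambda_-(u^-)^{p-1}-\lambda_+(u^+)^{p-1}\bigr)\dx\right|\leq C\,r^{k}\int_{B_r'}|x|^{k(p-1)}\dx\leq C\,r^{kp+N},
\]
and dividing by $r^{2k+N+a}$ gives the stated bound $-\hat Cr^{k(p-2)-a}$.

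The main obstacle is the algebraic bookkeeping in the intermediate step: one must carefully combine the integration by parts against $p_k$ with the Rellich-type identity \eqref{eq:R-N_int_th_2} so that the cross terms $\int_{S_r^+}y^{a}u\,p_k\ds$ drop out, and so that the surface quantities reorganize exactly into the perturbed Almgren combination $\tilde D(u,r)+(1-2/p)G(u,r)-kH(u,r)$, which is the object whose sign is controlled by Theorem \ref{thm:monot_N_tilde}. The pointwise $|x|^k$ bound supplied by Corollary \ref{cor:growth_estim} is then the only ingredient needed to absorb the obstacle nonlinearity into an error of order $r^{k(p-2)-a}$; the assumption $p\geq 2$ is what makes this error integrable (or at least tolerable) near $r=0$.
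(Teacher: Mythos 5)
Your proof is correct and reaches the same key identity (the paper's \eqref{eq:M'_7}) using the same ingredients: $a$-harmonicity and homogeneity of $p_k$, the Rellich identity \eqref{eq:R-N_int_th_2}, the monotonicity of $\tilde{\mathcal{N}}$, and the pointwise growth bound of Corollary \ref{cor:growth_estim}. The only difference is cosmetic bookkeeping: you collapse $M_k'$ into a single surface integral $\int_{S_r^+}y^a w(r\partial_\nu w - kw)$ and let the homogeneity of $p_k$ simplify the multiplier, whereas the paper expands $H(u-p_k,r)$ and $\frac{\d}{\d r}H(u-p_k,r)$ separately before recombining.
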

\begin{proof}
	Let $r\in(0,R_0/2)$. Since
	\begin{align}
	\frac{\mathrm{d}}{\d r}M_k(u,p_k,r)&=\frac{\d}{\d r}(r^{-2k}H(u-p_k,r)) \notag\\
	&=-2k r^{-2k-1}H(u-p_k,r)+r^{-2k}\frac{\d}{\d r}H(u-p_k,r),\label{eq:M'_1}
	\end{align}
	we thus need to compute the latter. By integrating the identity
	\[
	\dive(y^a u\nabla p_k)=y^a\nabla u\cdot\nabla p_k+u\dive(y^a\nabla p_k)
	\]
	in $B_r^+$ we obtain
	\begin{equation}\label{eq:M'_2}
	\int_{B_r^+}y^a\nabla u\cdot\nabla p_k\dxdy=\int_{S_r^+}y^a u\frac{\partial p_k}{\partial \nnu}\ds,
	\end{equation}
	since $p_k$ is $a$-harmonic and $y^a\frac{\partial p_k}{\partial y}=0$ on $B_r'$. Moreover, on $S_r^+$, we have
	\[
	\frac{\partial p_k}{\partial \nnu}=\frac{1}{r}\nabla p_k\cdot z=\frac{k}{r}p_k
	\]
	by the homogeneity of $p_k$. Combining this fact with \eqref{eq:M'_2} we have that
	\begin{equation}\label{eq:M'_3}
	r\int_{B_r^+}y^a\nabla u\cdot\nabla p_k\dxdy=k\int_{S_r^+}y^a u p_k\ds.
	\end{equation}
	On the other hand, we have that the function $u-p_k$ weakly satisfies
	\begin{equation*}
	\begin{bvp}
	-\dive(y^a\nabla (u-p_k))&=0, &&\text{in }B_r^+, \\
	-\lim_{y\to  0^+}y^a\frac{\partial}{\partial y}(u-p_k)&=\lambda_-(u^-)^{p-1}-\lambda_+(u^+)^{p-1}, &&\text{on }B_r',
	\end{bvp}
	\end{equation*}
	that, multiplied by $u-p_k$ and integrated gives that
	\begin{multline}\label{eq:M'_4}
	\int_{S_r^+}y^a(u-p_k)\frac{\partial }{\partial\nnu}(u-p_k)\ds
	=\int_{B_r^+}y^a\abs{\nabla (u-p_k)}^2\dxdy\\
	+\int_{B_r'}(\lambda_-(u^+)^p+\lambda_+(u^+)^p)\dx+\int_{B_r'}(\lambda_-(u^-)^{p-1}-\lambda_+(u^+)^{p-1})p_k\dx.
	\end{multline}
	Let us denote
	\[
		I(u,p_k,r):=\frac{1}{r^{N+a-1}}\int_{B_r'}(\lambda_-(u^-)^{p-1}-\lambda_+(u^+)^{p-1})p_k\dx.
	\]
	Reasoning as in the proof of Lemma \ref{lemma:H'}, we have that
	\[
	\frac{\d}{\d r}H(u-p_k,r)=\frac{2}{r^{N+a}}\int_{S_r^+}y^a(u-p_k)\frac{\partial }{\partial\nnu}(u-p_k)\ds.
	\]
	Therefore, combining it with \eqref{eq:M'_4} and \eqref{eq:M'_3} we deduce that
	\begin{align*}
	\frac{\d}{\d r}H(u-p_k,r)=\frac{2}{r}[&D(u-p_k,r)+G(u,r) +I(u,p_k,r)] \\
	=\frac{2}{r}\bigg[&D(u,r)+D(p_k,r)+G(u,r) \\
	&-\frac{2}{r^{N+a-1}}\int_{B_r^+}y^a\nabla u\cdot\nabla p_k\dxdy+I(u,p_k,r)\bigg] \\
	=\frac{2}{r}\bigg[&D(u,r)+D(p_k,r)+G(u,r)-\frac{2k}{r^{N+a}}\int_{S_r^+}y^a u p_k\ds+I(u,p_k,r)\bigg].
	\end{align*}
	In addition, since $p_k$ is homogeneous, then $D(p_k,r)=k H(p_k,r)$; hence
	\begin{equation}\label{eq:M'_5}
	\frac{\d}{\d r}H(u-p_k,r)= \frac{2}{r}\left[D(u,r)+k H(p_k,r)+G(u,r)-\frac{2k}{r^{N+a}}\int_{S_r^+}y^a u p_k\ds+I(u,p_k,r)\right].
	\end{equation}
	Finally, let us compute the other term in \eqref{eq:M'_1}
	\begin{equation}\label{eq:M'_6}
	H(u-p_k,r)=H(u,r)+H(p_k,r)-\frac{2}{r^{N+a}}\int_{S_r^+}y^a u p_k\ds.
	\end{equation}
	Putting together \eqref{eq:M'_5} and \eqref{eq:M'_6} with \eqref{eq:M'_1} we obtain that $M_k(u,p_k,\cdot)\in C^1(0,R_0/2)$ and
	\begin{align}
		\frac{\d}{\d r}M_k(u,p_k,r)=&\frac{2}{r^{2k+1}}\left[ \tilde{D}(u,r)-kH(u,r)+\left(1-\frac{2}{p}\right)G(u,r)+I(u,p_k,r) \right]\nonumber \\
		=&\frac{2}{r^{2k+1}}\left[ \tilde{D}(u,r)-kH(u,r)+\left(1-\frac{2}{p}\right)G(u,r)\right]\label{eq:M'_7}\\
		&+\frac{2}{r^{N+a+2k}}\int_{B_r'}(\lambda_-(u^-)^{p-1}-\lambda_+(u^+)^{p-1})p_k\dx.\nonumber
	\end{align}
	From Corollary \ref{cor:growth_estim} we have that
	\[
		\norm{u}_{L^\infty(B_r')}\leq \tilde{C}r^k,
	\]
	while, being $p_k$ an homogeneous polynomial of degree $k$,
	\[
		\norm{p_k}_{L^\infty(B_r')}\leq \tilde{C}_1 r^k
	\]
	for some $\tilde{C}_1>0$. Therefore
	\[
		\left|\frac{2}{r^{N+a+2k}}\int_{B_r'}(\lambda_-(u^-)^{p-1}-\lambda_+(u^+)^{p-1})p_k\dx\right|\leq \hat{C}r^{k(p-2)-a}
	\]
	for some $\hat{C}>0$ depending on $N$, $a$, $\norm{g}_{L^\infty(S_1^+)}$ and $k$. In view of the inequality above and \eqref{eq:M'_7}, together with  the fact that $\tilde{\mathcal{N}}(r)\geq k$ and that $p\geq 2$, the proof is complete.

\end{proof}

\begin{corollary}\label{cor:lim_monn}
	There exists $\lim_{r\to 0^+}M_k(u,p_k,r)=:M_k(u,p_k,0^+)\in[0,+\infty)$. Moreover
	\begin{equation}\label{eq:lim_monn_th}
		M_k(u,p_k,0^+)\leq M_k(u,p_k,r)+\hat{C}_1r^{k(p-2)+1-a},\quad\text{for all }r\in(0,R_0/2),
	\end{equation}
	where $\hat{C}_1:=\hat{C}(k(p-2)+1-a)^{-1}$ and $\hat{C}>0$ is as in Proposition \ref{prop:M'}.
\end{corollary}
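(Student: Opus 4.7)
The plan is to integrate the one-sided differential inequality provided by Proposition~\ref{prop:M'}. First, I would observe that since $p\geq 2$ and $a\in(-1,1)$, the exponent $\beta := k(p-2)+1-a$ is strictly positive. With $\hat{C}_1 := \hat{C}/\beta$, the bound $\frac{d}{dr}M_k(u,p_k,r) \geq -\hat{C}r^{\beta-1}$ is equivalent to
$$\frac{d}{dr}\!\left[M_k(u,p_k,r) + \hat{C}_1 r^{\beta}\right] \geq 0 \quad \text{on } (0,R_0/2),$$
so the function in brackets is nondecreasing on this interval.

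Next I would exploit the nonnegativity of $M_k(u,p_k,\cdot)$: by the definition \eqref{eq:def_Monn}, $M_k(u,p_k,r)$ is a rescaled integral of $y^a(u-p_k)^2$, hence nonnegative. Therefore the nondecreasing function $r\mapsto M_k(u,p_k,r)+\hat{C}_1r^\beta$ is bounded below by $0$, and thus admits a finite, nonnegative limit as $r\to 0^+$. Since $\hat{C}_1r^\beta\to 0$ as $r\to 0^+$ (using $\beta>0$), the limit $M_k(u,p_k,0^+) := \lim_{r\to 0^+}M_k(u,p_k,r)$ exists and belongs to $[0,+\infty)$.

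For the inequality \eqref{eq:lim_monn_th}, I would fix any $r\in(0,R_0/2)$ and invoke the monotonicity established above to write
$$M_k(u,p_k,r') + \hat{C}_1 (r')^\beta \leq M_k(u,p_k,r) + \hat{C}_1 r^\beta \quad \text{for all } 0<r'<r,$$
and then let $r'\to 0^+$. No genuine obstacle is expected: the core analytic work lives in Proposition~\ref{prop:M'}, and the corollary is essentially a routine consequence of integrating that differential inequality, exploiting that the error exponent $\beta$ is strictly positive in the admissible range of $p$ and $a$.
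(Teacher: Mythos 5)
Your proof is correct and follows essentially the same route as the paper: both rewrite Proposition~\ref{prop:M'} as the nonnegativity of $\frac{\mathrm{d}}{\mathrm{d}r}\bigl[M_k(u,p_k,r)+\hat{C}_1 r^{k(p-2)+1-a}\bigr]$, use monotonicity together with the nonnegativity of $M_k$ to obtain a finite nonnegative limit, note that the correction term vanishes since $k(p-2)+1-a>0$, and then integrate (equivalently, send $r'\to 0^+$ in the monotonicity inequality) to get \eqref{eq:lim_monn_th}. Your write-up is, if anything, slightly more explicit about why the limit exists.
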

\begin{proof}
	From Proposition \ref{prop:M'} we can derive that
	\begin{equation}\label{eq:lim_monn_1}
		\frac{\d}{\d r}\left[ M_k(u,p_k,r)+\frac{\hat{C}r^{k(p-2)+1-a}}{k(p-2)+1-a} \right]\geq 0.
	\end{equation}
	Hence the  limit
	\[
		\lim_{r\to 0^+} \left[M_k(u,p_k,r)+\frac{\hat{C}r^{k(p-2)+1-a}}{k(p-2)+1-a}\right]
	\]
exists and it is finite.
	Since $a\in(0,1)$ and $p\geq 2$, we know that $r^{k(p-2)+1-a}$ vanishes as $r\to 0^+$, and therefore
	\[
		\lim_{r\to 0^+} \left[M_k(u,p_k,r)+\frac{\hat{C}r^{k(p-2)+1-a}}{k(p-2)+1-a}\right]=	\lim_{r\to 0^+} M_k(u,p_k,r)=:M_k(u,p_k,0^+).
	\]
	In addition $M_k(u,p_k,r)\geq 0$ for all $r\in (0,R_0/2)$ and so $M_k(u,p_k,0^+)\geq 0$ as well. Finally, by integration in $(0,r)$ of \eqref{eq:lim_monn_1} we obtain \eqref{eq:lim_monn_th} and we conclude the proof.	
\end{proof}

\section{Blow-up analysis}\label{sec:blow_up}

This section is devoted the blow-up analysis of a particular normalization of the solution, which will lead to the proof of Theorem \ref{thm:blow_up_homo}.

Hereafter, we fix $\tilde{R}\in(0,R_0/2)$. In this section we denote, for $z\in B_1^+$ and $r\in(0,\tilde{R})$,
\begin{equation}\label{eq:u_rescaled_H}
	u_r(z):=\frac{u(rz)}{\sqrt{H(r)}}.
\end{equation}

The first step is the blow-up analysis for the function $u_r$ as in \eqref{eq:u_rescaled_H}.

\begin{theorem}\label{thm:blow_up}
	Let $k=\lim_{r\to 0^+}\mathcal{N}(r)$ and $u_r$ as in \eqref{eq:u_rescaled_H}. Then $k\in \N\cup \{0\}$ and for any sequence $r_n\to 0^+$ there exists a subsequence $r_{n_i}\to 0^+$ and a function $\tilde{u}\in \mathds{P}_k^a$, $\tilde{u}\not\equiv 0$, such that
		\begin{gather}
			u_{r_{n_i}}\to \tilde{u} \quad\text{in }H^{1,a}(B_1^+)~\text{and }C^{0,\alpha}(\overline{B_1^+}) \label{eq:blow_up_th_1} \\
			u_{r_{n_i}}\to\tilde{u}\quad\text{in }C^{1,\alpha}(B_1') \label{eq:blow_up_th_2}
		\end{gather}
		as $i\to \infty$;
\end{theorem}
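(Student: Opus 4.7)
My plan is to follow the standard blow-up scheme built on the Almgren monotonicity machinery of Section \ref{sec:monotonicity_form}. A direct change-of-variable computation gives $H(u_r,1)=1$ and $D(u_r,1)=\mathcal{N}(u,r)$, so Theorem \ref{thm:monot_N_tilde} combined with Proposition \ref{prop:lim_N} ensures $\mathcal{N}(u,r)$ stays bounded as $r\to 0^+$, and hence $\{u_r\}$ is uniformly bounded in $H^{1,a}(B_1^+)$. Along a subsequence I extract $u_{r_{n_i}}\rightharpoonup \tilde u$ weakly in $H^{1,a}(B_1^+)$, strongly in $L^{2,a}(B_1^+)$, and strongly in $L^2(B_1')$ via the compact trace embedding used in Proposition \ref{prop:exist_unique}.

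A rescaling then shows that each $u_r$ weakly satisfies
\[
\begin{bvp}
-\mathrm{div}(y^a\nabla u_r)&=0, &&\text{in }B_1^+,\\
-\lim_{y\to 0^+} y^a\partial_y u_r &= c(r)\bigl[\lambda_-(u_r^-)^{p-1}-\lambda_+(u_r^+)^{p-1}\bigr], &&\text{on }B_1',
\end{bvp}
\]
where $c(r)=r^{1-a}H(r)^{(p-2)/2}$. By Corollary \ref{cor:bounds_H}(i) one has $c(r)\lesssim r^{1-a+k(p-2)}\to 0^+$ since $a<1$ and $p\ge 2$. Testing this rescaled equation against $u_{r_{n_i}}-\tilde u$ and passing to the limit, the strong $L^2(B_1')$-trace convergence kills the boundary nonlinearity, while the volume term forces $\int_{B_1^+} y^a|\nabla u_{r_{n_i}}|^2 \to \int_{B_1^+} y^a|\nabla \tilde u|^2$. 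Hence the convergence is strong in $H^{1,a}(B_1^+)$, and the limit $\tilde u$ is $a$-harmonic in $B_1^+\cup B_1'$, equivalently $a$-harmonic on all of $B_1$ after even reflection in $y$.

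To identify $\tilde u$, I exploit that $\mathcal{N}(u_{r_{n_i}},\rho)=\mathcal{N}(u,\rho r_{n_i})$, whose right-hand side tends to $k$ for every fixed $\rho\in(0,1)$ by Proposition \ref{prop:lim_N}. Strong $H^{1,a}$ convergence and trace continuity give $\mathcal{N}(u_{r_{n_i}},\rho)\to \mathcal{N}(\tilde u,\rho)$ provided $H(\tilde u,\rho)>0$. Since $H(\tilde u,1)=\lim_i H(u_{r_{n_i}},1)=1$, the limit $\tilde u$ is not identically zero; the classical Almgren monotonicity for $a$-harmonic functions (which is Theorem \ref{thm:monot_N_tilde} specialised to $\lambda_\pm=0$) then guarantees $H(\tilde u,\rho)>0$ throughout $(0,1]$. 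Therefore $\mathcal{N}(\tilde u,\rho)\equiv k$ on $(0,1)$, and the standard Euler-identity argument (i.e.\ the equality case of Cauchy--Schwarz in the monotonicity computation implies $\nabla\tilde u\cdot z = k\tilde u$) forces $\tilde u$ to be positively homogeneous of degree $k$. Together with $a$-harmonicity and $y$-evenness this places $\tilde u\in\mathds{P}_k^a$, and \cite[Theorem 1.1]{Sire2019} forces $k\in\N\cup\{0\}$.

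Finally, uniform $L^\infty$ bounds on $u_{r_n}$ follow from $H(u_{r_n},1)=1$ via the mean-value inequalities of Lemma \ref{lemma:mean_value_ineq} applied to $(u_{r_n})^\pm$ ($a$-subharmonic by Lemma \ref{lemma:subharm}), so Lemmas \ref{lemma:regularity} and \ref{lemma:high_reg} applied to the rescaled problem (whose nonlinear boundary term is $O(r_n^{1-a+k(p-2)})$, hence uniformly small) give uniform $C^{0,\alpha}(\overline{B_1^+})$ and $C^{1,\alpha}(B_1')$ bounds; Arzel\`a--Ascoli then upgrades the subsequential convergence to the claimed topologies, with the limit necessarily equal to $\tilde u$. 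I expect the main obstacle to be verifying carefully that the uniform H\"older estimates are scale-invariant enough to apply to the rescaled family $\{u_{r_n}\}$, and, in parallel, establishing strong trace convergence so that both $H(\tilde u,\rho)$ and $\mathcal{N}(\tilde u,\rho)$ genuinely inherit their limits from the sequence.
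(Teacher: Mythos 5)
Your blow-up scheme is structurally the same as the paper's (boundedness from $H(u_{r_n},1)=1$ and frequency bound, extract a weak limit, show the boundary term vanishes so $\tilde u$ is $a$-harmonic, pass to the limit in the frequency to get homogeneity, invoke \cite[Theorem~1.1]{Sire2019} for smoothness and integer degree). The only genuinely different ingredient is how the nonlinear boundary term is killed: you use Corollary~\ref{cor:bounds_H}(i) for the coefficient $c(r)$ plus uniform $L^\infty$ bounds from mean-value inequalities, whereas the paper combines the growth estimate of Corollary~\ref{cor:growth_estim} with the \emph{lower} bound on $H$ from Corollary~\ref{cor:bounds_H}(ii); both routes are workable.

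There is, however, a genuine gap in your strong-$H^{1,a}$ convergence step. You cannot test the rescaled weak formulation against $u_{r_{n_i}}-\tilde u$, because this function does not belong to $H^{1,a}_{0,S_1^+}(B_1^+)$: the admissible test functions must vanish on $S_1^+$, and $u_{r_{n_i}}$, $\tilde u$ do not share a trace there. When you integrate by parts you therefore pick up the extra term
\begin{equation*}
\int_{S_1^+} y^a\,\frac{\partial u_{r_{n_i}}}{\partial\nnu}\,(u_{r_{n_i}}-\tilde u)\,\ds ,
\end{equation*}
over which you have no control (neither weak nor strong convergence of the normal derivative on $S_1^+$ is available from the $H^{1,a}$ bound). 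So the claim that "the volume term forces $\int y^a|\nabla u_{r_{n_i}}|^2\to\int y^a|\nabla\tilde u|^2$" does not follow as stated. The paper avoids this by first establishing $C^{0,\alpha}(\overline{B_1^+})$ and $C^{1,\alpha}(B_1')$ convergence from the scale-invariant estimates of Lemmas~\ref{lemma:regularity} and \ref{lemma:high_reg} and Arzel\`a--Ascoli, and only afterwards upgrading to $H^{1,a}$ convergence. An alternative repair: test with cutoffs to obtain $H^{1,a}_{\mathrm{loc}}$ strong convergence, deduce $\mathcal N(\tilde u,\rho)=k$ for $\rho<1$ and hence homogeneity and $D(\tilde u,1)=kH(\tilde u,1)=k$, and then combine $D(u_{r_{n_i}},1)=\mathcal N(u,r_{n_i})\to k=D(\tilde u,1)$ with weak convergence to conclude norm convergence of the gradients. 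Either way, the testing against $u_{r_{n_i}}-\tilde u$ as written would fail and needs to be replaced.
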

\begin{proof}
	Let $r_n\to 0^+$. First, we have that $u_{r_n}\in H^{1,a}(B_1^+)$ and
	\[
		\int_{S_1^+}y^au_{r_n}^2\ds=H(u_{r_n},1)=1.
	\]
	Moreover
	\[
		\int_{B_1^+}y^a\abs{\nabla u_{r_n}}^2\dxdy=\mathcal{N}(u_{r_n},1)=\mathcal{N}(u,r_n)\leq \tilde{\mathcal{N}}(u,\tilde{R}).
	\]
	Therefore, thanks to the Poincar\'e-type inequality Lemma \ref{lemma:poincare}, we know that $\{u_{r_n}\}_n$ is bounded in $H^{1,a}(B_1^+)$. Thus there exists a subsequence $r_{n_i}\to 0^+$ and a function $\tilde{u}\in H^{1,a}(B_1^+)$ such that
	\[
		u_{r_{n_i}}\rightharpoonup \tilde{u}\quad\text{weakly in }H^{1,a}(B_1^+)
	\]
	as $i\to\infty$. Due to the compact embedding, $u_{r_{n_i}}\to \tilde{u}$ strongly in $L^{2,a}(S_1^+)$, which implies that $\norm{\tilde{u}}_{L^{2,a}(S_1^+)}=1$ and therefore $\tilde{u}\not\equiv 0$. Next, we have that $u_r\in H^{1,a}(B_1^+)$ satisfies
	\[
		\int_{B_1^+}y^a\nabla u_r\cdot\nabla \phi\dxdy=r^{1-a}(H(r))^{\frac{p-2}{2}}\int_{B_1'}(\lambda_-(u_r^-)^{p-1}-\lambda_+(u_r^+)^{p-1})\phi\dx
	\]
	for all $\phi\in C_c^\infty(B_1^+\cup B_1')$. Hence, for $\phi\in C_c^\infty(B_1^+\cup B_1')$ we have
	\begin{equation}\label{eq:blow_up_1}
		\left| \int_{B_1^+}y^a\nabla u_{r_{n_i}}\cdot\nabla \phi\dxdy \right|\leq C r_{n_i}^{1-a}(H(r_{n_i}))^{\frac{p-2}{2}}\int_{B_1'}\abs{u_{r_{n_i}}}^{p-1}\dx,
	\end{equation}
	where $C=2\max\{\lambda_-,\lambda_+\}\norm{\phi}_{L^\infty(B_1^+)}$. We now observe that, thanks to Corollary \ref{cor:growth_estim},
		\[
		\abs{u_{r_{n_i}}(x,0)}^{p-1}\leq \frac{Cr_{n_i}^{k(p-1)}}{(H(r_{n_i}))^{\frac{p-1}{2}}},
		\]
		for another constant $C>0$. Combining this with \eqref{eq:blow_up_1} we obtain that
		\[
		\left| \int_{B_1^+}y^a\nabla u_{r_{n_i}}\cdot\nabla \phi\dxdy \right|\leq \frac{C r_{n_i}^{k(p-1)+1-a}}{\sqrt{H(r_{n_i})}}.
		\]
		up to renaming the constant $C$. Now, by choosing $\delta=1-a$ in Corollary \ref{cor:bounds_H}, $(ii)$, and plugging the estimate on $H$ in the previous equation, we derive that
		\[
		\left| \int_{B_1^+}y^a\nabla u_{r_{n_i}}\cdot\nabla \phi\dxdy \right|\leq C r_{n_i}^{k(p-2)+\frac{1-a}{2}},
		\]
		possibly renaming again the constant $C$. Thus, passing to the limit for $i\to\infty$ yields that $\tilde{u}$ is $y^a$-harmonic in $B_1^+\cup B_1'$. Thanks to the regularity of $u_r$ in H\"older spaces (see Lemma \ref{lemma:regularity} and Lemma \ref{lemma:high_reg}) we can easily deduce that
		\[
			u_{r_{n_i}}\to \tilde{u}\quad\text{in }C^{0,\alpha}(\overline{B_1^+})\text{ as }i\to\infty,
		\]
		as well as
		\[
				\nabla_xu_{r_{n_i}}\to\nabla_x \tilde{u}\quad\text{and}\quad y^a\frac{\partial u_{r_{n_i}}}{\partial_y}\to y^a\frac{\partial u_{r_{n_i}}}{\partial_y}\quad\text{in }C^{0,\alpha}(\overline{B_1^+}),\quad\text{as }i\to\infty.
		\]
		An integration by parts implies that $\lVert u_{r_{n_i}}\rVert_{H^{1,a}(B_1^+)}\to \norm{\tilde{u}}_{H^{1,a}(B_1^+)}$. Hence \eqref{eq:blow_up_th_1} and \eqref{eq:blow_up_th_2} are proved.
	
	Now we can pass to the limit in the Almgren frequency function and obtain that, for any $R\in(0,1)$
	\[
		k=\lim_{i\to\infty}\mathcal{N}(u,R r_{n_i})=\lim_{i\to\infty}\mathcal{N}(u_{r_{n_i}},R)=\mathcal{N}(\tilde{u},R),
	\]
	which implies that $\tilde{u}$ is homogeneous of degree $k$ (for the proof of this fact see e.g. \cite[Proposition 4.6]{Sire2020}).  Therefore, the even reflection of $\tilde{u}$ through $\{y=0\}$ is an $a$-harmonic homogeneous polynomial of degree $k$. But such a polynomial must be smooth (see e.g. \cite[Theorem 1.1]{Sire2019}) and so we know that $k$ must be a nonnegative integer and that $\tilde{u}\in\mathds{P}_k^a$.
\end{proof}

Next, we prove a nondegeneracy result of the solution at any boundary point.

\begin{proposition}\label{prop:nondegeneracy}
	There exists two constants $C_1,C_2>0$ such that the following estimates from above and below hold:
	\begin{equation}\label{eq:nondeg_th_1}
		\abs{u(z)}\leq C_1\abs{z}^k\quad\text{for all }z\in B_r^+
	\end{equation}
	and
	\begin{equation}\label{eq:nondeg_th_2}
		\sup_{S_r^+}\abs{u}\geq C_2r^{k}
	\end{equation}
	for all $r\in (0,\tilde{R})$. Moreover, there exists $\lim_{r\to 0^+}r^{-2k}H(u,r)\in (0,+\infty)$.
\end{proposition}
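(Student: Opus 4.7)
My strategy is to prove the four claims in the order: \eqref{eq:nondeg_th_1}, existence of $L:=\lim_{r\to 0^+}r^{-2k}H(u,r)$ via Monneau, positivity of $L$, and then \eqref{eq:nondeg_th_2}. For \eqref{eq:nondeg_th_1}, I adapt Corollary \ref{cor:growth_estim} to interior points $z\in B_r^+$: since $u^\pm$ are $a$-subharmonic (Lemma \ref{lemma:subharm}), their even-in-$y$ extensions are $|y|^a$-subharmonic on full balls, so the full-ball analogue of Lemma \ref{lemma:mean_value_ineq} at $z$ with radius $2|z|$ yields $|u(z)|\leq C|z|^{-(N+1+a)}\int_{B_{3|z|}^+}y^a|u|\dxdy$. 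Applying Cauchy--Schwarz, the coarea formula, and $H(u,s)\leq K_1 s^{2k}$ from Corollary \ref{cor:bounds_H}(i) then yields $|u(z)|\leq C_1|z|^k$.

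For the existence of $L$, I apply Proposition \ref{prop:M'} with the trivial polynomial $p_k=0\in\mathds{P}_k^a$. The error term in the derivative of $M_k$ comes from $\int_{B_r'}(\lambda_-(u^-)^{p-1}-\lambda_+(u^+)^{p-1})p_k\dx$ and vanishes identically when $p_k=0$, so
\[
\frac{d}{dr}M_k(u,0,r)=\frac{2}{r^{2k+1}}\bigl[\tilde D(r)-kH(r)+(1-2/p)G(r)\bigr].
\]
The bracket is bounded below by $(1-2/p)G(r)\geq 0$ since $\tilde{\mathcal{N}}(r)\geq k$ by Theorem \ref{thm:monot_N_tilde}. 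Hence $\mu(r):=r^{-2k}H(u,r)=M_k(u,0,r)$ is non-decreasing on $(0,\tilde R)$, and being bounded above by $K_1$ via Corollary \ref{cor:bounds_H}(i), the limit $L\in[0,K_1]$ exists.

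For \eqref{eq:nondeg_th_2} once $L>0$ is established: by Theorem \ref{thm:blow_up}, any sequence $r_n\to 0^+$ admits a subsequence along which $u_{r_n}\to\tilde u$ uniformly on $\overline{B_1^+}$ with $\tilde u\in\mathds{P}_k^a$ and $\|\tilde u\|_{L^{2,a}(S_1^+)}=1$. The unit $L^{2,a}(S_1^+)$-sphere of the finite-dimensional space $\mathds{P}_k^a$ being compact, the continuous functional $q\mapsto\sup_{S_1^+}|q|$ attains a strictly positive minimum $c_0>0$ on it. A standard subsequence argument then yields $\sup_{S_1^+}|u_r|\geq c_0/2$ for all sufficiently small $r$, and $\sup_{S_r^+}|u|=\sqrt{H(r)}\,\sup_{S_1^+}|u_r|\geq(c_0/2)\sqrt{L/2}\,r^k$ for such $r$, extended uniformly to $(0,\tilde R)$ via continuity and Lemma \ref{lemma:well_pos_H}. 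For $L>0$ itself I argue by contradiction: if $L=0$, then integrating $(d/dr)\log\mu(r)=(2/r)[(\mathcal{N}(r)-k)+G(r)/H(r)]$ (Lemma \ref{lemma:H'}) on $(0,\tilde R/2)$ must diverge to $-\infty$. But $G/H=O(r^{1-a})$ (proof of Proposition \ref{prop:lim_N}) is $ds/s$-integrable near $0$, and a quantitative refinement of Theorem \ref{thm:monot_N_tilde} (retaining the Cauchy--Schwarz entropy remainder in $\tilde{\mathcal{N}}'$) controls $\mathcal{N}(r)-k$ by a similarly integrable function of $1/s$, producing the contradiction.

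The main obstacle is the positivity of $L$: the Monneau monotonicity of $\mu$ alone admits $L=0$. Excluding this requires the quantitative rate of convergence $\tilde{\mathcal{N}}(r)\to k$, which must be extracted carefully from the Cauchy--Schwarz entropy term in the derivative of the perturbed frequency.
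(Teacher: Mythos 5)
Your decomposition into four claims is sensible, and three of the four steps are sound. Step (a) (the upper bound \eqref{eq:nondeg_th_1}) is a valid alternative to the paper's proof, which instead argues by contradiction: assuming $\sup_{S_{r_n}^+}|u|\geq n r_n^k$ for some $r_n\to 0^+$, Corollary \ref{cor:bounds_H}(i) gives $\sup_{S_1^+}|u_{r_n}|\geq n/\sqrt{K_1}\to\infty$, contradicting the $C^{0,\alpha}(\overline{B_1^+})$-compactness from Theorem \ref{thm:blow_up}. Your direct mean-value route works too, provided one uses the interior mean-value inequality for $|y|^a$-subharmonic functions on full balls from the $A_2$ theory (the paper's Lemma \ref{lemma:mean_value_ineq} is stated only for centers on the thin space). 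Step (b) is correct and clean: taking $p_k=0$ in Proposition \ref{prop:M'} makes the error term vanish, and $\tilde{\mathcal N}(r)\geq k$ gives $\frac{\d}{\d r}M_k(u,0,r)\geq 0$, so $r^{-2k}H(u,r)$ is non-decreasing; bounded above by Corollary \ref{cor:bounds_H}(i), the limit $L\geq 0$ exists. (This monotonicity is in fact already implicit in the proof of Corollary \ref{cor:bounds_H}(i), via $H'/H\geq 2k/r$.) Step (d) is also fine once $L>0$ is known.

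The genuine gap is step (c), the positivity of $L$, and you correctly flag it as the main obstacle — but the proposed fix does not work. You want to show $\int_0\frac{2}{r}(\tilde{\mathcal N}(r)-k)\,\d r<\infty$ by ``retaining the Cauchy–Schwarz entropy remainder'' from the proof of Theorem \ref{thm:monot_N_tilde}. That remainder is a \emph{lower} bound on $\tilde{\mathcal N}'$, and it measures the deviation of $u$ from being homogeneous on $S_r^+$; it does not produce an \emph{upper} bound for $\tilde{\mathcal N}(r)-k$ of the needed integrable form. Nothing in the qualitative Almgren monotonicity forbids, say, $\tilde{\mathcal N}(r)-k\sim 1/|\log r|$, for which $\int_0(\tilde{\mathcal N}(r)-k)\,\d r/r=\infty$. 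So the claimed quantitative rate has to come from somewhere else, and it is in fact logically equivalent to $L>0$ itself — there is no shortcut here.

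The paper avoids this entirely by proving \eqref{eq:nondeg_th_2} directly by contradiction, without first establishing $L>0$: assume a subsequence $r_n\to 0^+$ with $r_n^{-2k}\sup_{S_{r_n}^+}|u|^2\to 0$, so $H(u,r_n)=o(r_n^{2k})$. Let $\tilde u$ be a blow-up limit along a further subsequence, $\|\tilde u\|_{L^{2,a}(S_1^+)}=1$. Then $M_k(u,\tilde u,r_n)\to H(\tilde u,1)=1$, and since Corollary \ref{cor:lim_monn} guarantees the full limit exists, $M_k(u,\tilde u,0^+)=1$. The almost-monotonicity $M_k(u,\tilde u,0^+)\leq M_k(u,\tilde u,r)+\hat C_1 r^{k(p-2)+1-a}$ then yields, after expanding $H(u-\tilde u,r)$ and normalizing by $r^k\sqrt{H(u,r)}$ (using Corollary \ref{cor:bounds_H}(ii) to make the error term vanish), that $-2H(\tilde u,1)\geq 0$, which is absurd. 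With \eqref{eq:nondeg_th_2} established, $L>0$ follows immediately from your step (b) together with the resulting lower bound $C\leq r^{-2k}H(u,r)$. Swapping your steps (c) and (d) in this way, and replacing the integrability argument with the Monneau contradiction, repairs the proof.
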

\begin{proof}
	In order to prove \eqref{eq:nondeg_th_1}, let us assume by contradiction that there exists a sequence $r_n\to 0^+$ such that
	\[
	\sup_{ z\in S_{r_n}^+}\abs{u(z)}\geq n r_n^{k}.
	\]
	Together with Corollary \ref{cor:bounds_H} and a change of variable this means that
	\[
	\sup_{S_1^+}\abs{u_{r_n}}^2\geq K_1 n \int_{S_1^+}y^a u_{r_n}^2\ds.
	\]
	But in view of Theorem \ref{thm:blow_up} this gives rise to a contradiction for $n$ sufficiently large.	
	
	Now let us prove \eqref{eq:nondeg_th_2}. We observe that \eqref{eq:nondeg_th_2} is equivalent to
	\begin{equation}\label{eq:nondeg_1}
		r^{-2k}\sup_{S_r^+}\abs{u}^2\geq C_2^2>0.
	\end{equation}
	By explicit computation it is possible to check that
	\[
		\int_{S_r^+}y^a\ds = \bar{C}r^{N+a},
	\]
	for some $\bar{C}>0$ (depending on $N$ and $a$), thus implying that	
	\begin{equation*}
		r^{-2k}H(u,r)\leq \bar{C}r^{-2k}\sup_{S_r^+}\abs{u}^2.
	\end{equation*}
	 So let us contradict \eqref{eq:nondeg_1} and assume that, up to a subsequence
	\[
		0\leq r^{-2k}H(u,r)\leq \bar{C}r^{-2k}\sup_{S_r^+}\abs{u}^2 \to 0,
	\]
	and so that
	\begin{equation}\label{eq:nondeg_2}
		 H(u,r)=o(r^{2k})\quad\text{as }r\to 0^+.
	\end{equation} Let $\tilde{u}\in H^{1,a}(B_1^+)$ be, as in Theorem \ref{thm:blow_up}, the blow-up limit of $u_r(z)=[H(u,r)]^{-1/2}u(rz)$, possibly passing to another subsequence as $r\to 0^+$. By definition of the Monneau-type functional, we have
	\begin{equation}\label{eq:nondeg_3}
		M_k(u,\tilde{u},r)=r^{-2k}\left(H(u,r)+H(\tilde{u},r)-\frac{2}{r^{N+a}}\int_{S_r^+}y^a u \tilde{u}\ds  \right).
	\end{equation}
	Thanks to homogeneity of $\tilde{u}$, we notice that
	\begin{equation}\label{eq:nondeg_4}
		H(\tilde{u},r)=r^{2k}H(\tilde{u},1).
	\end{equation}
	On the other hand, by H\"older's inequality, \eqref{eq:nondeg_2} and \eqref{eq:nondeg_4}
	\begin{equation*}
		\left|r^{-N-a-2k}\int_{S_r^+}y^a u\tilde{u}\dxdy  \right|\leq \sqrt{r^{-2k}H(u,r)}\sqrt{r^{-2k}H(\tilde{u},r)}=o(1)
	\end{equation*}
	as $r\to 0^+$. Inserting this, together  with \eqref{eq:nondeg_2} and \eqref{eq:nondeg_4}, in \eqref{eq:nondeg_3} we obtain
	\begin{equation*}
		M_k(u,\tilde{u},r)\to H(\tilde{u},1)
	\end{equation*}
	as $r\to 0^+$. Then, by monotonicity of $M_k(u,\tilde{u},r)$, proved in Proposition \ref{prop:M'}, we have that
	\begin{equation*}
		r^{-2k}H(\tilde{u},r)=H(\tilde{u},1)\leq r^{-2k}H(u-\tilde{u},r)+\hat{C}_1 r^{k(p-2)+1-a}.
	\end{equation*}
	We infer
	\begin{equation}\label{eq:nondeg_5}
		\frac{1}{r^{N+a}}\int_{S_r^+}y^a(u^2-2u\tilde{u})\ds\geq -\hat{C}_1r^{kp+1-a}
	\end{equation}
	for the chosen sequence $r\to 0^+$. But, after rescaling and dividing by $\sqrt{r^{2k} H(u,r)}$, thanks again to the homogeneity of $\tilde{u}$, \eqref{eq:nondeg_5} becomes
	\begin{equation*}
		\int_{S_1^+}y^a(\sqrt{r^{-2k}H(u,r)}u_r^2-2u_r\tilde{u})\ds\geq -\frac{\hat{C}_1 r^{k(p-1)+1-a}}{\sqrt{H(u,r)}}.
	\end{equation*}
	Now, let us apply Corollary \ref{cor:bounds_H}, $(ii)$, with $\delta=1-a$ to find that there exists $K_2>0$ such that $H(u,r)\geq K_2 r^{2k+\frac{1-a}{2}}$. Continuing with the inequality above we get
	\[
		\int_{S_1^+}y^a(\sqrt{r^{-2k}H(u,r)}u_r^2-2u_r\tilde{u})\ds\geq -\hat{C}_1 r^{k(p-2)+\frac{1-a}{2}}.
	\]
	Passing to the limit as $r\to 0^+$ we obtain
	\[
		-2H(\tilde{u},1)=-2\int_{S_1^+}y^a \tilde{u}^2\ds\geq 0,
	\]	
	thus giving rise to a contradiction since $H(\tilde{u},1)>0$.  If not, in fact, the unique continuation principle would be violated, as $H(\tilde{u},1)=r^{-2k}H(\tilde{u},r)$ for all $r\in (0,1)$. Then there exists a constant $C>0$ such that
	\begin{equation*}
		C\leq r^{-2k}H(u,r)\leq \bar{C}r^{-2k}\sup_{S_r^+}\abs{u}^2
	\end{equation*}
	for all $r\in (0,1)$ and this proves \eqref{eq:nondeg_th_2}. The second part of the thesis comes from the inequality above and Corollary \ref{cor:bounds_H}, $(i)$.
	
\end{proof}

Next, we use once more  the monotonicity of the Monneau functional  \eqref{eq:def_Monn} to uniquely identify the blow-up limit and to prove Theorem \ref{thm:blow_up_homo}.

\begin{proof}[Proof of Theorem \ref{thm:blow_up_homo}]
	Let $r_n\to 0^+$. By Theorem \ref{thm:blow_up} there exists a subsequence $r_{n_i}\to 0^+$ and an $a$-harmonic homogeneous polynomial $\tilde{u}$, even in $y$, such that
	\[
		u_{r_{n_i}}\to \tilde{u} \quad\text{in }H^{1,a}(B_1^+),~C^{0,\alpha}(\overline{B_1^+})~\text{and } C^{1,\alpha}(B_1')
	\]
	as $i\to \infty$. Therefore, if we let $\gamma:=\lim_{r\to 0^+}r^{-2k}H(u,r)\in (0,+\infty)$, whose existence is ensured by Proposition \ref{prop:nondegeneracy}, we have that
	\[
		u_{r_{n_i}}^{k}\to u_0:=\sqrt{\gamma}\,\tilde{u} \quad\text{in }H^{1,a}(B_1^+),~C^{0,\alpha}(\overline{B_1^+})~\text{and } C^{1,\alpha}(B_1')
	\]
	as $i\to\infty$, where
	\[
		u_r^k(z):=\frac{u(rz)}{r^k}.
	\]
	 By Corollary \ref{cor:lim_monn} the functional $M_k(u,u_0,r)$ admits a limit as $r\to 0^+$, and therefore
	\[
		\lim_{r\to 0^+}M_k(u,u_0,r)=\lim_{i\to\infty}M_k(u,u_0,r_{n_i})=\lim_{i\to\infty}M_k(u_{r_{n_i}}^k,u_0,1)=0.
	\]
	Now we prove that the limit does not depend on the choice of the subsequence. Let $r_{n_i}\to 0^+$ another subsequence and $\tilde{u}_0$ another possbile limit. Again, $\tilde{u}_0$ will be an $a$-harmonic homogeneous polynomial of degree $k$ and even in $y$. Hence, analogously
	\[
		\lim_{r\to 0^+}M_k(u,\tilde{u}_0,r)=\lim_{i\to\infty}M_k(u,\tilde{u}_0,r_{n_i})=\lim_{i\to\infty}M_k(u_{r_{n_i}}^k,\tilde{u}_0,1)=0.
	\]
	 Furthermore
	\[
		\int_{S_1^+}y^a(u_0-\tilde{u}_0)^2\dxdy=M_k(u_0,\tilde{u}_0,r)\leq 2M_k(u,u_0,r)+ 2M_k(u,\tilde{u}_0,r)\to 0
	\]
	as $r\to 0^+$ and so $u_0=\tilde{u}_0$. The proof is concluded.
	
\end{proof}

\begin{remark}\label{rmk:homo_blow_up}
	First of all we point out that the functions $\tilde{u}$ and $u_0$, defined respectively in Theorem \ref{thm:blow_up} and Theorem \ref{thm:blow_up_homo}, are such that $u_0=\sqrt{\gamma}\,\tilde{u}$, where $\gamma=\lim_{r\to 0^+}r^{-2k}H(u,r)~>~0$.
	
	Moreover, since $u_0$ is an $a$-harmonic polynomial in $\R^{N+1}_+$ and since $\lim_{y\to 0^+}y^a\frac{\partial u_0}{\partial y}~=~0$, then $u_0$ cannot vanish everywhere on the thin space $\{y=0\}$, because otherwise its trivial extension to the whole $\R^{N+1}$ would violate the unique continuation principle for $A_2$-weighted elliptic equations, see \cite[Corollary 1.4]{Tao2008} (see also \cite[Lemma 5.2]{MR3916700}).
\end{remark}

\begin{remark}\label{rmk:def_blow_up_lim}
	We recall that
	\begin{align*}
		\mathcal{Z}(u)&=\{ x_0\in B_1'\colon u(x_0,0)=0 \}, \\
		\mathcal{Z}_k(u)&=\{x_0\in \mathcal{Z}(u)\colon\mathcal{N}_{x_0}(u,0^+)=k \}.
	\end{align*}
	Thanks to Theorem \ref{thm:blow_up_homo}, for any $x_0\in \mathcal{Z}(u)$ there exists a unique integer $k\geq 0$ and a unique $a$-harmonic polynomial $p_k^{x_0}\in \mathds{P}_k^a$, homogeneous of degree $k$ such that
	\[
		u_r^{k,x_0}(z):=\frac{u(rz+x_0)}{r^{k}}\to p_k^{x_0}(z)\quad\text{in }H^{1,a}(B_1^+).
	\]
	We refer to this polynomial as the \emph{blow-up limit} of $u$ at the point $x_0$. In particular, the fact that $u(x_0,0)=0$ readily implies that $k\geq 1$.
\end{remark}

We conclude the section by proving the continuous dependence of the blow-up limits on the nodal points. This result plays a pivotal role in the analysis of the structure of the free boundary.

\begin{theorem}\label{thm:cont_blow_up}
	Let $k \in\N$, $k\geq 1$. Then the map
	\begin{align*}
		\mathcal{Z}_k(u) & \longrightarrow \mathds{P}_k^a \\
		x_0 &\longmapsto p_k^{x_0}
	\end{align*}
	is continuous, where $\mathcal{Z}_1(u)=\mathcal{R}(u)$ and $p_k^{x_0}$ as in Remark \ref{rmk:def_blow_up_lim}. Moreover for any compact $K\sub \mathcal{Z}_k(u)$
	\begin{equation*}
		\norm{ u_r^{k,x_0}-p_k^{x_0}}_{L^\infty(B_1^+)}\to 0\quad\text{as }r\to 0^+,~\text{uniformly for }x_0\in K.
	\end{equation*}
	In particular there exists a modulus of continuity $\sigma_K(t)\geq 0$, $\sigma_K(0^+)=0$ such that
	\[
		\abs{u(z)-p_k^{x_0}(z-x_0)}\leq \sigma_K(\abs{z-x_0})\abs{z-x_0}^k,
	\]
	for any $x_0\in K$.
\end{theorem}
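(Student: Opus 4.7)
The plan is to prove the uniform $L^\infty$-convergence of $u_r^{k,x_0}$ to $p_k^{x_0}$ on any compact $K\subset\mathcal{Z}_k(u)$ first, and then deduce the continuity of $x_0\mapsto p_k^{x_0}$ and the modulus-of-continuity estimate as direct consequences. The key mechanism is the Monneau monotonicity formula (Corollary \ref{cor:lim_monn}) applied at a moving base point with a non-canonical admissible polynomial. Throughout, I will assume $K$ is a compact subset of $\mathcal{Z}_k(u)\cap B_1'$ whose $R_0$-neighbourhood stays inside $B_1'$, so that the rescalings are defined on $B_1^+$.

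First I would collect uniform a priori bounds. Corollary \ref{cor:growth_estim} gives $|u(x+x_0,0)|\leq\tilde{C}|x|^k$ with $\tilde{C}$ depending only on $N$, $a$, $\norm{g}_{L^\infty(S_1^+)}$, $k$ (uniformly in $x_0\in\mathcal{Z}_k(u)$). Applying Lemma \ref{lemma:regularity} and Lemma \ref{lemma:high_reg} to the rescaled solutions $u_r^{k,x_0}(z):=r^{-k}u(rz+x_0)$, whose rescaled nonlinear boundary datum is $r^{k(p-2)+1-a}$-small, shows that for some $r_0>0$ and $\alpha\in(0,1)$ the family $\{u_r^{k,x_0}\}_{x_0\in K,\,r\in(0,r_0)}$ is uniformly bounded in $C^{0,\alpha}(\overline{B_1^+})\cap H^{1,a}(B_1^+)$. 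Together with Proposition \ref{prop:nondegeneracy}, this also bounds $\{p_k^{x_0}\}_{x_0\in K}$ uniformly in the finite-dimensional space $\mathds{P}_k^a$.

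Next I would argue the uniform convergence by contradiction. Suppose there exist $\epsilon_0>0$, $x_n\in K$ and $r_n\to 0^+$ with $\norm{u_{r_n}^{k,x_n}-p_k^{x_n}}_{L^\infty(B_1^+)}\geq\epsilon_0$. By the bounds above and Ascoli-Arzelà, up to subsequences $x_n\to x_0\in K$, $p_k^{x_n}\to Q\in\mathds{P}_k^a$ and $u_{r_n}^{k,x_n}\to v$ in $C^0(\overline{B_1^+})$, so that $\norm{v-Q}_{L^\infty(B_1^+)}\geq\epsilon_0$. Arguing as in the proof of Theorem \ref{thm:blow_up} (the rescaled boundary term carries the factor $r_n^{k(p-2)+1-a}\to 0$, and the scale-invariant frequency satisfies $\mathcal{N}_0(u_{r_n}^{k,x_n},R)=\mathcal{N}_{x_n}(u,Rr_n)\to k$ by joint continuity of the frequency and $\mathcal{N}_{x_n}(u,0^+)=k$ for every $n$) one obtains $v\in\mathds{P}_k^a$. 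Now apply Corollary \ref{cor:lim_monn} at the base point $x_n$ with the admissible polynomial $v$:
\[
\norm{p_k^{x_n}-v}_{L^{2,a}(S_1^+)}^{2}=M_k^{x_n}(u,v(\cdot-x_n),0^+)\leq M_k^{x_n}(u,v(\cdot-x_n),r_n)+\hat{C}_1 r_n^{k(p-2)+1-a}.
\]
A direct change of variables (using the $k$-homogeneity of $v$) gives $M_k^{x_n}(u,v(\cdot-x_n),r_n)=H_0(u_{r_n}^{k,x_n}-v,1)$, which tends to $0$ as $n\to\infty$ by the uniform convergence $u_{r_n}^{k,x_n}\to v$. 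Passing to the limit yields $\norm{Q-v}_{L^{2,a}(S_1^+)}^{2}\leq 0$, hence $v=Q$, contradicting $\norm{v-Q}_{L^\infty(B_1^+)}\geq\epsilon_0$.

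The continuity of $x_0\mapsto p_k^{x_0}$ now follows immediately from the uniform convergence on $K=\{x_n\}\cup\{x_0\}$ via a three-term triangle inequality with $u_r^{k,x_n}$ and $u_r^{k,x_0}$ as intermediaries: the outer terms shrink by uniform convergence (for small $r$) and the middle term $\norm{u_r^{k,x_n}-u_r^{k,x_0}}_{\infty}$ by the continuity of $u$ (for large $n$). The modulus-of-continuity estimate follows by setting $r:=|z-x_0|$ in the identity
\[
u(z)-p_k^{x_0}(z-x_0)=r^k\left[u_r^{k,x_0}\!\left(\tfrac{z-x_0}{r}\right)-p_k^{x_0}\!\left(\tfrac{z-x_0}{r}\right)\right]
\]
and defining $\sigma_K(r):=\sup\{\norm{u_{r'}^{k,x_0}-p_k^{x_0}}_{L^\infty(B_1^+)}\colon x_0\in K,\ r'\in(0,r)\}$, which is nondecreasing and satisfies $\sigma_K(0^+)=0$ by the uniform convergence. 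I expect the main obstacle to be the identification $v\in\mathds{P}_k^a$ in the contradiction step: passing both the $a$-harmonicity and the degree-$k$ homogeneity to the joint limit $(x_n,r_n)\to(x_0,0)$ requires the uniform $C^{0,\alpha}$-estimates on the rescalings together with the joint continuity/monotonicity of the Almgren frequency; once $v\in\mathds{P}_k^a$ is secured, the Monneau monotonicity with the non-canonical polynomial $v$ is the clean mechanism that forces $v=Q$ and closes the argument.
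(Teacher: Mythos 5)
Your proposal is correct in outline and follows a genuinely different route from the paper's proof. The paper works in the opposite order: it first proves continuity of $x_0 \mapsto p_k^{x_0}$ directly, by comparing $M_k^{\bar{x}}(u, p_k^{x_0}, r)$ with $M_k^{x_0}(u, p_k^{x_0}, r)$ via the H\"older continuity of $u$ (triangle-inequality argument), and only then deduces the uniform $L^\infty$ bound by integrating the Monneau estimate in $r$ and feeding the resulting $L^{2,a}$ bound into the $L^\infty$ estimate of Lemma~\ref{lemma:tan2011}. Your argument instead proves the uniform convergence first, by a compactness/contradiction scheme whose key mechanism is the Monneau formula applied at the moving point $x_n$ with the \emph{non-canonical} polynomial $v$ (the diagonal blow-up limit), which forces $v = Q := \lim p_k^{x_n}$ and closes the contradiction; continuity and the modulus-of-continuity bound then follow as corollaries. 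Both approaches hinge on the Monneau almost-monotonicity and the fact that $\hat{C}_1$ is uniform on $K$, but the constructive ingredient differs: the paper compares two canonical blow-up polynomials at neighbouring base points, while you compare a canonical polynomial with an arbitrary limit in $\mathds{P}_k^a$. Your scheme trades the paper's explicit H\"older comparison for a compactness argument, which is conceptually cleaner but requires verifying that the diagonal blow-up limit $v$ is in $\mathds{P}_k^a$.

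Two points deserve to be spelled out more carefully. First, the identification $v \in \mathds{P}_k^a$ requires $\mathcal{N}_{x_n}(u, R r_n) \to k$ for every fixed $R$, and this does not follow from ``joint continuity'' alone: you need the almost monotonicity of $\tilde{\mathcal{N}}$ explicitly — the lower bound $\tilde{\mathcal{N}}_{x_n}(u, R r_n) \geq k$ is immediate from monotonicity plus $\tilde{\mathcal{N}}_{x_n}(u, 0^+) = k$, while the matching upper bound is obtained by fixing a small scale $\rho$ with $\tilde{\mathcal{N}}_{x_0}(u, \rho) < k + \eta$, propagating this to nearby base points by continuity at the fixed scale $\rho$, and then pushing it down to $R r_n < \rho$ via monotonicity. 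Second, the uniform $C^{0,\alpha}$ bound on $\{u_r^{k,x_0}\}$ is not inherited by scaling from the H\"older seminorm of $u$ (which would diverge like $r^{\alpha - k}$); it must be re-derived by applying the scaled Schauder estimates (Lemma~\ref{lemma:tan2011}, Lemma~\ref{lemma:high_reg}, \cite[Theorem 1.5]{Sire2019}) to the rescaled problem, using the uniform $L^{2,a}$ bound coming from Corollary~\ref{cor:bounds_H} $(i)$ and the vanishing factor $r^{k(p-2)+1-a}$ in the rescaled boundary condition; this yields estimates on a slightly smaller half-ball, so the rescaling radius should be chosen so that the estimates cover $\overline{B_1^+}$. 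With these two points made explicit, your argument is complete.
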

\begin{proof}
	First we observe that $\mathds{P}_k^a$ is a subset of finite dimensional space, namely the space of homogeneous polynomial of degree $k$, and so all the norms are equivalent: we choose to endow this space with the $L^{2,a}(S_1^+)$-norm. Thanks to Theorem \ref{thm:blow_up_homo}, for any $x_0\in \mathcal{Z}_k(u)$ and any $\epsilon>0$ there exists $r_\epsilon=r_\epsilon(x_0)\in (0,\epsilon)$ such that
	\begin{equation*}
		M_k^{x_0}(u,p_k^{x_0},r)=r^{-N-a-2k}\int_{S_{r}^+}y^a(u(z+x_0)-p_k^{x_0}(z))^2\ds <\epsilon
	\end{equation*}
	for all $r<r_\epsilon$.	Moreover, due to $\alpha$-H\"older continuity of $u$ and the previous inequality, there exists $\delta_\epsilon=\delta_\epsilon(x_0,k,\alpha)>0$ such that
	\begin{equation}\label{eq:mod_cont_6}
		M_k^{\bar{x}}(u,p_k^{x_0},r)=r^{-N-a-2k}\int_{S_{r}^+}y^a(u(z+\bar{x})-p_k^{x_0}(z))^2\ds <2\epsilon,
	\end{equation}
	for any $\bar{x}\in \mathcal{Z}_k(u)$ satisfying $\abs{\bar{x}-x_0}<\delta_\epsilon$ and for any $r<r_\epsilon$. Then, the almost monotonicity of $M_k^{\bar{x}}(u,p_k^{x_0},r)$ (see Corollary \ref{cor:lim_monn}) yields
	\begin{multline}\label{eq:mod_cont_1}
		\int_{S_1^+}y^a(p_k^{\bar{x}}-p_k^{x_0})^2\ds=\lim_{r\to 0^+}M_k^{\bar{x}}(u,p_k^{x_0},r)\\\leq M_k^{\bar{x}}(u,p_k^{x_0},r)+\hat{C}_1 r^{k(p-2)+1-a}<3\epsilon
	\end{multline}
	and for any $\bar{x}\in \mathcal{Z}_k(u)$ such that $\abs{\bar{x}-x_0}<\delta_\epsilon$ and for any $r<r_\epsilon$ (up to taking a smaller $r_\epsilon$, chosen independently of $\bar{x}$). This proves the first part of the statement.
	
	Now, let us fix a compact $K\sub \mathcal{Z}_k(u)$ and let $x_0\in K$. From now until the end of the proof, we denote by $C$ a positive constant, whose value may change from line to line, that is independent of $\epsilon$ and of the choice of $\bar{x}$. From \eqref{eq:mod_cont_6} we deduce that
	\[
	\int_{S_{r}^+}y^a(u(z+\bar{x})-p_k^{x_0}(z))^2\ds \leq 2\epsilon r^{N+a+2k}
	\]
	which, integrated in $r$ and up to a change of variable, yields,
	\begin{equation}\label{eq:mod_cont_7}
		\norm{u_r^{k,\bar{x}}-p_k^{x_0}}_{L^{2,a}(B_1^+)}^2\leq C \epsilon,
	\end{equation}
	for any $\bar{x}\in \mathcal{Z}_k(u)$ such that $\abs{\bar{x}-x_0}<\delta_\epsilon$, and for any $r<r_\epsilon$. By a change of variable in \eqref{eq:mod_cont_1} we obtain that
	\[
	\int_{S_r^+}y^a(p_k^{\bar{x}}-p_k^{x_0})^2\ds\leq C\epsilon r^{N+a+2k}.
	\]
	Again, integrating in $r$ and with a backwards change of variable, we can infer
	\begin{equation*}
		\norm{p_k^{\bar{x}}-p_k^{x_0}}_{L^{2,a}(B_1^+)}^2\leq C\epsilon
	\end{equation*}
	for any $\bar{x}\in \mathcal{Z}_k(u)$ such that $\abs{\bar{x}-x_0}<\delta_\epsilon$, and for any $r<r_\epsilon$. Combining the previous estimate with \eqref{eq:mod_cont_7} we have that
	\begin{equation}\label{eq:mod_cont_8}
		\norm{u_r^{k,\bar{x}}-p_k^{\bar{x}}}_{L^{2,a}(B_1^+)}\leq C\sqrt{\epsilon}
	\end{equation}
	for any $\bar{x}\in \mathcal{Z}_k(u)$ such that $\abs{\bar{x}-x_0}<\delta_\epsilon$, and for any $r<r_\epsilon$. We now observe that the function $w_r^{k,\bar{x}}:=u_r^{k,\bar{x}}-p_k^{\bar{x}}\in H^{1,a}(B_1^+)$ weakly solves
	\[
	\begin{bvp}
		-\dive(y^a\nabla w_r^{k,\bar{x}})&=0, &&\text{in }B_1^+, \\
		-\lim_{y\to 0^+}y^a\frac{\partial w_r^{k,\bar{x}}}{\partial y}&= r^{k(p-2)+1-a}(\lambda_-((u_r^{k,\bar{x}})^-)^{p-1}-\lambda_+((u_r^{k,\bar{x}})^+)^{p-1}), &&\text{on }B_1'.
	\end{bvp}
	\]
	If we let
	\[
	d:=r^{k(p-2)+1-a}(\lambda_-((u_r^{k,\bar{x}})^-)^{p-1}-\lambda_+((u_r^{k,\bar{x}})^+)^{p-1}),
	\]
	in view of Corollary \ref{cor:growth_estim} we have that
	\begin{equation*}
		\norm{d}_{L^\infty(B_1')}\leq C r^{k(p-2)+1-a}.
	\end{equation*}
	Therefore, from Lemma \ref{lemma:tan2011}, by virtue of the previous inequality and \eqref{eq:mod_cont_8}, we can say that
	\begin{align*}
		\norm{w_r^{k,\bar{x}}}_{L^\infty(B_{1/2}^+)}&\leq C\left(\norm{w_r^{k,\bar{x}}}_{L^{2,a}(B_1^+)}+\norm{d}_{L^\infty(B_1')}\right), \\
		&\leq C(\sqrt{\epsilon}+r^{k(p-2)+1-a})
	\end{align*}
	for any $\bar{x}\in \mathcal{Z}_k(u)$ such that $\abs{\bar{x}-x_0}<\delta_\epsilon$, and for any $r<r_\epsilon$. In particular we can choose $r_\epsilon$ sufficiently small (independently of $\bar{x}$) in such a way that
	\[
	\norm{u_r^{k,\bar{x}}-p_k^{\bar{x}}}_{L^{\infty}(B_{1/2}^+)}\leq C\sqrt{\epsilon}.
	\]
	The conclusion of the proof follows by a covering argument of the compact set $K$.
\end{proof}

\section{Regularity of the free boundary}\label{sec:free_boundary}

The goal of this last section is to prove the results regarding the structure of the free boundary, more precisely its rectifiability and estimates on the Hausdorff dimension of its parts. A first step is the proof of the regularity of the regular part $\mathcal{R}(u)$.

\begin{proposition}\label{prop:F_sigma}
	The regular part of the free boundary $\mathcal{R}(u)$ is relatively open in $\mathcal{Z}(u)$ while, for $k\geq 2$, the $k$-singular part $\mathcal{Z}_k(u)$ is of type $F_\sigma$, i.e. the union of countably many closed sets.
\end{proposition}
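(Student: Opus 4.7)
The plan is to deduce both statements from a single fact: the map $x_0\mapsto\mathcal{N}_{x_0}(u,0^+)$ is upper semicontinuous on $\mathcal{Z}(u)$ and takes values in the positive integers. Once this is established, the two assertions become essentially elementary topology.

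First, I would prove upper semicontinuity. By Theorem \ref{thm:monot_N_tilde} applied at each base point (the proofs of Section \ref{sec:monotonicity_form} are translation invariant), together with Proposition \ref{prop:lim_N}, we have
\[
\mathcal{N}_{x_0}(u,0^+)=\lim_{r\to 0^+}\tilde{\mathcal{N}}_{x_0}(u,r)=\inf_{0<r<R_0}\tilde{\mathcal{N}}_{x_0}(u,r),
\]
so it suffices to show that for every fixed $r$ small enough, the map $x_0\mapsto \tilde{\mathcal{N}}_{x_0}(u,r)$ is continuous on a neighborhood of any given point of $\mathcal{Z}(u)$; an infimum of continuous functions is automatically upper semicontinuous. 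Continuity of $x_0\mapsto\tilde{D}_{x_0}(u,r)$ follows from the continuity of translations in $L^{1}$ applied to the locally integrable densities $y^a|\nabla u|^2$ and $F(u)$; continuity of $x_0\mapsto H_{x_0}(u,r)$ follows from the regularity of $u$ (Lemma \ref{lemma:regularity}, Lemma \ref{lemma:high_reg}) together with a change of variables to the unit hemisphere, and the denominator is strictly positive by a verbatim adaptation of Lemma \ref{lemma:well_pos_H} (the Rellich--Nagy identity and unique continuation are available at any base point).

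Next, by Theorem \ref{thm:blow_up_homo} the value $\mathcal{N}_{x_0}(u,0^+)$ is a nonnegative integer, and by Remark \ref{rmk:homo_blow_up} the blow-up limit $p_k^{x_0}$ does not vanish identically on the thin space; combined with $u(x_0,0)=0$ this forces $\mathcal{N}_{x_0}(u,0^+)\geq 1$ for every $x_0\in\mathcal{Z}(u)$.

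With these two ingredients, both claims are immediate. For the regular part, since the frequency takes integer values bounded below by $1$,
\[
\mathcal{R}(u)=\mathcal{Z}_1(u)=\{x_0\in\mathcal{Z}(u)\colon \mathcal{N}_{x_0}(u,0^+)<2\},
\]
and the right-hand side is open in $\mathcal{Z}(u)$ by upper semicontinuity. For the singular strata with $k\geq 2$, upper semicontinuity and the integrality of $\mathcal{N}$ give
\[
\mathcal{Z}_k(u)=\{x_0\in\mathcal{Z}(u)\colon \mathcal{N}_{x_0}(u,0^+)\geq k\}\setminus\{x_0\in\mathcal{Z}(u)\colon \mathcal{N}_{x_0}(u,0^+)\geq k+1\},
\]
i.e.\ the difference of two closed subsets of $\mathcal{Z}(u)$. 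Writing the complement of the second (an open set in the metric space $\mathcal{Z}(u)$) as the countable union $\bigcup_n\{x_0\colon \operatorname{dist}(x_0,\{\mathcal{N}\geq k+1\})\geq 1/n\}$ of closed sets, and intersecting with the closed set $\{\mathcal{N}\geq k\}$, exhibits $\mathcal{Z}_k(u)$ as a countable union of closed sets, i.e.\ an $F_\sigma$ set.

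The main obstacle is the careful verification of continuity of $x_0\mapsto \tilde{\mathcal{N}}_{x_0}(u,r)$; the rest of the argument is a soft consequence of monotonicity (which furnishes the infimum representation) and of Theorem \ref{thm:blow_up_homo} (which gives the integrality of the frequency).
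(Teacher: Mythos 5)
Your proof is correct, and for the first claim (relative openness of $\mathcal{R}(u)$) it runs along the same lines as the paper: upper semicontinuity of $x_0\mapsto\mathcal{N}_{x_0}(u,0^+)$ plus the absence of frequency values in $(1,2)$. You do more than the paper in this direction, since you actually spell out the derivation of upper semicontinuity as the infimum over $r$ of the translation-continuous family $x_0\mapsto\tilde{\mathcal{N}}_{x_0}(u,r)$, using the monotonicity of Theorem \ref{thm:monot_N_tilde}; the paper simply asserts it.

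For the $F_\sigma$ property of $\mathcal{Z}_k(u)$ with $k\geq2$ your route genuinely differs from the paper's. The paper invokes Proposition \ref{prop:nondegeneracy} together with \cite[Lemma 1.5.3]{Garofalo2009}; in that tradition one writes $\mathcal{Z}_k(u)$ as a countable union of sets $E_j$ defined by quantitative bounds $\frac{1}{j}r^k\leq\sup_{S_r^+(x_0)}|u|\leq j\,r^k$ for $r<1/j$ and uses the growth and nondegeneracy estimates to show each $E_j$ is closed and that the union covers $\mathcal{Z}_k(u)$. You bypass all of this with a purely topological observation: since the frequency is integer-valued and upper semicontinuous, $\mathcal{Z}_k(u)=\{\mathcal{N}\geq k\}\setminus\{\mathcal{N}\geq k+1\}$ is a closed set minus a closed set, hence a closed set intersected with an open set, and in a metric space this is automatically $F_\sigma$ via the standard $\operatorname{dist}(\cdot,\cdot)\geq 1/n$ exhaustion. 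This is cleaner and does not require the nondegeneracy machinery at all. The quantitative route does yield a decomposition into sets on which the growth is uniformly controlled, which is useful in other parts of the free-boundary analysis (e.g.\ for the Whitney extension argument behind Lemma \ref{lemma:stratification_1}); but for the literal statement of Proposition \ref{prop:F_sigma} your argument is complete and simpler.
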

\begin{proof}
	The proof of the first claim immediately follows from the upper-semicontinuity of the map $x_0\mapsto \mathcal{N}(x_0,u,0^+)$, combined with the fact that the frequency function takes no values in the range $(1,2)$. In view of Proposition \ref{prop:nondegeneracy}, the proof of the second claim is the same as \cite[Lemma 1.5.3]{Garofalo2009}, so we omit it.
\end{proof}

\begin{proof}[Proof of Proposition \ref{prop:reg_part}]
	First of all, we recall that any $a$-harmonic, $1$-homogeneous polynomial $p_1(x,y)$ must be smooth (see e.g. \cite{Sire2019}) and so it must depend only on the variable $x$ and be harmonic in $x$. In particular $p_1(x,y)=x\cdot \nnu$ for some $\nnu\in\mathbb{S}^{N-1}=\mathbb{S}^N\cap \{y=0\}$. Then, from Theorem \ref{thm:blow_up_homo} we know that, for any $x_0\in \mathcal{R}(u)$ there exists $\nnu_{x_0}\in\mathbb{S}^{N-1}$ such that
	\[
		u(x,0)=(x-x_0)\cdot\nnu_{x_0}+o(\abs{x-x_0})
	\]
	as $x\to x_0$. Moreover, we deduce from Theorem \ref{thm:cont_blow_up}  that the map $x_0\mapsto \nnu_{x_0}$ is continuous on $\mathcal{R}(u)$. Since $u(\cdot,0)\in C^{1,\alpha}(B_1')$ we can compute the directional derivative and observe that $\nabla_{x}u(x_0,0)=\nnu_{x_0}$, thus proving that $\mathcal{R}(u)\sub\{x_0\in \mathcal{Z}(u)\colon \abs{\nabla_x u(x_0,0)}\neq 0 \}$. The opposite inclusion trivially comes from Theorem \ref{thm:blow_up_homo}. Finally, the implicit function theorem allows us to conclude the proof.
\end{proof}

The following result is a key step in the proof of Theorem \ref{thm:stratification}, and we refer to \cite[Theorem 7.7]{Sire2020} for the proof, which basically relies on Theorem \ref{thm:cont_blow_up}, Proposition \ref{prop:F_sigma}, Whitney's extension theorem (as stated in \cite{Whitney1934}) and the implicit function theorem.
\begin{lemma}\label{lemma:stratification_1}
	For any $k\geq 2$ and for any $n=0,\dots,N-1$, the sets $\mathcal{Z}_k^n(u)$ are contained in a countable union of $n$-dimensional $C^1$ manifolds.
\end{lemma}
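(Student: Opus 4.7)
The plan is to apply Whitney's extension theorem on compact subsets of $\mathcal{Z}_k^n(u)$ and then use the implicit function theorem to extract the $n$-dimensional $C^1$ manifold structure, exploiting the rank condition encoded by $d_k^{x_0}=n$. First, by Proposition \ref{prop:F_sigma} together with the upper-semicontinuity of $x_0\mapsto d_k^{x_0}$ (an immediate consequence of the continuity of the blow-up map from Theorem \ref{thm:cont_blow_up}), $\mathcal{Z}_k^n(u)$ is an $F_\sigma$ set. It therefore suffices to prove that any compact $K\subset\mathcal{Z}_k^n(u)$ is locally contained in an $n$-dimensional $C^1$ manifold, and then take a countable union.

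Fix such a $K$. The first main step is to construct $F\in C^k(\R^N)$ whose $k$-th order Taylor polynomial at every $x_0\in K$ coincides with $p_k^{x_0}(\,\cdot-x_0,0)$. To do so, prescribe the Whitney data
\[
f_\alpha(x_0):=\partial^\alpha p_k^{x_0}(0,0),\qquad |\alpha|\leq k,
\]
so that $f_\alpha\equiv 0$ on $K$ for $|\alpha|<k$ (by $k$-homogeneity), while $\{f_\alpha\}_{|\alpha|=k}$ records the coefficients of $p_k^{x_0}(\cdot,0)$. The Whitney compatibility conditions
\[
\partial^\alpha p_k^{x_1}(0,0)-\partial^\alpha p_k^{x_0}(x_1-x_0,0)=o(|x_1-x_0|^{k-|\alpha|})
\]
(uniformly as $x_0,x_1\in K$ with $|x_1-x_0|\to 0$) are then verified using Theorem \ref{thm:cont_blow_up}: the base-order case follows from $u(x_1,0)=0$ and the uniform modulus estimate, the top-order case from the continuity of $x_0\mapsto p_k^{x_0}$, and the intermediate orders by combining the $C^{1,\alpha}$ convergence on $B_1'$ with the finite-dimensionality of $\mathds{P}_k^a$ (on which all norms are equivalent).

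The second main step is to consider
\[
G\colon\R^N\to\R^M,\qquad G(x):=(\partial^\alpha F(x))_{|\alpha|=k-1},
\]
where $M$ denotes the number of multi-indices of size $k-1$. This map is $C^1$, vanishes identically on $K$, and its differential acts on $\xi\in\R^N$ by $(dG(x_0)\xi)_\alpha=\partial^\alpha(\partial_\xi p_k^{x_0})(0,0)$. Since a homogeneous polynomial of degree $k-1$ in $x\in\R^N$ is identically zero if and only if all its $(k-1)$-th derivatives at the origin vanish, one concludes
\[
\ker dG(x_0)=\{\xi\in\R^N:\nabla_x p_k^{x_0}(x,0)\cdot\xi=0~\text{for all}~x\in\R^N\},
\]
whose dimension is $d_k^{x_0}=n$ by definition. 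Hence $dG(x_0)$ has constant rank $N-n$ on $K$, and selecting $N-n$ scalar components of $G$ whose gradients at $x_0$ are linearly independent, the implicit function theorem produces a local $n$-dimensional $C^1$ submanifold containing $K$ near $x_0$.

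The main technical obstacle I expect lies in the uniform verification of the intermediate-order Whitney compatibility conditions ($0<|\alpha|<k$), which requires passing from the pointwise approximation of $u$ by $p_k^{x_0}$ to bounds on the corresponding polynomial derivatives; the finite-dimensionality of $\mathds{P}_k^a$ and the $C^{1,\alpha}$-convergence furnished by Theorem \ref{thm:cont_blow_up} make this feasible, although some bookkeeping between the derivatives of the blow-up at $x_0$ and at $x_1$ is needed. The remainder of the argument is a standard combination of Whitney's extension, the implicit function theorem, and a countable covering of $\mathcal{Z}_k^n(u)$.
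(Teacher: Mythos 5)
Your proof is correct and follows the same route as the paper's (which simply invokes \cite[Theorem 7.7]{Sire2020}): construct Whitney data from the Taylor coefficients of the blow-up polynomials, verify the compatibility conditions using the uniform modulus estimate in Theorem \ref{thm:cont_blow_up} together with the equivalence of norms on the finite-dimensional space $\mathds{P}_k^a$, apply Whitney's extension theorem, and then use the implicit function theorem on the map $G=(\partial^\alpha F)_{|\alpha|=k-1}$ via the rank identity $\ker dG(x_0)=\{\xi:\nabla_x p_k^{x_0}(\cdot,0)\cdot\xi\equiv 0\}$. One small remark: for the intermediate-order compatibility conditions, what is actually used is the uniform $L^\infty$ bound $|u(z)-p_k^{x_0}(z-x_0)|\leq\sigma_K(|z-x_0|)|z-x_0|^k$ from Theorem \ref{thm:cont_blow_up} applied at both $x_0$ and $x_1$, combined with equivalence of norms on polynomials of degree $\leq k$, rather than the $C^{1,\alpha}$ convergence per se; but this ingredient is already in your toolbox and the argument goes through.
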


As a consequence, we are now able to prove Theorem \ref{thm:stratification}.

\begin{proof}[Proof of Theorem \ref{thm:stratification}]
	For any fixed $n\geq 0$, the sets $\mathcal{Z}_{k_1}^y(u)\cap\mathcal{Z}_{k_1}^n(u)$ and $\mathcal{Z}_{k_2}^y(u)\cap\mathcal{Z}_{k_2}^n(u)$ are disjoint for $k_1\neq k_2$; therefore
	\[
	\mathcal{S}_n^y(u)=S^y(u)\cap \big(\bigcup_{k\geq 2}\mathcal{Z}_k^n(u)  \big)
	\]
	and so it is contained in a countable union of $n$-dimensional $C^1$ manifold, thanks to Lemma \ref{lemma:stratification_1}. Then, taking the union for $n=0,\dots,N-1$, we obtain the desired result for $\mathcal{S}^y(u)$. The same applies for the stratum $\mathcal{S}^*(u)$, but in this case the dimension at any point cannot exceed the threshold $N-2$, since the polynomials in $\mathds{P}_k^*$ are harmonic in $\R^N$.
\end{proof}

The last part of this section is dedicated to the proof of Theorem \ref{thm:hausd}, regarding the Hausdorff dimension of the nodal set $\mathcal{Z}(u)$, and of its regular and singular part. The main tool we exploit is a generalization of the Federer's Reduction Principle. The basic idea is the following. Consider a family of functions $\mathcal{F}$ defined on $\R^N$ invariant under rescalings and translations, and a map $\Sigma$ which associates to every function in $\mathcal{F}$ a subset of $\R^N$. This principles establishes that, under suitable conditions on $\mathcal{F}$ and $\Sigma$, in order to control the Hausdorff dimension of $\Sigma(f)$ for every $f\in\mathcal{F}$, you just need to control the Hausdorff dimension of $\Sigma(f)$ for the functions $f$ that are homogeneous of some degree. To the best of our knowledge, this principle (originally proved by Federer) appears for the first time in the form we need in \cite[Appendix A]{Simon1983}. We make use of the following version of the principle, which is a particular case of the generalization made by Chen, see \cite[Theorem 8.5]{Chen1998a} and \cite[Proposition 4.5]{Chen1998b}.

\begin{theorem}[Federer's reduction principle]\label{thm:federer}
	Let $\mathcal{F}\sub C^{1,\alpha}_{\textup{loc}}(\R^N)$ and let, for any $u\in \mathcal{F}$, $x_0\in \R^N$ and $r>0$
	\[
	u_{r}^{x_0}(x):=u(x_0+rx).
	\]
	We say that $u_n\to u$ in $\mathcal{F}$ as $n\to\infty$ if $u_n\to u$ in $C^{1,\alpha}_{\textup{loc}}(\R^N)$. Let us assume that the family $\mathcal{F}$ satisfies the following conditions:
	\begin{itemize}
		\item[(F1)] \textnormal{(closure under appropriate translations and rescalings)} for any $r,\rho\in(0,1)$, $x_0\in B_{1-r}'$ and $u\in \mathcal{F}$ we have that $\rho u_{r}^{x_0}\in \mathcal{F}$;
		\item[(F2)] \textnormal{(existence of a homogeneous blow-up limit)} for any $x_0\in B_1'$, $r_n\to 0^+$ and $u\in \mathcal{F}$, there exists a sequence $\rho_n>0$, a real number $\alpha \geq 0$ and an $\alpha$-homogeneous function $\hat{u}\in \mathcal{F}$ such that, up to a subsequence $\rho_n u_{r_n}^{x_0}\to \hat{u}$ in $\mathcal{F}$;
		\item[(F3)] \textnormal{(singular set hypotheses)} there exists a map $\Sigma\colon \mathcal{F}\to \mathcal{C}$, where
		\[
		\mathcal{C}:=\{A\sub\R^N\colon A\cap B_1'~\text{is relatively closed in }B_1'  \},
		\]
		such that
		\begin{itemize}
			\item [(i)] for any $r,\rho \in(0,1)$, $x_0\in B_{1-r}'$ and $u\in\mathcal{F}$ we have that
			\[
			\Sigma(\rho u_{r}^{x_0})=\frac{\Sigma(u)-x_0}{r};
			\]
			\item[(ii)] for any $x_0\in B_1'$, $r_n\to 0^+$, $u,\hat{u}\in\mathcal{F}$ such that there exists $\rho_n>0$ for which, up to a subsequence, $\rho_n u_{r_n}^{x_0}\to \hat{u}$ in $\mathcal{F}$, the following holds true:
			\begin{gather*}
			\text{for any }\epsilon>0~\text{there exists }n_\epsilon>0~\text{such that } \\
			\Sigma(\rho_n u_{r_n}^{x_0})\sub \{ x\in B_1' \colon \mathrm{dist}(x,\Sigma(\hat{u}))\leq \epsilon \}\quad\text{for all }n\geq n_\epsilon.
			\end{gather*}
		\end{itemize}
	\end{itemize}
	Then either $\Sigma(u)=\emptyset$ for every $u\in\mathcal{F}$ or $\dim_H (\Sigma(u))\leq d$ for every $u\in\mathcal{F}$, where
	\begin{align*}
	d:=\max\{ \dim V\colon &V~\text{is a subspace of }\R^N~\text{and there exists }\mu\geq 0~\text{and }u\in\mathcal{F} \\
	&\text{such that }\Sigma(u)\neq \emptyset~\text{and }u_{r}^{x_0}=r^\mu u~\text{for all }x_0\in V,~r>0 \},
	\end{align*}
	Furthermore, in the latter case there exists a function $\psi\in\mathcal{F}$, a $d$-dimensional subspace $V\sub \R^N$ and a real number $\mu\geq 0$ such that
	\[
	\psi^b_r=r^\mu \psi~\text{for all }b\in V,~r>0~\text{and }\Sigma(\psi)=V\cap B_1.
	\]
	Finally, if $d=0$ the set $\Sigma(u)\cap B_r'$ is discrete for every $u\in \mathcal{F}$ and $r\in(0,1)$.
\end{theorem}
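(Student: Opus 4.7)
My plan is a contrapositive dimension induction, following the template originally introduced by Federer and later abstracted by Simon and Chen. I would set
\[
d^* := \sup\{\dim_H(\Sigma(u) \cap B_1') : u \in \mathcal{F}\},
\]
with the convention that the supremum is $-\infty$ if $\Sigma(u) \cap B_1' = \emptyset$ for every $u \in \mathcal{F}$, and aim to prove $d^* \leq d$. If the supremum is $-\infty$ then the first alternative of the conclusion holds and we are done; otherwise I need to produce an extremal $\psi$ realizing $\Sigma(\psi) = V \cap B_1'$ for a $d$-dimensional subspace $V$, which will come out of tracking the blow-up extractions.

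The heart of the proof is the following reduction. Pick $u \in \mathcal{F}$ and $s < d^*$ with $\dim_H(\Sigma(u) \cap B_1') > s$. A standard Hausdorff-measure density lemma produces a point $x_0 \in \Sigma(u) \cap B_1'$ at which
\[
\limsup_{r \to 0^+} r^{-s}\, \mathcal{H}^s_\infty\bigl(\Sigma(u) \cap B_r'(x_0)\bigr) > 0.
\]
Let $r_n \to 0^+$ achieve the limsup, and apply (F2) to extract $\rho_n > 0$, a homogeneity degree $\alpha \geq 0$, and $\hat u \in \mathcal{F}$ with $\rho_n u_{r_n}^{x_0} \to \hat u$. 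By (F3)(i),
\[
\Sigma(\rho_n u_{r_n}^{x_0}) \cap B_1' = r_n^{-1}\bigl(\Sigma(u) - x_0\bigr) \cap B_1',
\]
so the density choice guarantees that the rescaled sets carry uniformly positive $\mathcal{H}^s_\infty$-mass on $B_1'$. Combining this with the upper-semicontinuity property (F3)(ii) and a standard fact about the behaviour of $\mathcal{H}^s_\infty$ under one-sided set convergence yields $\dim_H(\Sigma(\hat u) \cap B_1') \geq s$. Since $\hat u$ is $\alpha$-homogeneous, (F3)(i) with $x_0 = 0$ forces $\Sigma(\hat u)$ to be a cone at the origin. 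I would then iterate: if $\Sigma(\hat u)$ is not already a linear subspace, choose $x_1 \in \Sigma(\hat u) \cap B_1' \setminus \{0\}$, apply (F2) at $x_1$ to produce a further limit $\hat u_1 \in \mathcal{F}$, and use the conical structure together with the translation identity in (F3)(i) to conclude that $\Sigma(\hat u_1)$ is invariant under the one-parameter group of translations along $\mathbb{R} x_1$ and under the original dilation, while still having Hausdorff dimension $\geq s$. Iterating at most $\lfloor s \rfloor + 1$ times produces some $\hat u_k \in \mathcal{F}$ and a subspace $V$ of dimension at least $s$ with $(\hat u_k)_r^{x_0} = r^{\mu} \hat u_k$ for every $x_0 \in V$, $r > 0$, and some $\mu \geq 0$; by the very definition of $d$ this forces $s \leq d$, and letting $s \nearrow d^*$ gives $d^* \leq d$.

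The step I expect to be the main obstacle is justifying the dimension transfer to the blow-up limit, because (F3)(ii) provides only one-sided (upper) semicontinuity of the singular sets, not full Hausdorff convergence. I would address this by a Vitali-type covering argument: if the rescaled sets $r_n^{-1}(\Sigma(u) - x_0) \cap B_1'$ carry at least mass $c > 0$ in $\mathcal{H}^s_\infty$ for all large $n$, any efficient $s$-cover of $\Sigma(\hat u)$ by balls can be inflated by $\varepsilon$ and, for $n$ large, by (F3)(ii) covers the rescaled set, which then transfers the lower bound on $\mathcal{H}^s_\infty$ from the rescalings to $\Sigma(\hat u)$ itself. With this transfer lemma in hand, the extremal $\psi$ of the statement is extracted from the terminal iteration $\hat u_k$. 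The discrete case $d = 0$ is then an immediate corollary: any accumulation point of $\Sigma(u) \cap B_r'$ would, after a single blow-up and the conical/homogeneity argument above, yield an $\hat u \in \mathcal{F}$ whose singular set contains a full line through the origin, producing a $1$-dimensional invariance subspace and contradicting $d = 0$.
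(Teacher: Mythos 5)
The paper does not supply a proof of Theorem~\ref{thm:federer}: it is imported verbatim as a particular case of Chen's results (cited in the paragraph preceding the statement) and used as a black box in the proof of Theorem~\ref{thm:hausd}. Your sketch is therefore to be judged on its own, against the standard Federer--Almgren--Simon dimension-reduction template, which is precisely what the cited proofs implement. Your outline captures the right architecture: density-point extraction, blow-up via (F2), transfer of the $\mathcal{H}^s_\infty$ lower bound through the one-sided containment in (F3)(ii), and iterated blow-ups at off-origin points to build up a translation-invariance subspace. The semicontinuity obstacle you flag is the real one, and your compactness resolution is the standard fix (it should be run on a compact $\overline{B_\rho'}$ with $\rho<1$, so a finite efficient cover can be extracted before inflating; on the open ball $B_1'$ the argument is not literally correct, but this is bookkeeping handled by exhaustion).

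The genuine gap is in the iteration step. ``Choose $x_1\in\Sigma(\hat u)\cap B_1'\setminus\{0\}$ and apply (F2)'' does not suffice, for two distinct reasons. First, $x_1$ must again be a point of positive upper $\mathcal{H}^s$-density of $\Sigma(\hat u)$ --- the density lemma has to be invoked at \emph{every} level of the iteration, not only at the base point $x_0$. Since $\hat u$ is homogeneous, $\Sigma(\hat u)$ is a cone and contains the ray through any nonzero point, but that cone can perfectly well be $\mathcal{H}^s$-null near a given ray; blowing up there need not preserve $\dim_H\geq s$. Second --- and this is what actually drives the induction --- $x_1$ must be chosen \emph{outside} the current invariance subspace $V_{\hat u}$; if $x_1\in V_{\hat u}$ the blow-up $\hat u_1$ inherits exactly the same invariance subspace and the iteration stalls, so the claimed termination in $\lfloor s\rfloor+1$ steps is unjustified. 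The reason such a choice is possible is the contrapositive hypothesis $s>d\geq\dim V_{\hat u}$: a set of Hausdorff dimension $\geq s$ cannot concentrate on a subspace of dimension $<s$, so $\Sigma(\hat u)\setminus V_{\hat u}$ has positive $\mathcal{H}^s$-measure and the density lemma applies to it. Once both requirements are imposed, the invariance dimension strictly increases at each step (invariance along previously acquired directions is preserved under further blow-up, and the homogeneity of the new limit supplies the fresh direction $\mathbb{R}x_1$), the iteration terminates, and the extremal $\psi$ of the ``furthermore'' clause is obtained as you describe. Without these two selection conditions the argument does not close.
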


Let us consider the following class of problems, to be intended in a weak sense
\begin{equation}\label{eq:class_pbm}
	\begin{bvp}
	-\dive(y^a\nabla u)&=0, &&\text{in }B_R^+, \\
	-\lim_{y\to 0^+}y^a\frac{\partial u}{\partial y}&=\lambda\left( \lambda_-(u^-)^{p-1}-\lambda_+(u^+)^{p-1} \right), &&\text{on }B_R',
	\end{bvp}
\end{equation}
for $R\geq 1$, $\lambda\geq 0$ and let us introduce the following family of functions:
\begin{multline}\label{eq:family_F}
	\mathcal{F}:=\bigg\{ u(\cdot,0) \colon u\in C^{0,\alpha}_{\textup{loc}}(\R^{N+1}_+),~\nabla_x u\in C^{0,\alpha}_{\textup{loc}}(\R^N),~u(\cdot,0)\not\equiv 0~\text{in }\R^N\\
	\text{and solves \eqref{eq:class_pbm} for some }R\geq 1~\text{and }\lambda\geq 0  \bigg\}.
\end{multline}

\begin{remark}\label{rmk:F_homog}
	We observe that if $u\in \mathcal{F}$ is homogeneous of degree $\mu> 0$, then $\lambda=0$, because the function $t\mapsto \lambda_-(t^-)^{p-1}-\lambda_+(t^+)^{p-1}$ is homogeneous of degree $p-1$ with $p\geq 2$. This means that the even reflection of such $u$ is harmonic in $B_1$.
\end{remark}

Now we are ready to prove the last main result of our paper.

\begin{proof}[Proof of Theorem \ref{thm:hausd}]
Let $\mathcal{F}$ be as in \eqref{eq:family_F}. It is straightforward to check that assumptions (F1) and (F2) in Theorem \ref{thm:federer} are satisfied by $\mathcal{F}$, in view also of Theorem \ref{thm:blow_up_homo}. Then, we choose the map $\Sigma$ depending on the claim.

\noindent\textbf{Hausdorff dimension of $\mathcal{Z}(u)$.} In order to prove \eqref{eq:hausd_th_1}, we let $\Sigma(u):=\mathcal{Z}(u)$. It's easy to prove that the singular set hypotheses in (F3) are fulfilled by this choice of the map $\Sigma$. Therefore Theorem \ref{thm:federer} applies and we have that either $\mathcal{Z}(u)=\emptyset$ or $\dim_H(\mathcal{Z}(u))\leq d$. Assume by contradiction that $d=N$. Then there exists a function $\hat{u}\in \mathcal{F}$, homogeneous of some degree $\mu>0$, that, in view of Remark \ref{rmk:F_homog}, weakly solves
\[
	\begin{bvp}
	-\dive(y^a\nabla\hat{u})&=0, &&\text{in }B_1^+, \\
	\hat{u}&=0, &&\text{on }B_1', \\
	-\lim_{y\to 0^+}y^a\frac{\partial \hat{u}}{\partial y}&=0, &&\text{on }B_1'.
	\end{bvp}
\]
We notice that it must be $\hat{u}\equiv 0$, because otherwise its trivial extension to the whole $B_1\sub\R^{N+1}$ would violate the unique continuation principle. But this is a contradiction, since $0\notin \mathcal{F}$.

\noindent\textbf{Hausdorff dimension of $\mathcal{R}(u)$.} \eqref{eq:hausd_th_2} is an immediate consequence of Proposition \ref{prop:reg_part}.

\noindent\textbf{Hausdorff dimension of $\mathcal{S}(u)$.} Finally, we let $\Sigma(u):=\mathcal{S}(u)$. Again, it's not hard to verify the hypotheses in (F3), hence we can apply the theorem in this case as well. Moreover, since $\mathcal{S}(u)\sub \mathcal{Z}(u)$, we have that $\dim_H(\mathcal{S}(u))\leq N-1$. In fact, this bound is optimal. In order to see this, we consider \cite[Proposition 4.13]{Sire2020}. There, the authors found explicit expressions for $2$-dimensional $a$-harmonic polynomials, homogeneous of any possible integer degree. In particular, for $(s,t)\in\R^2$ and $k\in 2\N$, the following is a $k$-homogeneous, $a$-harmonic polynomial of degree $k$, even in the variable $t$
\[
p(s,t)=\frac{(-1)^{\frac{k}{2}}\Gamma\left(\frac{1}{2}+\frac{a}{2}\right)}{2^k\Gamma\left(1+\frac{k}{2}\right)\Gamma\left(\frac{1}{2}+\frac{a}{2}+\frac{k}{2}\right)}\, {}_2 F_1\left( -\frac{k}{2},-\frac{k}{2}-\frac{a}{2}+\frac{1}{2},\frac{1}{2},-\frac{s^2}{t^2} \right)t^k,
\]
where $\Gamma$ is the usual Gamma function and ${}_2 F_1$ is the hypergeometric function. To conclude the proof of \eqref{eq:hausd_th_3}, it is sufficient to consider $V=\{x\in\R^N\colon x_N=0 \}$ and $\hat{u}(x,y)=p(x_N,y)$.

\end{proof}

\section*{Acknowledgments}
\noindent
We sincerely thanks the anonymous referee for the careful reading of our manuscript and for the useful suggestions, which helped to improve our work. R. Ognibene acknowledges support from the MIUR-PRIN project No. 2017TEXA3H. R. Ognibene is member of the GNAMPA group of the Istituto Nazionale di Alta Matematica (INdAM). This work started during a visit of R. Ognibene at Purdue University (Department of Mathematics), which he warmly thanks for the hospitality.

\bibliography{biblio}

\begin{thebibliography}{10}

\bibitem{Abatangelo20191}
{\sc Abatangelo, N., and Valdinoci, E.}
\newblock Getting acquainted with the fractional laplacian.
\newblock {\em Springer INdAM Series 33\/} (2019), 1--105.

\bibitem{Allen2012b}
{\sc Allen, M.}
\newblock Separation of a lower dimensional free boundary in a two-phase
  problem.
\newblock {\em Math. Res. Lett. 19}, 5 (2012), 1055--1074.

\bibitem{Allen2019}
{\sc Allen, M.}
\newblock A fractional free boundary problem related to a plasma problem.
\newblock {\em Communications in Analysis and Geometry 27}, 8 (2019),
  1665--1696.

\bibitem{Allen2015}
{\sc Allen, M., Lindgren, E., and Petrosyan, A.}
\newblock The two-phase fractional obstacle problem.
\newblock {\em SIAM J. Math. Anal. 47}, 3 (2015), 1879--1905.

\bibitem{Allen2012a}
{\sc Allen, M., and Petrosyan, A.}
\newblock A two-phase problem with a lower-dimensional free boundary.
\newblock {\em Interfaces Free Bound. 14}, 3 (2012), 307--342.

\bibitem{AC2004}
{\sc Athanasopoulos, I., and Caffarelli, L.~A.}
\newblock Optimal regularity of lower dimensional obstacle problems.
\newblock {\em Zap. Nauchn. Sem. S.-Peterburg. Otdel. Mat. Inst. Steklov.
  (POMI) 310}, Kraev. Zadachi Mat. Fiz. i Smezh. Vopr. Teor. Funkts. 35 [34]
  (2004), 49--66, 226.

\bibitem{ACS2008}
{\sc Athanasopoulos, I., Caffarelli, L.~A., and Salsa, S.}
\newblock The structure of the free boundary for lower dimensional obstacle
  problems.
\newblock {\em Amer. J. Math. 130}, 2 (2008), 485--498.

\bibitem{Bucur2016}
{\sc Bucur, C., and Valdinoci, E.}
\newblock {\em Nonlocal diffusion and applications}, vol.~20 of {\em Lecture
  Notes of the Unione Matematica Italiana}.
\newblock Springer, [Cham]; Unione Matematica Italiana, Bologna, 2016.

\bibitem{Cabre2014}
{\sc Cabr\'{e}, X., and Sire, Y.}
\newblock Nonlinear equations for fractional {L}aplacians, {I}: {R}egularity,
  maximum principles, and {H}amiltonian estimates.
\newblock {\em Ann. Inst. H. Poincar\'{e} Anal. Non Lin\'{e}aire 31}, 1 (2014),
  23--53.

\bibitem{Caffarelli2007}
{\sc Caffarelli, L., and Silvestre, L.}
\newblock An extension problem related to the fractional {L}aplacian.
\newblock {\em Comm. Partial Differential Equations 32}, 7-9 (2007),
  1245--1260.

\bibitem{Caffarelli2008}
{\sc Caffarelli, L.~A., Salsa, S., and Silvestre, L.}
\newblock Regularity estimates for the solution and the free boundary of the
  obstacle problem for the fractional {L}aplacian.
\newblock {\em Invent. Math. 171}, 2 (2008), 425--461.

\bibitem{Chen2020}
{\sc Chen, W., Li, Y., and Ma, P.}
\newblock {\em The fractional {L}aplacian}.
\newblock World Scientific Publishing Co. Pte. Ltd., Hackensack, NJ, [2020]
  \copyright 2020.

\bibitem{Chen1998b}
{\sc Chen, X.-Y.}
\newblock On the scaling limits at zeros of solutions of parabolic equations.
\newblock {\em J. Differential Equations 147}, 2 (1998), 355--382.

\bibitem{Chen1998a}
{\sc Chen, X.-Y.}
\newblock A strong unique continuation theorem for parabolic equations.
\newblock {\em Math. Ann. 311}, 4 (1998), 603--630.

\bibitem{Colombo2020}
{\sc Colombo, M., Spolaor, L., and Velichkov, B.}
\newblock Direct epiperimetric inequalities for the thin obstacle problem and
  applications.
\newblock {\em Comm. Pure Appl. Math. 73}, 2 (2020), 384--420.

\bibitem{Danielli2021}
{\sc Danielli, D., and Jain, R.}
\newblock Regularity results for a penalized boundary obstacle problem.
\newblock {\em Math. Eng. 3}, 1 (2021), Paper No. 7, 23.

\bibitem{Danielli2018}
{\sc Danielli, D., and Salsa, S.}
\newblock Obstacle problems involving the fractional {L}aplacian.
\newblock In {\em Recent developments in nonlocal theory}. De Gruyter, Berlin,
  2018, pp.~81--164.

\bibitem{DSV2021}
{\sc De~Philippis, G., Spolaor, L., and Velichkov, B.}
\newblock Regularity of the free boundary for the two-phase {B}ernoulli
  problem.
\newblock {\em Invent. Math. 225}, 2 (2021), 347--394.

\bibitem{Dinezza2012}
{\sc Di~Nezza, E., Palatucci, G., and Valdinoci, E.}
\newblock Hitchhiker's guide to the fractional {S}obolev spaces.
\newblock {\em Bull. Sci. Math. 136}, 5 (2012), 521--573.

\bibitem{Fabes1982}
{\sc Fabes, E.~B., Kenig, C.~E., and Serapioni, R.~P.}
\newblock The local regularity of solutions of degenerate elliptic equations.
\newblock {\em Comm. Partial Differential Equations 7}, 1 (1982), 77--116.

\bibitem{Fall2014}
{\sc Fall, M.~M., and Felli, V.}
\newblock Unique continuation property and local asymptotics of solutions to
  fractional elliptic equations.
\newblock {\em Comm. Partial Differential Equations 39}, 2 (2014), 354--397.

\bibitem{Figalli2020}
{\sc Figalli, A., Ros-Oton, X., and Serra, J.}
\newblock Generic regularity of free boundaries for the obstacle problem.
\newblock {\em Publ. Math. Inst. Hautes \'{E}tudes Sci. 132\/} (2020),
  181--292.

\bibitem{Figalli2019}
{\sc Figalli, A., and Serra, J.}
\newblock On the fine structure of the free boundary for the classical obstacle
  problem.
\newblock {\em Invent. Math. 215}, 1 (2019), 311--366.

\bibitem{Fotouhi2017}
{\sc Fotouhi, M., and Shahgholian, H.}
\newblock A semilinear {PDE} with free boundary.
\newblock {\em Nonlinear Anal. 151\/} (2017), 145--163.

\bibitem{MR3916700}
{\sc Garofalo, N.}
\newblock Fractional thoughts.
\newblock In {\em New developments in the analysis of nonlocal operators},
  vol.~723 of {\em Contemp. Math.} Amer. Math. Soc., Providence, RI, 2019,
  pp.~1--135.

\bibitem{Garofalo2009}
{\sc Garofalo, N., and Petrosyan, A.}
\newblock Some new monotonicity formulas and the singular set in the lower
  dimensional obstacle problem.
\newblock {\em Invent. Math. 177}, 2 (2009), 415--461.

\bibitem{MR3491531}
{\sc Garofalo, N., Petrosyan, A., and Smit Vega~Garcia, M.}
\newblock An epiperimetric inequality approach to the regularity of the free
  boundary in the {S}ignorini problem with variable coefficients.
\newblock {\em J. Math. Pures Appl. (9) 105}, 6 (2016), 745--787.

\bibitem{GRO2019}
{\sc Garofalo, N., and Ros-Oton, X.}
\newblock Structure and regularity of the singular set in the obstacle problem
  for the fractional {L}aplacian.
\newblock {\em Rev. Mat. Iberoam. 35}, 5 (2019), 1309--1365.

\bibitem{Murthy1968}
{\sc Murthy, M. K.~V., and Stampacchia, G.}
\newblock Boundary value problems for some degenerate-elliptic operators.
\newblock {\em Ann. Mat. Pura Appl. (4) 80\/} (1968), 1--122.

\bibitem{Nekvinda1993}
{\sc Nekvinda, A.}
\newblock Characterization of traces of the weighted {S}obolev space
  {$W^{1,p}(\Omega,d^\epsilon_M)$} on {$M$}.
\newblock {\em Czechoslovak Math. J. 43(118)}, 4 (1993), 695--711.

\bibitem{PSU2012}
{\sc Petrosyan, A., Shahgholian, H., and Uraltseva, N.}
\newblock {\em Regularity of free boundaries in obstacle-type problems},
  vol.~136 of {\em Graduate Studies in Mathematics}.
\newblock American Mathematical Society, Providence, RI, 2012.

\bibitem{Ruland2017}
{\sc R\"{u}land, A.}
\newblock On quantitative unique continuation properties of fractional
  {S}chr\"{o}dinger equations: doubling, vanishing order and nodal domain
  estimates.
\newblock {\em Trans. Amer. Math. Soc. 369}, 4 (2017), 2311--2362.

\bibitem{Ruland2018}
{\sc R\"{u}land, A.}
\newblock Unique continuation for sublinear elliptic equations based on
  {C}arleman estimates.
\newblock {\em J. Differential Equations 265}, 11 (2018), 6009--6035.

\bibitem{Simon1980a}
{\sc Schechter, M., and Simon, B.}
\newblock Unique continuation for {S}chr\"{o}dinger operators with unbounded
  potentials.
\newblock {\em J. Math. Anal. Appl. 77}, 2 (1980), 482--492.

\bibitem{Signorini1959}
{\sc Signorini, A.}
\newblock Questioni di elasticit\`a non linearizzata e semilinearizzata.
\newblock {\em Rend. Mat. e Appl. (5) 18\/} (1959), 95--139.

\bibitem{Simon1983}
{\sc Simon, L.}
\newblock {\em Lectures on geometric measure theory}, vol.~3 of {\em
  Proceedings of the Centre for Mathematical Analysis, Australian National
  University}.
\newblock Australian National University, Centre for Mathematical Analysis,
  Canberra, 1983.

\bibitem{Sire2020}
{\sc Sire, Y., Terracini, S., and Tortone, G.}
\newblock On the nodal set of solutions to degenerate or singular elliptic
  equations with an application to s-harmonic functions.
\newblock {\em Journal des Mathematiques Pures et Appliquees\/} (2020).

\bibitem{Sire2019}
{\sc Sire, Y., Terracini, S., and Vita, S.}
\newblock Liouville type theorems and regularity of solutions to degenerate or
  singular problems part i: even solutions.
\newblock {\em Communications in Partial Differential Equations 46}, 2 (2021),
  310--361.

\bibitem{Soave2018}
{\sc Soave, N., and Terracini, S.}
\newblock The nodal set of solutions to some elliptic problems: sublinear
  equations, and unstable two-phase membrane problem.
\newblock {\em Adv. Math. 334\/} (2018), 243--299.

\bibitem{Soave2019}
{\sc Soave, N., and Terracini, S.}
\newblock The nodal set of solutions to some elliptic problems: singular
  nonlinearities.
\newblock {\em J. Math. Pures Appl. (9) 128\/} (2019), 264--296.

\bibitem{Spolaor2021}
{\sc Spolaor, L., and Velichkov, B.}
\newblock On the logarithmic epiperimetric inequality for the obstacle problem.
\newblock {\em Math. Eng. 3}, 1 (2021), Paper No. 4, 42.

\bibitem{Tan2011}
{\sc Tan, J., and Xiong, J.}
\newblock A {H}arnack inequality for fractional {L}aplace equations with lower
  order terms.
\newblock {\em Discrete Contin. Dyn. Syst. 31}, 3 (2011), 975--983.

\bibitem{Tao2008}
{\sc Tao, X., and Zhang, S.}
\newblock Weighted doubling properties and unique continuation theorems for the
  degenerate {S}chr\"{o}dinger equations with singular potentials.
\newblock {\em J. Math. Anal. Appl. 339}, 1 (2008), 70--84.

\bibitem{Tortone2020}
{\sc Tortone, G.}
\newblock {The nodal set of solutions to some nonlocal sublinear problems}.
\newblock {\em Preprint\/} (2020).

\bibitem{Wang2016}
{\sc Wang, K., and Wei, J.}
\newblock On the uniqueness of solutions of a nonlocal elliptic system.
\newblock {\em Math. Ann. 365}, 1-2 (2016), 105--153.

\bibitem{Whitney1934}
{\sc Whitney, H.}
\newblock Analytic extensions of differentiable functions defined in closed
  sets.
\newblock {\em Trans. Amer. Math. Soc. 36}, 1 (1934), 63--89.

\end{thebibliography}
\bibliographystyle{acm}

\end{document}